\newtheorem{theorem}{Theorem}
\newtheorem{proposition}[theorem]{Proposition}
\newtheorem{lemma}[theorem]{Lemma}
\theoremstyle{definition}
\newtheorem{remark}[theorem]{Remark}
\newtheorem*{question*}{Motivating Question}
\DeclareMathOperator{\GL}{GL}
\DeclareMathOperator{\Adj}{Ad}
\DeclareRobustCommand{\element}[1]{\@element#1\@nil}
\def\@element#1#2\@nil{%
	#1%
	\if\relax#2\relax\else\MakeLowercase{#2}\fi}
\title[Kostant--Toda lattices and the universal centralizer]{Kostant--Toda lattices and the universal centralizer}
\author[Peter Crooks]{Peter Crooks}
\address{Department of Mathematics, Northeastern University, 360 Huntington Ave., Boston, MA 02115, USA}
\email{~~~peter.d.crooks@gmail.com}
\subjclass[2010]{17B80 (primary); 20G20, 50D20 (secondary)}
\keywords{integrable system, Toda lattice, universal centralizer}
\begin{document}
	
	\maketitle

\vspace{10pt}

\begin{abstract}
To each complex semisimple Lie algebra $\mathfrak{g}$ decorated with appropriate data, one may associate two completely integrable systems. One is the well-studied Kostant--Toda lattice, while the second is an integrable system defined on the universal centralizer $\mathcal{Z}_{\mathfrak{g}}$ of $\mathfrak{g}$. These systems are similar in that each exploits and closely reflects the invariant theory of $\mathfrak{g}$, as developed by Chevalley, Kostant, and others. One also has Kostant's description of level sets in the Kostant--Toda lattice, which turns out to suggest deeper similarities between the two integrable systems in question. 

We study relationships between the two aforementioned integrable systems, partly to understand and contextualize the similarities mentioned above. Our main result is a canonical open embedding of a flow-invariant open dense subset of the Kostant--Toda lattice into $\mathcal{Z}_{\mathfrak{g}}$. Secondary results include some qualitative features of the integrable system on $\mathcal{Z}_{\mathfrak{g}}$.                         
\end{abstract}

\vspace{15pt}

\tableofcontents

\section{Introduction}

\subsection{Motivation and context}
The finite, non-periodic Toda lattice is a cornerstone of classical integrable systems theory \cite{Toda1,Toda2,Moser}, and it makes well-documented connections to representation theory \cite{AdlervanMoerbeke,Kim,KostantFlag,Givental,Rietsch}. These connections are perhaps best elucidated through Kostant's Lie-theoretic realization of the Toda lattice \cite{KostantSolution}, a completely integrable system $\overline{F}:\mathcal{O}_{\text{KT}}\rightarrow\mathbb{C}^r$ that one sometimes calls the \textit{Kostant--Toda lattice}. Kostant defines this system for each rank-$r$ complex semisimple Lie algebra $\mathfrak{g}$ decorated with certain Lie-theoretic data, and he gives a particularly elegant description of this system's level sets. His description involves the adjoint group $G$ of $\mathfrak{g}$, and is roughly stated as follows: each level set $\overline{F}^{-1}(z)$ admits an explicit open embedding into the $G$-stabilizer of a suitable regular element in $\mathfrak{g}$.

The preceding discussion features prominetly in \cite{AbeCrooks2}, where (among other things) Abe and the current author speculate about a possible role to be played by the \textit{universal centralizer} $\mathcal{Z}_{\mathfrak{g}}$ of $\mathfrak{g}$. This smooth symplectic variety has received some attention in representation-theoretic contexts \cite{BalibanuUniversal,GinzburgKostant}, and it turns out to come equipped with a completely integrable system $\widetilde{F}:\mathcal{Z}_{\mathfrak{g}}\rightarrow\mathbb{C}^r$. Each level set of the aforementioned system is canonically isomorphic to the $G$-stabilizer of an explicit regular element in $\mathfrak{g}$. Comparing this fact with Kostant's description of the level sets $\overline{F}^{-1}(z)$, we are prompted to ask the following question: to what extent are the integrable systems $\overline{F}:\mathcal{O}_{\text{KT}}\rightarrow\mathbb{C}^r$ and $\widetilde{F}:\mathcal{Z}_{\mathfrak{g}}\rightarrow\mathbb{C}^r$ related?

\subsection{Outline of results}\label{Subsection: Outline of results}
Our main contribution is an answer to the question posed above. We also establish some incidental facts about the integrable system $\widetilde{F}:\mathcal{Z}_{\mathfrak{g}}\rightarrow\mathbb{C}^r$, facts that we believe to be independently interesting and potentially useful in other contexts.  

To outline our work, we let $\mathfrak{g}$ and $G$ be exactly as described earlier and we fix the following data:
\begin{itemize}
\item[(i)] homogeneous, algebraically independent generators $f_1,\ldots,f_r$ of the invariant ring $\mathrm{Sym}(\mathfrak{g}^*)^G$;
\item[(ii)] a Cartan subalgebra $\mathfrak{t}\subseteq\mathfrak{g}$;
\item[(iii)] a choice of simple roots $\Delta\subseteq\mathfrak{t}^*$;
\item[(iv)] for each $\alpha\in\Delta$, choices of $e_{\alpha}\in\mathfrak{g}_{\alpha}$ and $e_{-\alpha}\in\mathfrak{g}_{-\alpha}$ that pair to $1$ under the Killing form, where $\mathfrak{g}_{\pm\alpha}\subseteq\mathfrak{g}$ is the root space associated to $\pm\alpha$;
\item[(v)] an $\mathfrak{sl}_2$-triple $(\xi,h,\eta)\in\mathfrak{g}^{\oplus 3}$, where $h\in\mathfrak{t}$ is defined by $\alpha(h)=-2$ for all $\alpha\in\Delta$, $\xi:=\sum_{\alpha\in\Delta}e_{-\alpha}$, and $\eta:=\sum_{\alpha\in\Delta}c_{\alpha}e_{\alpha}$ for suitable coefficients $c_{\alpha}\in\mathbb{C}^{\times}$.
\end{itemize} 
This $\mathfrak{sl}_2$-triple canonically determines a \textit{Slodowy slice} (a.k.a \textit{Kostant section}) $\mathcal{S}\subseteq\mathfrak{g}$, and the \textit{universal centralizer} is then the following smooth, $2r$-dimensional, closed subvariety of $G\times\mathcal{S}$:
\begin{equation}\label{Equation: Universal centralizer}\mathcal{Z}_{\mathfrak{g}}:=\{(g,x)\in G\times\mathcal{S}: g\in G_x\},\end{equation} where $G_x$ denotes the $G$-stabilizer of $x\in\mathcal{S}$ under the adjoint action. The universal centralizer has a canonical symplectic structure (Proposition \ref{Proposition: Symplectic subvariety}), in which context we consider the functions $\widetilde{f}_1,\ldots,\widetilde{f}_r:\mathcal{Z}_{\mathfrak{g}}\rightarrow\mathbb{C}$ defined by
$$\widetilde{f}_i(g,x)=f_i(x),\quad (g,x)\in\mathcal{Z}_{\mathfrak{g}}$$
for all $i\in\{1,\ldots,r\}$. In particular, we note that $\widetilde{F}:=(\widetilde{f}_1,\ldots,\widetilde{f}_r):\mathcal{Z}_{\mathfrak{g}}\rightarrow\mathbb{C}^r$ is a completely integrable system (Proposition \ref{Proposition: Integrable system}) with level sets satisfying $\widetilde{F}^{-1}(\widetilde{F}(g,x))\cong G_x$ for all $(g,x)\in\mathcal{Z}_{\mathfrak{g}}$ (Proposition \ref{Proposition: Moment map image}). This system turns out to be quite concrete, an idea that we emphasize through an explicit construction of \textit{Carath\'eodory--Jacobi--Lie coordinates} (see the end of Section \ref{Subsection: The integrable system}).   

Now consider the $2r$-dimensional locally closed subvariety
\begin{equation}\label{Equation: Kostant--Toda definition}\mathcal{O}_{\text{KT}}:=\xi+\mathfrak{t}+\sum_{\alpha\in\Delta}(\mathfrak{g}_{\alpha}\setminus\{0\}):=\bigg\{\xi+x+\sum_{\alpha\in\Delta}y_{\alpha}:x\in\mathfrak{t}\text{ and }y_{\alpha}\in\mathfrak{g}_{\alpha}\setminus\{0\}\text{ for all }\alpha\in\Delta\bigg\}\end{equation} of $\mathfrak{g}$, which is known to carry a distinguished symplectic form. Let $\overline{f}_1,\ldots,\overline{f}_r:\mathcal{O}_{\text{KT}}\rightarrow\mathbb{C}$ denote the restrictions of $f_1,\ldots,f_r$ to $\mathcal{O}_{\text{KT}}$, respectively. Kostant \cite{KostantSolution} proves $\overline{F}:=(\overline{f}_1,\ldots,\overline{f}_r):\mathcal{O}_{\text{KT}}\rightarrow\mathbb{C}^r$ defines a completely integrable system, hereafter called the Kostant--Toda lattice.

Our main result is the following relationship between the integrable systems $\overline{F}:\mathcal{O}_{\text{KT}}\rightarrow\mathbb{C}^r$ and $\widetilde{F}:\mathcal{Z}_{\mathfrak{g}}\rightarrow\mathbb{C}^r$.

\begin{theorem}\label{Theorem: Main theorem}
There exist a canonical open dense subset $\mathcal{V}\subseteq\mathcal{O}_{\emph{KT}}$ and a canonical open embedding of complex manifolds $\varphi:\mathcal{V}\rightarrow\mathcal{Z}_{\mathfrak{g}}$ satisfying the following conditions.
\begin{itemize}
\item[(i)] The open set $\mathcal{V}$ is a union of level sets of $\overline{F}$.  
\item[(ii)] We have a commutative diagram \begin{align*}
\xymatrixrowsep{4pc}\xymatrixcolsep{4pc}\xymatrix{
	\mathcal{V} \ar[rd]_{\overline{F}\big\vert_{\mathcal{V}}} \ar[rr]^{\varphi} & & \mathcal{Z}_{\mathfrak{g}} \ar[ld]^{\widetilde{F}} \\
	& \mathbb{C}^r & 
}.
\end{align*} 
\item[(iii)] For all $i\in\{1,\ldots,r\}$, $\varphi$ identifies the Hamiltonian vector field of $\overline{f}_i$ on $\mathcal{V}$ with the Hamiltonian vector field of $\widetilde{f}_i$ on the image of $\varphi$.
\end{itemize} 
\end{theorem}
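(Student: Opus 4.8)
The plan is to build $\varphi$ out of Kostant's explicit description of the level sets of $\overline{F}$, to read off (i) and (ii) from that construction, and then to concentrate on (iii), which I expect to be the genuine obstacle. For the setup, let $\mathfrak{b}:=\mathfrak{t}\oplus\bigoplus_{\alpha>0}\mathfrak{g}_{\alpha}$ be the Borel subalgebra determined by $\Delta$ and $N\subseteq G$ the corresponding maximal unipotent subgroup. Two standard facts are needed: every element of $\mathcal{O}_{\mathrm{KT}}$ is regular in $\mathfrak{g}$, and both $\mathcal{O}_{\mathrm{KT}}$ and $\mathcal{S}$ are contained in the affine subspace $\xi+\mathfrak{b}$, on which \cite{KostantSolution} supplies an isomorphism of varieties $N\times\mathcal{S}\xrightarrow{\ \sim\ }\xi+\mathfrak{b}$, $(n,x)\mapsto\Adj_{n}(x)$. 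Thus each $v\in\mathcal{O}_{\mathrm{KT}}$ has a unique expression $v=\Adj_{n_{v}}(x_{v})$ with $n_{v}\in N$ and $x_{v}\in\mathcal{S}$; since $x_{v}$ is $G$-conjugate to $v$, it is the unique point of $\mathcal{S}$ with $f_{i}(x_{v})=\overline{f}_{i}(v)$ for every $i$, so $v\mapsto x_{v}$ factors through $\overline{F}$.

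To produce the group coordinate I would invoke Kostant's level-set theorem in the following shape: there is a dense open subset $\mathcal{W}\subseteq\mathbb{C}^{r}$ --- I expect the regular semisimple locus, i.e. the complement of the discriminant --- so that for each $z\in\mathcal{W}$ the whole level set $\overline{F}^{-1}(z)$ carries a canonical open embedding $j_{z}$ into the $G$-stabilizer $G_{x_{z}}$ of the Slodowy representative $x_{z}\in\mathcal{S}$, which under Proposition \ref{Proposition: Moment map image} is identified with the fibre $\widetilde{F}^{-1}(z)$; concretely, using $N\times\mathcal{S}\xrightarrow{\sim}\xi+\mathfrak{b}$ the value $j_{z}(v)$ should be expressible through the unipotent factor $n_{v}$ of $v$ by a Gauss/big-cell factorization in $G$, and one checks along the way that Kostant's description is compatible with the particular slice $\mathcal{S}$ used to define $\mathcal{Z}_{\mathfrak{g}}$. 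Setting $\mathcal{V}:=\overline{F}^{-1}(\mathcal{W})$ and $\varphi(v):=(j_{\overline{F}(v)}(v),x_{v})\in\mathcal{Z}_{\mathfrak{g}}$, property (i) is immediate because $\mathcal{V}$ is an $\overline{F}$-preimage, and (ii) holds because $\widetilde{f}_{i}(\varphi(v))=f_{i}(x_{v})=\overline{f}_{i}(v)$.

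That $\varphi$ is an open embedding of complex manifolds would follow from dimension counting together with injectivity: $\mathcal{V}$ and $\mathcal{Z}_{\mathfrak{g}}$ are both smooth of dimension $2r$, and $\varphi$ is holomorphic, so it suffices to see that $\varphi$ is injective; this is true within each level set because $j_{z}$ is, and across level sets because the coordinate $x_{v}$ is determined by $\overline{F}(v)$ and separates distinct level sets. Density of $\mathcal{V}$ in $\mathcal{O}_{\mathrm{KT}}$ then follows since $\mathcal{O}_{\mathrm{KT}}\cong\mathfrak{t}\times\prod_{\alpha\in\Delta}(\mathfrak{g}_{\alpha}\setminus\{0\})$ is irreducible and $\overline{F}$ is dominant, so that the preimage of the dense open $\mathcal{W}$ is dense.

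It remains to establish (iii), and this is where I expect the real work to lie. By (ii) it is enough to show that $\varphi$ is a symplectomorphism onto its image, for then $\varphi^{*}\widetilde{f}_{i}=\overline{f}_{i}$ forces $\varphi$ to carry the Hamiltonian vector field $X_{\overline{f}_{i}}$ to the restriction of $X_{\widetilde{f}_{i}}$ to $\varphi(\mathcal{V})$. The difficulty is that the Kostant--Toda symplectic form on $\mathcal{O}_{\mathrm{KT}}$ and the symplectic form on $\mathcal{Z}_{\mathfrak{g}}$ furnished by Proposition \ref{Proposition: Symplectic subvariety} are produced by genuinely different mechanisms, so their agreement under $\varphi$ has to be earned. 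One route is direct: pull back the $\mathcal{Z}_{\mathfrak{g}}$-form along $\varphi$ and match it with the Kostant--Toda form by a Lie-algebra computation tracking the changes of coordinates $v=\Adj_{n_{v}}(x_{v})$ and $n_{v}\mapsto j_{\overline{F}(v)}(v)$. A second, more structural route is dynamical: Kostant integrates the Toda flows, while on $\mathcal{Z}_{\mathfrak{g}}$ the flow of $\widetilde{f}_{i}$ through $(g,x)$ is $t\mapsto(g\exp(t\,\nabla f_{i}(x)),x)$, that is, translation by the Killing-form gradient $\nabla f_{i}(x)\in\Lie(G_{x})$ along the centralizer fibre; since $\varphi$ sends $\overline{F}^{-1}(z)\cap\mathcal{V}$ into that fibre, one checks that it carries the Toda flow of $\overline{f}_{i}$ onto this translation flow, and then upgrades the intertwining of all $r$ flows to a symplectomorphism by comparing the Carath\'eodory--Jacobi--Lie coordinates on $\mathcal{Z}_{\mathfrak{g}}$ built at the end of Section \ref{Subsection: The integrable system} with their Kostant--Toda counterparts. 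Either way, reconciling the two symplectic structures is the heart of the matter; the remainder is bookkeeping on top of Kostant's level-set theorem.
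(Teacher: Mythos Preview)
Your overall architecture matches the paper's: define $\mathcal{V}$ as an $\overline{F}$-preimage of a dense open set in $\mathbb{C}^r$, build $\varphi$ fibrewise from Kostant's level-set isomorphism, and read off (i) and (ii). Two technical points you wave past are handled carefully in the paper. First, Kostant's isomorphism $\tau_y$ is indexed by $y\in\xi+\mathfrak{t}$, not by the Slodowy representative $x_v\in\mathcal{S}$; to get a \emph{holomorphic} choice of $y$ over the base one cannot use the full regular semisimple locus (Weyl monodromy obstructs a single-valued section), so the paper takes $\mathcal{W}=F(\mathcal{C})$ for the open chamber $\mathcal{C}=\{x\in\mathfrak{t}:\mathrm{Re}\,\alpha(x)>0\ \forall\alpha\in\Delta\}$ and produces a holomorphic $\theta:\mathcal{V}\to\xi+\mathcal{C}$. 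Second, your ``compatibility with $\mathcal{S}$'' is not free: $\tau_{\theta(x)}$ lands in $G_{\theta(x)}$, and the paper passes to $G_{\beta(x)}\subseteq\mathcal{Z}_{\mathfrak{g}}$ by conjugating with an explicit $\delta(x)\in N$ satisfying $\Adj_{\delta(x)}\theta(x)=\beta(x)$, setting $\varphi(x)=(\delta(x)\lambda(x)\delta(x)^{-1},\beta(x))$ with $\lambda(x)=\tau_{\theta(x)}^{-1}(x)$.

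The real discrepancy is in (iii). You frame it as ``show $\varphi$ is a symplectomorphism, then (iii) follows,'' and propose to reach the symplectomorphism by first intertwining the flows and then ``upgrading'' via Carath\'eodory--Jacobi--Lie coordinates. But the flow-intertwining step \emph{is} (iii); there is nothing further to prove. The paper establishes (iii) directly: Kostant's theorem identifies $H(\overline{f}_i)$ on $\overline{F}^{-1}(\overline{F}(x))$ with the left-invariant field $(d_{\theta(x)}f_i)^{\vee}$ on $G_{\theta(x)}^*$ via $\tau_{\theta(x)}$, the paper's Proposition~\ref{Proposition: Description of Hamiltonian vector fields} identifies $H(\widetilde{f}_i)$ with the left-invariant field $(d_{\beta(x)}f_i)^{\vee}$ on $G_{\beta(x)}$, and conjugation by $\delta(x)$ (together with $\Adj_{\delta(x)}(d_{\theta(x)}f_i)^{\vee}=(d_{\beta(x)}f_i)^{\vee}$) matches them. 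The paper never claims $\varphi$ is symplectic, and your upgrade would not be automatic anyway: conditions (ii) and (iii) only pin down $\varphi^*\omega_{\mathcal{Z}_{\mathfrak{g}}}$ modulo a $2$-form pulled back from $\mathbb{C}^r$ along $\overline{F}$, so an additional argument would be required. Drop the symplectomorphism detour and argue (iii) by the direct vector-field comparison you already sketched as the first half of Route 2.
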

 
\subsection{Organization}
Section \ref{Section: Notation and conventions} briefly describes some of the notation and conventions adopted throughout this paper. The main mathematical content then begins in Section \ref{Section: The universal centralizer}, which concerns the universal centralizer and its completely integrable system. Section \ref{Section: The Kostant--Toda lattice} subsequently studies aspects of the Kostant--Toda lattice, especially those aspects pertinent to Theorem \ref{Theorem: Main theorem}. A proof of Theorem \ref{Theorem: Main theorem} is then given in Section \ref{Section: The relationship}.  

\subsection*{Acknowledgements}
The author wishes to thank Ana B{\u{a}}libanu for enlightening conversations. He also gratefully acknowledges support from the Natural Sciences and Engineering Research Council of Canada [PDF--516638--2018].  

\section{Notation and conventions}\label{Section: Notation and conventions}
This paper works exclusively over $\mathbb{C}$, viewing it as the base field of anything whose definition presupposes a base field (e.g. a variety, manifold, Lie algebra, etc.). We deal extensively with affine varieties, using $\mathbb{C}[X]$ to denote the coordinate ring of any affine variety $X$. Furthermore, we always understand ``group action'' as meaning ``left group action''.

We make many statements concerning the openness, closedness, closure, and denseness of sets. Whenever any such statement is ambiguous about whether it uses the algebro-geometric / Zariski topology versus the Euclidean / complex-analytic topology, the latter topology is being used implicitly.               

Throughout this paper, $\mathfrak{g}$ is a finite-dimensional, semisimple, rank-$r$ Lie algebra with adjoint group $G$ and associated exponential map $\exp:\mathfrak{g}\rightarrow G$. The second paragraph of \ref{Subsection: Outline of results} chooses the additional Lie-theoretic data (i)--(v), and we now regard these as fixed for the duration of the paper. We also have an adjoint representation $\Adj:G\rightarrow\GL(\mathfrak{g})$, $g\mapsto \Adj_g$, through which $G$ acts on $\mathfrak{g}$. Denote by $G_x\subseteq G$ (resp. $Gx\subseteq \mathfrak{g}$) the $G$-stabilizer (resp. $G$-orbit) of any $x\in\mathfrak{g}$, noting that the Lie algebra of $G_x$ is precisely $\mathfrak{g}_x:=\{y\in\mathfrak{g}:[x,y]=0\}$. 

Let $\langle\cdot,\cdot\rangle:\mathfrak{g}\otimes_{\mathbb{C}}\mathfrak{g}\rightarrow\mathbb{C}$ be the Killing form, which one knows to be non-degenerate and $G$-invariant. It follows that
\begin{equation}\label{Equation: Killing isomorphism}\mathfrak{g}\rightarrow\mathfrak{g}^*,\quad x\mapsto\langle x,\cdot\rangle,\quad x\in\mathfrak{g}\end{equation} defines an isomorphism between the adjoint representation and its dual. This isomorphism allows us to understand the moment maps of Hamiltonian $G$-actions as being $\mathfrak{g}$-valued, and it allows us to transfer the canonical Poisson structure on $\mathfrak{g}^*$ to $\mathfrak{g}$. Note that $\mathbb{C}[\mathfrak{g}]$ thereby becomes a Poisson algebra with Poisson centre $\mathbb{C}[\mathfrak{g}]^G$, the algebra of $G$-invariant regular functions on $\mathfrak{g}$. 

\section{The universal centralizer}\label{Section: The universal centralizer}
We now formalize our introductory discussion of the universal centralizer and its properties. This begins in Section \ref{Subsection: Symplectic structure}, which realizes the universal centralizer as a symplectic subvariety of $T^*G$ (Proposition \ref{Proposition: Symplectic subvariety}). Section \ref{Subsection: The integrable system} is devoted to the integrable system on the universal centralizer, and includes results on level sets (Proposition \ref{Proposition: Moment map image}), Hamiltonian vector fields (Proposition \ref{Proposition: Description of Hamiltonian vector fields}), and Carath\'eodory--Jacobi--Lie coordinates (Proposition \ref{Proposition: Surjective local biholomorphism}, Proposition \ref{Proposition: Symplectic forms}, and the last paragraph of \ref{Subsection: The integrable system}).  

\subsection{Symplectic structure}\label{Subsection: Symplectic structure}
Note that the left trivialization of $T^*G$ and the isomorphism \eqref{Equation: Killing isomorphism} determine isomorphisms $$T^*G\cong G\times\mathfrak{g}^*\cong G\times\mathfrak{g}$$ of vector bundles over $G$. The canonical symplectic form on $T^*G$ thereby corresponds to a symplectic form $\Omega$ on $G\times\mathfrak{g}$, defined as follows on each tangent space $T_{(g,x)}(G\times\mathfrak{g})=T_gG\oplus\mathfrak{g}$:
\begin{equation}\label{Equation: Symplectic form on cotangent bundle}\Omega_{(g,x)}\bigg((d_eL_g(y_1),z_1),(d_eL_g(y_2),z_2)\bigg)=\langle y_1,z_2\rangle-\langle y_2,z_1\rangle+\langle x,[y_1,y_2]\rangle,\quad y_1,y_2,z_1,z_2\in\mathfrak{g},\end{equation}
where $L_g:G\rightarrow G$ is left translation by $g$ and $d_eL_g:\mathfrak{g}\rightarrow T_gG$ is the differential of $L_g$ at the identity $e\in G$ (cf. \cite[Section 5, Equation (14L)]{Marsden}).
It is then not difficult to verify that
\begin{subequations}
\begin{align}& h\cdot (g,x):=(hg,x), \quad h\in G,\text{ }(g,x)\in G\times\mathfrak{g}\label{Equation: Left action on first factor},\\
& h\cdot (g,x):=(gh^{-1},\Adj_h(x)), \quad h\in G,\text{ }(g,x)\in G\times\mathfrak{g} \label{Equation: Analogue of right cotangent lift}
\end{align}
\end{subequations}
define commuting Hamiltonian actions of $G$ on $G\times\mathfrak{g}$, and that
\begin{subequations}
\begin{align}
        & \mu_L:G\times\mathfrak{g}\rightarrow\mathfrak{g}, \quad (g,x)\mapsto\Adj_g(x),\quad (g,x)\in G\times\mathfrak{g}\label{Equation: Left moment map},\\
        & \mu_R:G\times\mathfrak{g}\rightarrow\mathfrak{g}, \quad (g,x)\mapsto -x,\quad (g,x)\in G\times\mathfrak{g}\label{Equation: Right moment map}
\end{align}
\end{subequations}
are moment maps for \eqref{Equation: Left action on first factor} and \eqref{Equation: Analogue of right cotangent lift}, respectively.  It follows that 
$$(h_1,h_2)\cdot (g,x):=(h_1gh_2^{-1},\Adj_{h_2}(x)), \quad (h_1,h_2)\in G\times G,\text{ }(g,x)\in G\times\mathfrak{g}$$
defines a Hamiltonian action of $G\times G$ on $G\times\mathfrak{g}$, for which
\begin{equation}\label{Equation: Product moment map}\mu:=(\mu_L,\mu_R):G\times\mathfrak{g}\rightarrow\mathfrak{g}\oplus\mathfrak{g},\quad (g,x)\mapsto(\Adj_g(x),-x),\quad (g,x)\in G\times\mathfrak{g}\end{equation} is necessarily a moment map.    

Now recall the $\mathfrak{sl}_2$-triple $(\xi,h,\eta)$ fixed in \ref{Subsection: Outline of results}, noting that the associated Slodowy slice is 
$$\mathcal{S}:=\xi+\mathfrak{g}_{\eta}\subseteq\mathfrak{g}.$$ This slice is fruitfully studied in relation to $\mathfrak{g}_{\text{reg}}$, the open, dense, $G$-invariant subvariety of $\mathfrak{g}$ defined by 
$$\mathfrak{g}_{\text{reg}}:=\{x\in\mathfrak{g}:\dim(\mathfrak{g}_x)=r\}.$$ One knows that $x\in\mathfrak{g}$ lies in $\mathfrak{g}_{\text{reg}}$ if and only if $x$ is $G$-conjugate to a vector in $\mathcal{S}$. In this case, $x$ is $G$-conjugate to a unique vector in $\mathcal{S}$ (see \cite[Theorem 8]{KostantLie}).  

Now let the universal centralizer $\mathcal{Z}_{\mathfrak{g}}$ be as defined in \eqref{Equation: Universal centralizer}, observing that $\mathcal{Z}_{\mathfrak{g}}$ is a closed subvariety of $G\times\mathfrak{g}$. Using the symplectic form \eqref{Equation: Symplectic form on cotangent bundle}, one can strengthen this observation as follows.   

\begin{proposition}\label{Proposition: Symplectic subvariety}
$\mathcal{Z}_{\mathfrak{g}}$ is a $2r$-dimensional symplectic subvariety of $G\times\mathfrak{g}$.
\end{proposition}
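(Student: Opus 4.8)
The plan is to realize $\mathcal{Z}_{\mathfrak{g}}$ as a moment-map preimage, extract smoothness and the dimension count from a transversality statement, and then verify nondegeneracy of $\Omega\vert_{\mathcal{Z}_{\mathfrak{g}}}$ by a brief tangent-space computation; closedness of $\Omega\vert_{\mathcal{Z}_{\mathfrak{g}}}$ costs nothing, being the restriction of the closed form $\Omega$ of \eqref{Equation: Symplectic form on cotangent bundle}. First I would record the identity $\mathcal{Z}_{\mathfrak{g}}=\mu^{-1}\big(\mathcal{S}\times(-\mathcal{S})\big)$ for the moment map $\mu$ of \eqref{Equation: Product moment map}: if $\mu(g,x)=(\Adj_g(x),-x)\in\mathcal{S}\times(-\mathcal{S})$ then $x\in\mathcal{S}$ and $\Adj_g(x)\in\mathcal{S}$, so since $\mathcal{S}\subseteq\mathfrak{g}_{\text{reg}}$ and $\Adj_g(x)$ is $G$-conjugate to $x$, the fact that a regular adjoint orbit meets $\mathcal{S}$ in a single point (\cite[Theorem 8]{KostantLie}) forces $\Adj_g(x)=x$, i.e.\ $(g,x)\in\mathcal{Z}_{\mathfrak{g}}$; the reverse inclusion is clear.

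Next I would check that $\mu$ is transverse to $\mathcal{S}\times(-\mathcal{S})$ along $\mathcal{Z}_{\mathfrak{g}}$. In the left trivialization, $d\mu_{(g,x)}\big(d_eL_g(y),z\big)=\big(\Adj_g([y,x]+z),-z\big)$; since $g\in G_x$ the operator $\Adj_g$ commutes with $[\,\cdot\,,x]$ and so preserves $[\mathfrak{g},x]$, whence $\im\big(d\mu_{(g,x)}\big)$ contains $[\mathfrak{g},x]\oplus 0$ and all vectors $(\Adj_g(z),-z)$. Using $T_{(x,-x)}\big(\mathcal{S}\times(-\mathcal{S})\big)=\mathfrak{g}_{\eta}\oplus\mathfrak{g}_{\eta}$ together with the Kostant identity $[\mathfrak{g},x]+\mathfrak{g}_{\eta}=\mathfrak{g}$, valid for every $x\in\mathcal{S}$ (transversality of $\mathcal{S}$ to regular adjoint orbits), one gets $\im\big(d\mu_{(g,x)}\big)+\big(\mathfrak{g}_{\eta}\oplus\mathfrak{g}_{\eta}\big)=\mathfrak{g}\oplus\mathfrak{g}$. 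Transversality then makes $\mathcal{Z}_{\mathfrak{g}}$ smooth, and since $\mathcal{S}\times(-\mathcal{S})$ has codimension $2(\dim\mathfrak{g}-r)$ in $\mathfrak{g}\oplus\mathfrak{g}$, it has dimension $2\dim\mathfrak{g}-2(\dim\mathfrak{g}-r)=2r$. (One could instead invoke smoothness of the regular centralizer group scheme over $\mathfrak{g}_{\text{reg}}$ and restrict it along $\mathcal{S}\hookrightarrow\mathfrak{g}_{\text{reg}}$.)

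Finally, to see that $\Omega\vert_{\mathcal{Z}_{\mathfrak{g}}}$ is nondegenerate, fix $(g,x)\in\mathcal{Z}_{\mathfrak{g}}$, and let $W:=T_{(g,x)}\mathcal{Z}_{\mathfrak{g}}=\{(d_eL_g(y),z):z\in\mathfrak{g}_{\eta},\ \Adj_g([y,x]+z)=z\}$ and $L:=T_{(g,x)}G_x=\{(d_eL_g(w),0):w\in\mathfrak{g}_x\}\subseteq W$, so that $\dim W=2r$ and $\dim L=r$. Formula \eqref{Equation: Symplectic form on cotangent bundle} and invariance of $\langle\cdot,\cdot\rangle$ show $L$ is isotropic, since $\Omega\big((d_eL_g(w_1),0),(d_eL_g(w_2),0)\big)=\langle x,[w_1,w_2]\rangle=\langle[x,w_1],w_2\rangle=0$ for $w_1,w_2\in\mathfrak{g}_x$. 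As $L$ is isotropic of half the dimension of $W$, $\Omega\vert_W$ is nondegenerate exactly when the induced pairing $L\times(W/L)\to\mathbb{C}$ is perfect, so it is enough to understand that pairing. The projection $(d_eL_g(y),z)\mapsto z$ has kernel precisely $L$ (because $\Adj_g([y,x])=0\iff[y,x]=0$), and hence identifies $W/L$ with a subspace of $\mathfrak{g}_{\eta}$ of dimension $r=\dim\mathfrak{g}_{\eta}$, i.e.\ with all of $\mathfrak{g}_{\eta}$; moreover, for $w\in\mathfrak{g}_x$ and any representative $(d_eL_g(u),z)\in W$ one computes $\Omega\big((d_eL_g(w),0),(d_eL_g(u),z)\big)=\langle w,z\rangle+\langle[x,w],u\rangle=\langle w,z\rangle$, using $[x,w]=0$. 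Thus, under the identifications $L\cong\mathfrak{g}_x$ and $W/L\cong\mathfrak{g}_{\eta}$, the pairing is nothing but the Killing form restricted to $\mathfrak{g}_x\times\mathfrak{g}_{\eta}$; this is perfect because $[\mathfrak{g},x]\cap\mathfrak{g}_{\eta}=0$ and $\mathfrak{g}_x\cap\mathfrak{g}_{\eta}^{\perp}=\big([\mathfrak{g},x]+\mathfrak{g}_{\eta}\big)^{\perp}=0$, once more by the Kostant transversality identity. Hence $\Omega\vert_{\mathcal{Z}_{\mathfrak{g}}}$ is a symplectic form.

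The step I expect to be most delicate is the last one: the point is to sidestep a direct diagonalization of $\Omega$ on $W$ by noticing that the centralizer fibre $G_x$ sits inside $\mathcal{Z}_{\mathfrak{g}}$ as a Lagrangian, so that nondegeneracy is forced by the single classical splitting $\mathfrak{g}=[\mathfrak{g},x]\oplus\mathfrak{g}_{\eta}$ attached to the Kostant section. Recognizing that this splitting is precisely the rigidity needed — and keeping the left-trivialization conventions for $\Omega$, $\mu$ and $d_eL_g$ mutually consistent throughout — is where care is required; the transversality in the previous step reduces to the very same identity and should be routine.
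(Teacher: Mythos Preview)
Your argument is correct. Both you and the paper begin from the same identity $\mathcal{Z}_{\mathfrak{g}}=\mu^{-1}\big(\mathcal{S}\times(-\mathcal{S})\big)$, with essentially the same verification. From there the approaches diverge: the paper observes that $\big((\xi,-\xi),(h,h),(\eta,-\eta)\big)$ is an $\mathfrak{sl}_2$-triple in $\mathfrak{g}\oplus\mathfrak{g}$ whose Slodowy slice is exactly $\mathcal{S}\times(-\mathcal{S})$, and then invokes an external result (\cite[Proposition~6]{CrooksVP}) asserting that the moment-map preimage of a Slodowy slice is a smooth symplectic subvariety; the dimension is obtained separately via the fibration $\pi:\mathcal{Z}_{\mathfrak{g}}\to\mathcal{S}$ with $r$-dimensional fibres over an $r$-dimensional base. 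You instead carry out by hand what that cited proposition packages: you check transversality of $\mu$ to $\mathcal{S}\times(-\mathcal{S})$ directly (which simultaneously yields smoothness and the dimension), and you verify nondegeneracy of $\Omega\vert_{\mathcal{Z}_{\mathfrak{g}}}$ via the Lagrangian fibre $G_x\times\{x\}$ and the Killing pairing $\mathfrak{g}_x\times\mathfrak{g}_\eta\to\mathbb{C}$. Both routes ultimately rest on the same Kostant splitting $\mathfrak{g}=[\mathfrak{g},x]\oplus\mathfrak{g}_\eta$; the paper's version is shorter and more conceptual (and situates $\mathcal{Z}_{\mathfrak{g}}$ within the general theory of hyperk\"ahler slices), while yours is self-contained and makes the mechanism visible. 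One small remark: your description of $T_{(g,x)}\mathcal{Z}_{\mathfrak{g}}$ via the constraint $\Adj_g([y,x]+z)=z$ differs in form from the moment-map description $\Adj_g([y,x]+z)\in\mathfrak{g}_\eta$ coming out of transversality, but the two agree once smoothness is known, so there is no gap.
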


\begin{proof}
Consider the surjective map \begin{equation}\label{Equation: Surjective map}\pi:\mathcal{Z}_{\mathfrak{g}}\rightarrow\mathcal{S},\quad (g,x)\mapsto x,\quad (g,x)\in\mathcal{Z}_{\mathfrak{g}},\end{equation} noting that $\pi^{-1}(x)\cong G_x$ for all $x\in\mathcal{S}$. We thus have $$\dim(\pi^{-1}(x))=\dim(G_x)=\dim(\mathfrak{g}_x)=r$$ for all $x\in\mathcal{S}$, where the last equality is based on the fact that $x\in\mathcal{S}\subseteq\mathfrak{g}_{\text{reg}}$. Note also that $\dim(\mathcal{S})=\dim(\mathfrak{g}_{\eta})=r$, as one knows that $\eta\in\mathfrak{g}_{\text{reg}}$ (see \cite[Theorem 5.3]{Kostant}). Taken together, the previous two sentences imply that $\dim(\mathcal{Z}_{\mathfrak{g}})=2r$.

To prove that $\mathcal{Z}_{\mathfrak{g}}$ is a symplectic subvariety of $G\times\mathfrak{g}$, we first note that $((\xi,-\xi),(h,h),(\eta,-\eta))$ is an $\mathfrak{sl}_2$-triple in $\mathfrak{g}\oplus\mathfrak{g}$. The Slodowy slice associated to this triple is easily seen to be $\mathcal{S}\times(-\mathcal{S})\subseteq\mathfrak{g}\oplus\mathfrak{g}$, where $-\mathcal{S}:=\{-x:x\in\mathcal{S}\}\subseteq\mathfrak{g}$. An application of \cite[Proposition 6]{CrooksVP} then shows $\mu^{-1}(\mathcal{S}\times(-\mathcal{S}))$ to be a (smooth) symplectic subvariety of $G\times\mathfrak{g}$, reducing us to proving $\mathcal{Z}_{\mathfrak{g}}=\mu^{-1}(\mathcal{S}\times(-\mathcal{S}))$.

Suppose that $(g,x)\in\mathcal{Z}_{\mathfrak{g}}$, so that $x\in\mathcal{S}$ and $\Adj_g(x)=x$. The formula \eqref{Equation: Product moment map} then gives $$\mu(g,x)=(\Adj_g(x),-x)=(x,-x)\in\mathcal{S}\times(-\mathcal{S}),$$ establishing the inclusion $\mathcal{Z}_{\mathfrak{g}}\subseteq\mu^{-1}(S\times(-\mathcal{S}))$. Now assume that $(g,x)\in\mu^{-1}(\mathcal{S}\times(-\mathcal{S}))$, i.e. $\Adj_g(x)\in\mathcal{S}$ and $x\in\mathcal{S}$ (see \eqref{Equation: Product moment map}). By the discussion of $\mathcal{S}$ preceding this proposition, we must have $\Adj_g(x)=x$. It follows that $(g,x)\in\mathcal{Z}_{\mathfrak{g}}$, yielding the inclusion $\mu^{-1}(\mathcal{S}\times(-\mathcal{S}))\subseteq\mathcal{Z}_{\mathfrak{g}}$. This completes the proof.
\end{proof}

\begin{remark}
The variety $G\times\mathfrak{g}$ is hyperk\"ahler (see \cite{Kronheimer}), in which context the equation $\mathcal{Z}_{\mathfrak{g}}=\mu^{-1}(\mathcal{S}\times(-\mathcal{S}))$ realizes $\mathcal{Z}_{\mathfrak{g}}$ as a \textit{hyperk\"ahler slice} in $G\times\mathfrak{g}$. The study of hyperk\"ahler slices originates in Bielawski's work \cite{Bielawski}, and we refer the reader to \cite{BielawskiComplex} and \cite[Section 4.2]{CrooksVP} for further details. 
\end{remark}

\subsection{The integrable system}\label{Subsection: The integrable system}
Let $f_1,\ldots,f_r$ be the homogeneous, algebraically independent generators of $\mathbb{C}[\mathfrak{g}]^G$ fixed in \ref{Subsection: Outline of results}, and define $F:\mathfrak{g}\rightarrow\mathbb{C}^r$ by $F=(f_1,\ldots,f_r)$. The restriction of $F$ to $\mathcal{S}$ is then a variety isomorphism (see \cite[Theorem 7]{KostantLie}), to be denoted 
\begin{equation}\label{Equation: Variety isomorphism} F\big\vert_{\mathcal{S}}:\mathcal{S}\xrightarrow{\cong}\mathbb{C}^r,\quad x\mapsto (f_1(x),\ldots,f_r(x)),\quad x\in\mathcal{S}.\end{equation} Let us also define $\widetilde{f}_1,\ldots,\widetilde{f}_r\in\mathbb{C}[\mathcal{Z_{\mathfrak{g}}}]$ by
\begin{equation}\label{Equation: Definition of new f_i}\widetilde{f}_i(g,x)=f_i(x),\quad (g,x)\in\mathcal{Z}_{\mathfrak{g}},\quad i=1,\ldots,r.\end{equation}

\begin{proposition}\label{Proposition: Integrable system}
The functions $\widetilde{f}_1,\ldots,\widetilde{f}_r$ form a completely integrable system on $\mathcal{Z}_{\mathfrak{g}}$.
\end{proposition}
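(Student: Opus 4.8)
The plan is to verify the two defining properties of a completely integrable system on the $2r$-dimensional symplectic variety $\mathcal{Z}_{\mathfrak{g}}$ (Proposition~\ref{Proposition: Symplectic subvariety}): that $\widetilde{f}_1,\dots,\widetilde{f}_r$ are functionally independent, and that they pairwise Poisson-commute. Functional independence is the routine point. With $\pi:\mathcal{Z}_{\mathfrak{g}}\to\mathcal{S}$ the projection \eqref{Equation: Surjective map}, one has $\widetilde{F}=(F\big\vert_{\mathcal{S}})\circ\pi$, so that the image of $\widetilde{F}$ equals $(F\big\vert_{\mathcal{S}})(\mathcal{S})=\mathbb{C}^r$ by \eqref{Equation: Variety isomorphism}; hence $\widetilde{F}$ is dominant, which is precisely to say that $\widetilde{f}_1,\dots,\widetilde{f}_r$ are algebraically independent, equivalently that $d\widetilde{f}_1,\dots,d\widetilde{f}_r$ are linearly independent on a dense open subset of $\mathcal{Z}_{\mathfrak{g}}$.

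The substantive point is involutivity. The difficulty is that $\mathcal{Z}_{\mathfrak{g}}$ is a symplectic \emph{submanifold} of $G\times\mathfrak{g}$, so its Poisson bracket is not the restriction of the bracket on $G\times\mathfrak{g}$, and one cannot simply invoke that the $f_i$ lie in the Poisson centre $\mathbb{C}[\mathfrak{g}]^G$. I would circumvent this by exhibiting, for each $i$, a function on $G\times\mathfrak{g}$ that restricts to $\widetilde{f}_i$ and whose Hamiltonian vector field is tangent to $\mathcal{Z}_{\mathfrak{g}}$: namely $\phi_i:=f_i\circ\mathrm{pr}_2$, where $\mathrm{pr}_2:G\times\mathfrak{g}\to\mathfrak{g}$ is the projection (equivalently $\phi_i=f_i\circ\mu_L$ by $G$-invariance of $f_i$ and \eqref{Equation: Left moment map}), so that $\phi_i\big\vert_{\mathcal{Z}_{\mathfrak{g}}}=\widetilde{f}_i$. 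Representing $(df_i)_x$ by a vector $\nabla f_i(x)\in\mathfrak{g}$ via \eqref{Equation: Killing isomorphism}, the $G$-invariance of $f_i$ is equivalent to $[x,\nabla f_i(x)]=0$, i.e. $\nabla f_i(x)\in\mathfrak{g}_x$. Substituting into the explicit formula \eqref{Equation: Symplectic form on cotangent bundle} and solving for the Hamiltonian vector field, one finds $X_{\phi_i}(g,x)=(d_eL_g(\nabla f_i(x)),0)$; the vanishing of the $\mathfrak{g}$-component is exactly where $\nabla f_i(x)\in\mathfrak{g}_x$ enters. The flow of $X_{\phi_i}$ through $(g,x)$ is then $t\mapsto(g\exp(t\nabla f_i(x)),x)$, which stays in $\mathcal{Z}_{\mathfrak{g}}$ whenever $(g,x)\in\mathcal{Z}_{\mathfrak{g}}$, since $\exp(t\nabla f_i(x))\in G_x$, $g\in G_x$, and $x\in\mathcal{S}$; thus $X_{\phi_i}$ is tangent to $\mathcal{Z}_{\mathfrak{g}}$. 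Tangency forces $X_{\phi_i}\big\vert_{\mathcal{Z}_{\mathfrak{g}}}$ to be the Hamiltonian vector field of $\widetilde{f}_i$ for the induced symplectic form, whence
\[
\{\widetilde{f}_i,\widetilde{f}_j\}_{\mathcal{Z}_{\mathfrak{g}}}=\bigl(X_{\phi_i}(\phi_j)\bigr)\big\vert_{\mathcal{Z}_{\mathfrak{g}}},
\]
and $X_{\phi_i}(\phi_j)(g,x)=(d\phi_j)_{(g,x)}\bigl(d_eL_g(\nabla f_i(x)),0\bigr)=0$ because $\phi_j$ depends only on the $\mathfrak{g}$-factor while $X_{\phi_i}$ has vanishing $\mathfrak{g}$-component. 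Hence $\{\widetilde{f}_i,\widetilde{f}_j\}_{\mathcal{Z}_{\mathfrak{g}}}=0$ for all $i,j$, and together with the first paragraph this exhibits $\widetilde{f}_1,\dots,\widetilde{f}_r$ as a completely integrable system.

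The one genuinely non-formal step is the tangency of the ambient Hamiltonian vector fields $X_{\phi_i}$ to $\mathcal{Z}_{\mathfrak{g}}$: this is what transports the involutivity from $G\times\mathfrak{g}$ down to $\mathcal{Z}_{\mathfrak{g}}$, and it rests entirely on the $G$-invariance of the $f_i$ (through $\nabla f_i(x)\in\mathfrak{g}_x$), failing for a general slice function. Everything else is bookkeeping with the formula \eqref{Equation: Symplectic form on cotangent bundle} and the dimension count already recorded in Proposition~\ref{Proposition: Symplectic subvariety}.
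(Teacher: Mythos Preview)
Your proof is correct, and for involutivity it takes a genuinely different route from the paper's. The paper argues abstractly: since $f_1,\dots,f_r$ lie in the Poisson centre of $\mathbb{C}[\mathfrak{g}]$ and $\mu_L$ is a Poisson map, the pullbacks $\mu_L^*(f_i)$ Poisson-commute on $G\times\mathfrak{g}$; it then asserts that, because $\mathcal{Z}_{\mathfrak{g}}$ is a symplectic subvariety, the restrictions Poisson-commute on $\mathcal{Z}_{\mathfrak{g}}$. You correctly flag that this last step is not automatic for an arbitrary symplectic submanifold, and you supply exactly the missing ingredient: you compute $X_{\phi_i}(g,x)=(d_eL_g(\nabla f_i(x)),0)$ directly from \eqref{Equation: Symplectic form on cotangent bundle}, use $\nabla f_i(x)\in\mathfrak{g}_x$ to see the $\mathfrak{g}$-component vanishes, and deduce that the flow $t\mapsto(g\exp(t\nabla f_i(x)),x)$ stays in $\mathcal{Z}_{\mathfrak{g}}$. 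Tangency of $X_{\phi_i}$ to $\mathcal{Z}_{\mathfrak{g}}$ is precisely what forces the intrinsic and ambient Poisson brackets to agree on these functions, so your argument is both self-contained and more transparent on this point. As a bonus, your computation of $X_{\phi_i}$ is essentially the content of Proposition~\ref{Proposition: Description of Hamiltonian vector fields}, which the paper proves separately afterwards.

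For functional independence the two arguments are close in spirit. The paper observes that $\pi$ admits the section $x\mapsto(e,x)$, hence is a submersion, and combines this with \eqref{Equation: Variety isomorphism} to get linear independence of $d\widetilde f_1,\dots,d\widetilde f_r$ at \emph{every} point of $\mathcal{Z}_{\mathfrak{g}}$. Your dominance argument via $\widetilde F=(F\vert_{\mathcal{S}})\circ\pi$ gives generic independence, which is all that the definition requires; the paper's version is marginally stronger but no harder.
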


\begin{proof}
The number of functions coincides with $\frac{1}{2}\dim(\mathcal{Z}_{\mathfrak{g}})$ (by Proposition \ref{Proposition: Symplectic subvariety}), so that we are reduced to verifying the following two things: $\widetilde{f}_1,\ldots,\widetilde{f}_r$ Poisson-commute in pairs, and the differentials $d\widetilde{f}_1,\ldots,d\widetilde{f}_r$ are linearly independent at all points in some open dense subset of $\mathcal{Z}_{\mathfrak{g}}$. Accordingly, we recall that $\mathbb{C}[\mathfrak{g}]^G$ is the Poisson centre of $\mathbb{C}[\mathfrak{g}]$. It follows that $f_1,\ldots,f_r$ Poisson-commute in pairs. This implies that the pullback functions $\mu_L^*(f_1),\ldots,\mu_L^*(f_r)$ Poisson-commute in pairs, as the moment map $\mu_L:G\times\mathfrak{g}\rightarrow\mathfrak{g}$ is a Poisson morphism. Since $Z_{\mathfrak{g}}$ is a symplectic subvariety of $G\times\mathfrak{g}$ (see Proposition \ref{Proposition: Symplectic subvariety}), we conclude that the restricted functions $\mu_L^*(f_1)\big\vert_{\mathcal{Z}_{\mathfrak{g}}},\ldots,\mu_L^*(f_r)\big\vert_{\mathcal{Z}_{\mathfrak{g}}}\in\mathbb{C}[\mathcal{Z}_{\mathfrak{g}}]$ must Poisson-commute in pairs. Note also that $$(\mu_L^*(f_i))(g,x)=f_i(\Adj_g(x))=f_i(x)=\widetilde{f}_i(g,x)$$ for all $(g,x)\in\mathcal{Z}_{\mathfrak{g}}$ and $i=1,\ldots,r$, i.e. $\mu_L^*(f_i)\big\vert_{\mathcal{Z}_{\mathfrak{g}}}=\widetilde{f}_i$ for all $i=1,\ldots,r$. These last two sentences imply that $\widetilde{f}_1,\ldots,\widetilde{f}_r$ Poisson-commute in pairs.

To address the linear independence of $d\widetilde{f}_1,\ldots,d\widetilde{f}_r$, we recall that \eqref{Equation: Variety isomorphism} is an isomorphism. It follows that the differentials of $f_1\big\vert_{\mathcal{S}},\ldots,f_r\big\vert_{\mathcal{S}}\in\mathbb{C}[\mathcal{S}]$ are linearly independent at every point of $\mathcal{S}$. Now observe that the map $\pi:\mathcal{Z}_{\mathfrak{g}}\rightarrow\mathcal{S}$ in \eqref{Equation: Surjective map} has a section, defined by sending $x\in\mathcal{S}$ to $(e,x)\in\mathcal{Z}_{\mathfrak{g}}$. We conclude that $\pi$ is a submersion. Together with our discussion of the differentials of $f_1\big\vert_{\mathcal{S}},\ldots,f_r\big\vert_{\mathcal{S}}$, this implies the following: $\pi^*(f_1\big\vert_{\mathcal{S}}),\ldots,\pi^*(f_r\big\vert_{\mathcal{S}})$ have linearly independent differentials at every point in $\mathcal{Z}_{\mathfrak{g}}$. It is also clear that $\pi^*(f_i\big\vert_{\mathcal{S}})=\widetilde{f}_i$ for all $i=1,\ldots,r$, so that our proof complete.
\end{proof} 

A natural next step is to examine the level sets of 
$$\widetilde{F}:=(\widetilde{f}_1,\ldots,\widetilde{f}_r):\mathcal{Z}_{\mathfrak{g}}\rightarrow\mathbb{C}^r,$$
which are described as follows.

 \begin{proposition}\label{Proposition: Moment map image}
The map $\widetilde{F}$ is surjective and $\widetilde{F}^{-1}(\widetilde{F}(g,x))=G_x\times\{x\}$ for all $(g,x)\in\mathcal{Z}_{\mathfrak{g}}$.
\end{proposition}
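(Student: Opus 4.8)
The plan is to exploit the factorization $\widetilde{F}=(F\big\vert_{\mathcal{S}})\circ\pi$, where $\pi:\mathcal{Z}_{\mathfrak{g}}\to\mathcal{S}$ is the surjection in \eqref{Equation: Surjective map} and $F\big\vert_{\mathcal{S}}:\mathcal{S}\xrightarrow{\cong}\mathbb{C}^r$ is the variety isomorphism in \eqref{Equation: Variety isomorphism}. This factorization holds because $\widetilde{f}_i(g,x)=f_i(x)=(f_i\big\vert_{\mathcal{S}})(\pi(g,x))$ for all $(g,x)\in\mathcal{Z}_{\mathfrak{g}}$, as already noted in the proof of Proposition \ref{Proposition: Integrable system}. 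Once this is in hand, both assertions follow from soft properties of the two maps being composed.

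For surjectivity, I would argue that $\pi$ is surjective — it admits the section $x\mapsto(e,x)$, which indeed maps $\mathcal{S}$ into $\mathcal{Z}_{\mathfrak{g}}$ since $e\in G_x$ for every $x$ — while $F\big\vert_{\mathcal{S}}$ is surjective, being an isomorphism. Hence the composite $\widetilde{F}$ is surjective. For the description of level sets, I would fix $(g,x)\in\mathcal{Z}_{\mathfrak{g}}$ and take any $(g',x')\in\widetilde{F}^{-1}(\widetilde{F}(g,x))$; then $(F\big\vert_{\mathcal{S}})(x')=(F\big\vert_{\mathcal{S}})(x)$, and injectivity of $F\big\vert_{\mathcal{S}}$ forces $x'=x$. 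Conversely, any $(g',x)\in\mathcal{Z}_{\mathfrak{g}}$ trivially satisfies $\widetilde{F}(g',x)=\widetilde{F}(g,x)$. Therefore $\widetilde{F}^{-1}(\widetilde{F}(g,x))$ is precisely the set of points of $\mathcal{Z}_{\mathfrak{g}}$ with second coordinate equal to $x$, which by \eqref{Equation: Universal centralizer} is $\{(g',x):g'\in G_x\}=G_x\times\{x\}$.

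I do not anticipate any genuine obstacle: all the real content is already packaged in the earlier facts that $\pi$ has a section and that $F\big\vert_{\mathcal{S}}$ is a bijection. The only minor point deserving a sentence is the verification that the candidate section $x\mapsto(e,x)$ actually lands in $\mathcal{Z}_{\mathfrak{g}}$, which is immediate because the identity centralizes every element of $\mathfrak{g}$.
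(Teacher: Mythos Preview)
Your proposal is correct and is essentially the paper's own argument, just packaged through the explicit factorization $\widetilde{F}=(F\big\vert_{\mathcal{S}})\circ\pi$. The paper likewise uses the section $x\mapsto(e,x)$ together with the bijectivity of $F\big\vert_{\mathcal{S}}$ to obtain surjectivity, and then uses injectivity of $F\big\vert_{\mathcal{S}}$ to force $x'=x$ and hence $(g',x')\in G_x\times\{x\}$.
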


\begin{proof}
Given $z=(z_1,\ldots,z_r)\in\mathbb{C}^r$, one can use \eqref{Equation: Variety isomorphism} to find $x\in\mathcal{S}$ such that $f_i(x)=z_i$ for all $i=1,\ldots,r$. It follows that $(e,x)\in\mathcal{Z}_{\mathfrak{g}}$ and $\widetilde{F}(e,x)=z$, establishing surjectivity.

Now fix $(g,x)\in\mathcal{Z}_{\mathfrak{g}}$, noting that the inclusion $G_x\times\{x\}\subseteq \widetilde{F}^{-1}(\widetilde{F}(g,x))$ is straightforward. To establish the opposite inclusion, suppose that $(g',x')\in\mathcal{Z}_{\mathfrak{g}}$ satisfies $\widetilde{F}(g',x')=\widetilde{F}(g,x)$. This is equivalent to the statement $f_i(x')=f_i(x)$ for all $i=1,\ldots,r$, which by \eqref{Equation: Variety isomorphism} forces $x=x'$ to hold. We conclude that $g'\in G_{x'}=G_x$, giving $(g',x')\in G_x\times\{x\}$. The inclusion $\widetilde{F}^{-1}(\widetilde{F}(g,x))\subseteq G_x\times\{x\}$ now follows, and the proof is complete. 
\end{proof}

In addition to motivating Proposition \ref{Proposition: Moment map image}, Proposition \ref{Proposition: Integrable system} naturally leads us to study the Hamiltonian vector fields $H(\widetilde{f}_1),\ldots,H(\widetilde{f}_r)$ of $\widetilde{f}_1,\ldots,\widetilde{f}_r$, respectively. We begin this study by fixing $i\in\{1,\ldots,r\}$ and $x\in\mathfrak{g}$. Note that the differential $d_xf_i$ is naturally a vector in $\mathfrak{g}^*$, which the isomorphism \eqref{Equation: Killing isomorphism} identifies with a vector $(d_xf_i)^{\vee}\in\mathfrak{g}$. It is not difficult to verify that $(d_xf_i)^{\vee}\in\mathfrak{g}_x$ (see \cite[Proposition 1.3]{KostantSolution}), implying that $d_eL_g((d_xf_i)^{\vee})\in T_g(G_x)$ for all $g\in G_x$. If we assume that $x\in\mathcal{S}$ and $g\in G_x$, i.e. $(g,x)\in\mathcal{Z}_{\mathfrak{g}}$, then
$$(d_eL_g((d_xf_i)^{\vee}),0)\in T_g(G_x)\oplus\{0\}=T_{(g,x)}(G_x\times\{x\})\subseteq T_{(g,x)}\mathcal{Z}_{\mathfrak{g}}.$$

\begin{proposition}\label{Proposition: Description of Hamiltonian vector fields}
If $(g,x)\in\mathcal{Z}_{\mathfrak{g}}$ and $i\in\{1,\ldots,r\}$, then $$H(\widetilde{f_i})_{(g,x)}=(d_eL_g((d_xf_i)^{\vee}),0).$$
\end{proposition}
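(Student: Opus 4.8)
The plan is to compute the Hamiltonian vector field $H(\widetilde f_i)$ by working directly with the explicit symplectic form $\Omega$ from \eqref{Equation: Symplectic form on cotangent bundle}, using the fact that $\mathcal{Z}_{\mathfrak{g}}$ is a symplectic subvariety (Proposition \ref{Proposition: Symplectic subvariety}), so that $H(\widetilde f_i)$ is characterized among tangent vectors to $\mathcal{Z}_{\mathfrak{g}}$ by the equation $\Omega_{(g,x)}(H(\widetilde f_i)_{(g,x)}, w) = (d_{(g,x)}\widetilde f_i)(w)$ for all $w \in T_{(g,x)}\mathcal{Z}_{\mathfrak{g}}$. First I would verify that the proposed vector $u := (d_eL_g((d_xf_i)^{\vee}),0)$ genuinely lies in $T_{(g,x)}\mathcal{Z}_{\mathfrak{g}}$; this is exactly the content of the paragraph preceding the statement, since $(d_xf_i)^\vee \in \mathfrak{g}_x$ gives $d_eL_g((d_xf_i)^\vee) \in T_g(G_x)$, and $G_x \times \{x\} \subseteq \mathcal{Z}_{\mathfrak{g}}$. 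Then, writing an arbitrary tangent vector to $\mathcal{Z}_{\mathfrak{g}}$ at $(g,x)$ in the left-trivialized form $w = (d_eL_g(y), z)$ with $(y,z) \in \mathfrak{g} \oplus \mathfrak{g}$ satisfying the linearized stabilizer constraints, I would substitute $u$ and $w$ into \eqref{Equation: Symplectic form on cotangent bundle}: since the $\mathfrak{g}$-component of $u$ is $0$, the formula collapses to $\Omega_{(g,x)}(u,w) = \langle (d_xf_i)^\vee, z\rangle + \langle x, [(d_xf_i)^\vee, y]\rangle$.

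Next I would show the second term vanishes and the first equals $(d_{(g,x)}\widetilde f_i)(w)$. For the second term, $G$-invariance of the Killing form gives $\langle x, [(d_xf_i)^\vee, y]\rangle = \langle [x,(d_xf_i)^\vee], y\rangle = 0$ because $(d_xf_i)^\vee \in \mathfrak{g}_x$, i.e. $[x,(d_xf_i)^\vee]=0$; this is the cleanest way to dispatch it and does not even require knowing the precise constraints on $y$. For the first term, I would unwind the definition $\widetilde f_i(g,x) = f_i(x)$: its differential at $(g,x)$ applied to $w = (d_eL_g(y), z)$ is just $(d_xf_i)(z)$, which by definition of the Killing-dual vector equals $\langle (d_xf_i)^\vee, z\rangle$. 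Comparing, $\Omega_{(g,x)}(u,w) = \langle (d_xf_i)^\vee, z\rangle = (d_{(g,x)}\widetilde f_i)(w)$ for every $w \in T_{(g,x)}\mathcal{Z}_{\mathfrak{g}}$, and since $\Omega$ restricted to $\mathcal{Z}_{\mathfrak{g}}$ is nondegenerate, this uniquely identifies $u$ as $H(\widetilde f_i)_{(g,x)}$.

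The main obstacle I anticipate is a bookkeeping one rather than a conceptual one: being careful about the left-trivialization conventions so that the sign and placement of the $\langle x, [y_1,y_2]\rangle$ term in \eqref{Equation: Symplectic form on cotangent bundle} are applied correctly when one argument has vanishing $\mathfrak{g}$-component, and making sure that ``$H(\widetilde f_i)$'' is normalized with the same sign convention (Hamilton's equation $\iota_{H(f)}\Omega = df$ versus $= -df$) that the paper uses elsewhere. A secondary subtlety is justifying that it suffices to test against tangent vectors $w$ of the left-trivialized form $(d_eL_g(y),z)$ with $(y,z)$ ranging over all of $T_{(g,x)}\mathcal{Z}_{\mathfrak{g}}$ and that one need not separately characterize that subspace — but since the computation $\Omega_{(g,x)}(u,w) = (d\widetilde f_i)(w)$ turns out to hold for \emph{all} $w \in T_{(g,x)}(G\times\mathfrak{g})$ once the $[x,\cdot]$ term is killed, even this is moot: the identity holds ambiently and hence a fortiori on the subvariety, so no description of $T_{(g,x)}\mathcal{Z}_{\mathfrak{g}}$ is needed beyond confirming $u$ lies in it.
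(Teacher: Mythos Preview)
Your proposal is correct and follows essentially the same argument as the paper: both verify that $u=(d_eL_g((d_xf_i)^\vee),0)$ lies in $T_{(g,x)}\mathcal{Z}_{\mathfrak{g}}$, plug $u$ and an arbitrary left-trivialized tangent vector into \eqref{Equation: Symplectic form on cotangent bundle}, kill the bracket term via $(d_xf_i)^\vee\in\mathfrak{g}_x$ and invariance of the Killing form, and identify the remaining term with $d_{(g,x)}\widetilde f_i$. Your closing observation that the identity actually holds for all ambient tangent vectors (so no description of $T_{(g,x)}\mathcal{Z}_{\mathfrak{g}}$ is needed beyond $u$ belonging to it) is a nice clarification the paper leaves implicit.
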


\begin{proof}
Noting that $T_{(g,x)}\mathcal{Z}_{\mathfrak{g}}\subseteq T_{(g,x)}(G\times\mathfrak{g})=T_gG\oplus\mathfrak{g}$, any given $v\in T_{(g,x)}\mathcal{Z}_{\mathfrak{g}}$ must have the form $v=(d_eL_g(y),z)$ with $y,z\in\mathfrak{g}$. It then follows immediately from \eqref{Equation: Definition of new f_i} that \begin{equation}\label{Equation: Preliminary} d_{(g,x)}\widetilde{f_i}(v)=d_xf_i(z).\end{equation} Now let $\omega$ and $\Omega$ denote the symplectic forms on $\mathcal{Z}_{\mathfrak{g}}$ and $G\times\mathfrak{g}$, respectively, noting that 
\begin{align*}
\omega_{(g,x)}\bigg((d_eL_g((d_xf_i)^{\vee}),0),v\bigg) & = \omega_{(g,x)}\bigg((d_eL_g((d_xf_i)^{\vee}),0),(d_eL_g(y),z)\bigg)\\
& = \Omega_{(g,x)}\bigg((d_eL_g((d_xf_i)^{\vee}),0),(d_eL_g(y),z)\bigg)\quad\text{(by Proposition \ref{Proposition: Symplectic subvariety})}\\
& = \langle (d_xf_i)^{\vee},z\rangle +\langle x,[(d_xf_i)^{\vee},y]\rangle\quad\hspace{50pt}\text{(by \eqref{Equation: Symplectic form on cotangent bundle})}\\
& = d_xf_i(z) + \langle [x,(d_xf_i)^{\vee}],y\rangle\\
& = d_xf_i(z)\quad\hspace{153pt}\text{(since $(d_xf_i)^{\vee}\in\mathfrak{g}_x$)}\\
& = d_{(g,x)}\widetilde{f_i}(v)\quad\hspace{140pt}\text{(by \eqref{Equation: Preliminary})}.
\end{align*}
This completes the proof.  
\end{proof}     

The Hamiltonian vector fields $H(\widetilde{f}_i)$ are necessarily tangent to the level sets $G_x\times\{x\}\cong G_x$ of $\widetilde{F}$, a consequence of $\widetilde{f}_1,\ldots,\widetilde{f}_r$ forming an integrable system. With this in mind, Proposition \ref{Proposition: Description of Hamiltonian vector fields} implies the following description of $H(\widetilde{f}_i)$ on each level set $G_x$: $H(\widetilde{f}_i)$ is the left-invariant vector field on $G_x$ associated to $(d_xf_i)^{\vee}\in\mathfrak{g}_x$. In other words, the flow of $H(\widetilde{f}_i)$ is given by \begin{equation}\label{Equation: Complex flow}\Phi_i:\mathbb{C}\times\mathcal{Z}_{\mathfrak{g}}\rightarrow\mathcal{Z}_{\mathfrak{g}},\quad (t,(g,x))\mapsto (g\exp(t (d_xf_i)^{\vee}),x),\quad (t,(g,x))\in\mathbb{C}\times\mathcal{Z}_{\mathfrak{g}}.\end{equation}

Consider the biholomorphism $(\Phi_i)_{t}:=\Phi_i(t,\cdot):\mathcal{Z}_{\mathfrak{g}}\rightarrow\mathcal{Z}_{\mathfrak{g}}$ for each fixed $t\in\mathbb{C}$ and $i\in\{1,\ldots,r\}$. By appealing to generalities on Hamiltonian flows or performing a direct calculation, one sees that $(\Phi_i)_t$ is a symplectomorphism. It follows that $$\Phi_{\lambda}:=(\Phi_1)_{\lambda_1}\circ\cdots\circ(\Phi_r)_{\lambda_r}:\mathcal{Z}_{\mathfrak{g}}\rightarrow\mathcal{Z}_{\mathfrak{g}}$$ defines a symplectomorphism for each $\lambda=(\lambda_1,\ldots,\lambda_r)\in\mathbb{C}^r$. Now form the map \begin{equation}\label{Equation: Local biholomorphism}\Phi:\mathbb{C}^r\times\mathcal{S}\rightarrow\mathcal{Z}_{\mathfrak{g}},\quad (\lambda,x)\mapsto \Phi_{\lambda}(e,x),\quad (\lambda,x)\in\mathbb{C}^r\times\mathcal{S}.\end{equation} An application of \eqref{Equation: Complex flow} reveals that
\begin{equation}\label{Equation: Long formula}\Phi(\lambda,x)=\bigg(\exp\bigg(\sum_{i=1}^r\lambda_i(d_xf_i)^{\vee}\bigg),x\bigg),\quad (\lambda,x)\in\mathbb{C}^r\times\mathcal{S}.\end{equation}

\begin{proposition}\label{Proposition: Surjective local biholomorphism}
The map $\Phi:\mathbb{C}^r\times\mathcal{S}\rightarrow\mathcal{Z}_{\mathfrak{g}}$ is a a surjective local biholomorphism.
\end{proposition}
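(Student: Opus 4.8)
The plan is to establish the two assertions separately, after recording one linear-algebra observation. First I would fix $x\in\mathcal{S}$ and note that, since $F\vert_{\mathcal{S}}:\mathcal{S}\to\mathbb{C}^r$ is a variety isomorphism (see \eqref{Equation: Variety isomorphism}), the differentials $d_x(f_1\vert_{\mathcal{S}}),\ldots,d_x(f_r\vert_{\mathcal{S}})$ are linearly independent; hence so are $d_xf_1,\ldots,d_xf_r\in\mathfrak{g}^*$, and therefore so are $(d_xf_1)^{\vee},\ldots,(d_xf_r)^{\vee}\in\mathfrak{g}$. Since these vectors lie in $\mathfrak{g}_x$ (as recalled just before Proposition \ref{Proposition: Description of Hamiltonian vector fields}) and $\dim(\mathfrak{g}_x)=r$, they form a basis of $\mathfrak{g}_x$; equivalently, $\lambda\mapsto\sum_{i=1}^r\lambda_i(d_xf_i)^{\vee}$ is a linear isomorphism $\mathbb{C}^r\xrightarrow{\ \cong\ }\mathfrak{g}_x$. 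This is the only fact about the $f_i$ that the argument will use.

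For surjectivity I would take an arbitrary $(g,x)\in\mathcal{Z}_{\mathfrak{g}}$, so that $x\in\mathcal{S}$ and $g\in G_x$, and aim to produce $\lambda\in\mathbb{C}^r$ with $\exp\big(\sum_{i=1}^r\lambda_i(d_xf_i)^{\vee}\big)=g$; by \eqref{Equation: Long formula} this gives $\Phi(\lambda,x)=(g,x)$. By the observation above it suffices that $\exp\vert_{\mathfrak{g}_x}:\mathfrak{g}_x\to G_x$ be surjective, and this follows from the structure of centralizers of regular elements: for $x$ regular, $\mathfrak{g}_x$ is abelian and $G_x$ is a connected affine algebraic group, and a connected abelian affine algebraic group over $\mathbb{C}$ has surjective exponential. (Alternatively, once the local-biholomorphism claim is in hand one notes that commutativity of the $(d_xf_i)^{\vee}$ makes $\lambda\mapsto\exp\big(\sum_i\lambda_i(d_xf_i)^{\vee}\big)$ a homomorphism $(\mathbb{C}^r,+)\to G_x$ whose image is open, hence closed, hence a union of connected components of $G_x$, hence all of the connected group $G_x$.)

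For the local-biholomorphism claim I would use that $\mathbb{C}^r\times\mathcal{S}$ and $\mathcal{Z}_{\mathfrak{g}}$ are both $2r$-dimensional complex manifolds (Proposition \ref{Proposition: Symplectic subvariety}), so that it is enough to check that $d_{(\lambda,x)}\Phi$ is injective at every point. Writing $g_{\lambda}:=\exp\big(\sum_i\lambda_i(d_xf_i)^{\vee}\big)$ and differentiating \eqref{Equation: Long formula}: in the $j$-th $\mathbb{C}^r$-direction the commutativity of the $(d_xf_i)^{\vee}$ makes the derivative equal to $(d_eL_{g_{\lambda}}((d_xf_j)^{\vee}),0)$, so the $\mathbb{C}^r$-directions map isomorphically onto $d_eL_{g_{\lambda}}(\mathfrak{g}_x)\oplus\{0\}=T_{(g_{\lambda},x)}(\pi^{-1}(x))=\ker(d_{(g_{\lambda},x)}\pi)$, the vertical tangent space of $\pi$; meanwhile $\pi\circ\Phi$ is just the projection $\mathbb{C}^r\times\mathcal{S}\to\mathcal{S}$, so $d\pi\circ d\Phi$ restricts to the identity on the $T_x\mathcal{S}$-summand. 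Injectivity of $d_{(\lambda,x)}\Phi$ then follows by a short chase: a kernel element is killed by $d\pi$, so its $T_x\mathcal{S}$-component vanishes, and then its $\mathbb{C}^r$-component vanishes by linear independence of the $(d_xf_j)^{\vee}$.

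I expect the main obstacle to be the surjectivity step, and specifically the appeal to connectedness (and commutativity) of the regular centralizer $G_x$ — this is the one genuinely non-formal input — whereas the derivative computation is routine once one exploits that the vectors $(d_xf_i)^{\vee}$ pairwise commute. It would be worth deciding whether to quote connectedness of $G_x$ for $x\in\mathfrak{g}_{\mathrm{reg}}$ from the literature or to derive it here via the Jordan decomposition of $x$, reducing to the regular semisimple and regular nilpotent cases.
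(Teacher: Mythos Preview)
Your proposal is correct and follows essentially the same approach as the paper: both argue surjectivity via surjectivity of $\exp\vert_{\mathfrak{g}_x}:\mathfrak{g}_x\to G_x$ together with the fact that $(d_xf_1)^{\vee},\ldots,(d_xf_r)^{\vee}$ is a basis of $\mathfrak{g}_x$, and both prove the local-biholomorphism claim by showing $d_{(\lambda,x)}\Phi$ is injective using that the $\mathbb{C}^r$-directions hit the vertical tangent space of $\pi$ while the $\mathcal{S}$-directions project identically. The only cosmetic differences are that the paper cites Kostant's explicit isomorphism $G_x\cong(\mathbb{C}^{\times})^p\times\mathbb{C}^q$ for the exponential step (where you invoke the general structure of connected abelian affine groups), and the paper packages the $\mathbb{C}^r$-direction derivative as $H(\widetilde{f}_i)_{\Phi(\lambda,x)}$ via commuting flows (a formulation it reuses in the next proposition), whereas you compute it directly from commutativity of the $(d_xf_i)^{\vee}$.
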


\begin{proof}
We begin with a proof of surjectivity, for which we suppose that $(g,x)\in\mathcal{Z}_{\mathfrak{g}}$. Since $x\in\mathfrak{g}_{\text{reg}}$, we have an algebraic group isomorphism $G_x\cong(\mathbb{C}^{\times})^p\times\mathbb{C}^q$ for some $p,q\geq 0$ with $p+q=r$ (see \cite[Proposition 2.4]{KostantSolution}). It follows that the restricted exponential map $\exp\big\vert_{\mathfrak{g}_x}:\mathfrak{g}_x\rightarrow G_x$ is surjective. We may therefore find $y\in\mathfrak{g}_x$ with $\exp(y)=g$. 

Now note that the differentials of $f_1,\ldots,f_r$ are linearly independent at each point of $\mathfrak{g}_{\text{reg}}$ (see \cite[Theorem 9]{KostantLie}), so that $d_xf_1,\ldots,d_xf_r$ are linearly independent. It follows that $(d_xf_1)^{\vee},\ldots,(d_xf_r)^{\vee}$ are linearly independent elements of $\mathfrak{g}_x$, and a dimension count then implies that $(d_xf_1)^{\vee},\ldots,(d_xf_r)^{\vee}$ form a basis of $\mathfrak{g}_x$. We may therefore find $\lambda_1,\ldots,\lambda_r\in\mathbb{C}$ for which $y=\lambda_1(d_xf_1)^{\vee}+\cdots+\lambda_r(d_xf_r)^{\vee}$. Setting $\lambda:=(\lambda_1,\ldots,\lambda_r)$, an application of \eqref{Equation: Long formula} gives $$\Phi(\lambda,x)=(\exp(y),x)=(g,x).$$ We conclude that $\Phi$ is surjective.

To establish that $\Phi$ is a local biholomorphism, fix a point $(\lambda,x)\in\mathbb{C}^r\times\mathcal{S}$. We have $$T_{(\lambda,x)}(\mathbb{C}^r\times\mathcal{S})=T_{\lambda}\mathbb{C}^r\oplus T_x\mathcal{S}=\mathbb{C}^r\oplus\mathfrak{g}_{\eta},$$ so that the differential of $\Phi$ at $(\lambda,x)$ is a map
$$d_{(\lambda,x)}\Phi:\mathbb{C}^r\oplus\mathfrak{g}_{\eta}\rightarrow T_{\Phi(\lambda,x)}\mathcal{Z}_{\mathfrak{g}}.$$ Letting $e_i\in\mathbb{C}^r$ denote the $i^{\text{th}}$ standard basis vector and noting that the Hamiltonian flows $\Phi_1,\ldots,\Phi_r$ necessarily commute in pairs (by Proposition \ref{Proposition: Integrable system}), we have
\begin{align}
\begin{split}\label{Equation: Hamiltonian calculation}
d_{(\lambda,x)}\Phi(e_i,0) & = \frac{d}{dt}\bigg\vert_{t=0}\Phi(\lambda+te_i,x)\\
& =  \frac{d}{dt}\bigg\vert_{t=0}\bigg(\bigg((\Phi_1)_{\lambda_1}\circ\cdots\circ(\Phi_{i-1})_{\lambda_{i-1}}\circ(\Phi_i)_{\lambda_i+t}\circ(\Phi_{i+1})_{\lambda_{i+1}}\circ\cdots\circ(\Phi_{r})_{\lambda_r}\bigg)(e,x)\bigg)\\
& = \frac{d}{dt}\bigg\vert_{t=0}\bigg(\bigg((\Phi_i)_{t}\circ(\Phi_1)_{\lambda_1}\circ\cdots\circ(\Phi_r)_{\lambda_r}\bigg)(e,x)\bigg)\\
& = \frac{d}{dt}\bigg\vert_{t=0}(\Phi_i)_t(\Phi(\lambda,x))\\
& = H(\widetilde{f_i})_{\Phi(\lambda,x)}.
\end{split}
\end{align}
It follows that for $y=(y_1,\ldots,y_r)\in\mathbb{C}^r$,
\begin{equation}\label{Equation: Differential formula} d_{(\lambda,x)}\Phi(y,0)=\sum_{i=1}^ry_iH(\widetilde{f}_i)_{\Phi(\lambda,x)}.\end{equation}

Now note that $$T_{\Phi(\lambda,x)}\mathcal{Z}_{\mathfrak{g}}\subseteq T_{\Phi(\lambda,x)}(G\times\mathfrak{g})=T_gG\oplus\mathfrak{g},$$ where $g:=\exp\big(\sum_{i=1}^r\lambda_i(d_xf_i)^{\vee}\big)$. We may therefore regard $d_{(\lambda,x)}\Phi$ as taking values in $T_gG\oplus\mathfrak{g}$. With this in mind, the following is a straightforward consequence of \eqref{Equation: Long formula}: if $z\in\mathfrak{g}_{\eta}$, then 
\begin{equation}\label{Equation: Preliminary differential} d_{(\lambda,x)}\Phi(0,z)=(w,z)\in T_gG\oplus\mathfrak{g}\end{equation} for some vector $w\in T_gG$. This combines with \eqref{Equation: Differential formula} to give
$$d_{(\lambda,x)}\Phi(y,z)=\bigg(\sum_{i=1}^ry_iH(\widetilde{f}_i)_{\Phi(\lambda,x)}\bigg)+(w,z).$$

Suppose that $d_{(\lambda,x)}\Phi(y,z)=0$, i.e.
$$\bigg(\sum_{i=1}^ry_iH(\widetilde{f}_i)_{\Phi(\lambda,x)}\bigg)+(w,z)=0\in T_gG\oplus\mathfrak{g}.$$ Proposition \ref{Proposition: Description of Hamiltonian vector fields} implies that the linear combination of $H(\widetilde{f}_1)_{\Phi(\lambda,x)},\ldots, H(\widetilde{f}_r)_{\Phi(\lambda,x)}$ lies in $T_g G\oplus\{0\}\subseteq T_gG\oplus\mathfrak{g}$, so that $z=0$ must hold. The equation \eqref{Equation: Preliminary differential} then gives $w=0$, and hence
$$\sum_{i=1}^ry_iH(\widetilde{f}_i)_{\Phi(\lambda,x)}=0.$$
At the same time, the proof of Proposition \ref{Proposition: Integrable system} explains that the differentials of $\widetilde{f}_1,\ldots,\widetilde{f}_r$ are linearly independent at every point in $\mathcal{Z}_{\mathfrak{g}}$. We conclude that $H(\widetilde{f}_1),\ldots,H(\widetilde{f}_r)$ are linearly independent at every point in $\mathcal{Z}_{\mathfrak{g}}$, which in particular implies that $y_1=\cdots=y_r=0$.

We have shown that $(y,z)=(0,0)$, meaning that $d_{(\lambda,x)}\Phi$ is injective. Since the complex manifolds $\mathbb{C}^r\times\mathcal{S}$ and $\mathcal{Z}_{\mathfrak{g}}$ are both $2r$-dimensional, it follows that $d_{(\lambda,x)}\Phi$ is an isomorphism for all $(\lambda,x)\in\mathbb{C}^r\times\mathcal{S}$. This completes the proof.    
\end{proof}

Now let $z_1,\ldots,z_r$ denote the usual coordinates on $\mathbb{C}^r$, and recall that the $\widehat{f}_i:=f_i\big\vert_{\mathcal{S}}$ form a system of coordinates on $\mathcal{S}$ (see \eqref{Equation: Variety isomorphism}). Abusing notation slightly, we let $z_1,\ldots,z_r,\widehat{f}_1,\ldots,\widehat{f}_r$ denote the induced system of coordinates on $\mathbb{C}^r\times\mathcal{S}$. One immediate observation is that \begin{equation}\label{Equation: Darboux equation}\Phi^*(\widetilde{f}_i)=\widehat{f}_i\end{equation} for all $i\in\{1,\ldots,r\}$. We also note that  
$$\nu:=\sum_{i=1}^r dz_i\wedge d\widehat{f}_i$$
defines a symplectic form on $\mathbb{C}^r\times\mathcal{S}$.

\begin{proposition}\label{Proposition: Symplectic forms}
We have $\Phi^*(\omega)=\nu$, where $\omega$ is the symplectic form on $\mathcal{Z}_{\mathfrak{g}}$.
\end{proposition}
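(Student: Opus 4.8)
The plan is to verify $\Phi^*(\omega)=\nu$ pointwise, comparing the two alternating forms on a convenient spanning set of each tangent space $T_{(\lambda,x)}(\mathbb{C}^r\times\mathcal{S})=\mathbb{C}^r\oplus\mathfrak{g}_{\eta}$. Write $(e_1,0),\ldots,(e_r,0)$ for the standard basis of the $\mathbb{C}^r$-summand, and let $(0,z)$ range over the $\mathfrak{g}_{\eta}$-summand; it then suffices to compare $\Phi^*(\omega)$ and $\nu$ on the three types of pairs $\big((e_i,0),(e_j,0)\big)$, $\big((e_i,0),(0,z)\big)$, and $\big((0,z),(0,z')\big)$.

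The first two types follow from facts already in hand. By \eqref{Equation: Hamiltonian calculation} we have $d_{(\lambda,x)}\Phi(e_i,0)=H(\widetilde{f}_i)_{\Phi(\lambda,x)}$, so $\Phi^*(\omega)\big((e_i,0),(e_j,0)\big)=\omega\big(H(\widetilde{f}_i),H(\widetilde{f}_j)\big)=0$ since $\widetilde{f}_1,\ldots,\widetilde{f}_r$ Poisson-commute (Proposition \ref{Proposition: Integrable system}), while $\nu\big((e_i,0),(e_j,0)\big)=0$ because each $d\widehat{f}_k$ annihilates the $\mathbb{C}^r$-directions. For a mixed pair, combining \eqref{Equation: Hamiltonian calculation}, the defining relation $\iota_{H(\widetilde{f}_i)}\omega=d\widetilde{f}_i$, and the identity $\Phi^*(\widetilde{f}_i)=\widehat{f}_i$ of \eqref{Equation: Darboux equation} gives $\Phi^*(\omega)\big((e_i,0),(0,z)\big)=d\widetilde{f}_i\big(d_{(\lambda,x)}\Phi(0,z)\big)=d\widehat{f}_i(0,z)$, which is exactly $\nu\big((e_i,0),(0,z)\big)$ by the shape of $\nu$.

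The pairs of the third type constitute the main point. I would show that $\Phi^*(\omega)$ restricts to zero on each slice $\{\lambda\}\times\mathcal{S}$. Fix $\lambda$, let $\iota_{\lambda}:\mathcal{S}\hookrightarrow\mathbb{C}^r\times\mathcal{S}$ be $x\mapsto(\lambda,x)$, and let $\sigma:\mathcal{S}\rightarrow\mathcal{Z}_{\mathfrak{g}}$, $\sigma(x)=(e,x)$, be the section of $\pi$ used in the proof of Proposition \ref{Proposition: Integrable system}; by \eqref{Equation: Local biholomorphism} we have $\Phi\circ\iota_{\lambda}=\Phi_{\lambda}\circ\sigma$. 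Since each $\Phi_{\lambda}$ is a symplectomorphism, $\iota_{\lambda}^*\Phi^*(\omega)=\sigma^*(\omega)$. Now $\sigma(\mathcal{S})=\{e\}\times\mathcal{S}$ is an $r$-dimensional submanifold of the $2r$-dimensional symplectic variety $\mathcal{Z}_{\mathfrak{g}}$, and it is isotropic: a tangent vector there at $(e,x)$ has the form $(0,z)=(d_eL_e(0),z)$ with $z\in\mathfrak{g}_{\eta}$, so $\omega_{(e,x)}\big((0,z),(0,z')\big)=\Omega_{(e,x)}\big((d_eL_e(0),z),(d_eL_e(0),z')\big)=0$ by \eqref{Equation: Symplectic form on cotangent bundle} and Proposition \ref{Proposition: Symplectic subvariety}. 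Hence $\{e\}\times\mathcal{S}$ is Lagrangian, $\sigma^*(\omega)=0$, and therefore $\Phi^*(\omega)\big((0,z),(0,z')\big)=0=\nu\big((0,z),(0,z')\big)$. Assembling the three cases yields $\Phi^*(\omega)=\nu$. The only thing to watch throughout is the bookkeeping of tangent-space identifications (via $d_eL_g$ and \eqref{Equation: Killing isomorphism}) so that \eqref{Equation: Symplectic form on cotangent bundle} applies verbatim; this is routine rather than a genuine obstacle, and the isotropy of $\{e\}\times\mathcal{S}$ is really the heart of the argument.
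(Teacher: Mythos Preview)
Your proposal is correct and follows essentially the same approach as the paper: both check the three types of pairings on $T_{(\lambda,x)}(\mathbb{C}^r\times\mathcal{S})$, using \eqref{Equation: Hamiltonian calculation} and Poisson-commutativity for the first, the Hamiltonian relation together with \eqref{Equation: Darboux equation} for the mixed pairs, and the symplectomorphism property of $\Phi_{\lambda}$ to reduce the third type to the vanishing $\Omega_{(e,x)}\big((0,z),(0,z')\big)=0$ from \eqref{Equation: Symplectic form on cotangent bundle}. Your packaging of the third step as ``$\{e\}\times\mathcal{S}$ is Lagrangian'' is a pleasant conceptual gloss, but the underlying computation is identical to the paper's.
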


\begin{proof}
Let $\partial f_1,\ldots,\partial f_r$ denote the coordinate vector fields on $\mathcal{S}$ associated to $\widehat{f}_1,\ldots,\widehat{f}_r$, respectively. It follows that for each $(\lambda,x)\in\mathbb{C}^r\times\mathcal{S}$, $\{(e_1,0),\ldots,(e_r,0),(0,(\partial f_1)_x),\ldots,(0,(\partial f_r)_x)\}$ is a basis of $T_{(\lambda,x)}(\mathbb{C}^r\times\mathcal{S})=\mathbb{C}^r\oplus\mathfrak{g}_{\eta}$. Observe also that $\nu_{(\lambda,x)}$ is the unique skew-symmetric bilinear form on $\mathbb{C}^r\oplus\mathfrak{g}_{\eta}$ satisfying the following conditions:
\begin{itemize}
\item $\nu_{(\lambda,x)}\big((e_i,0),(e_j,0)\big)=0$ for all $i,j\in\{1,\ldots,r\}$;
\item $\nu_{(\lambda,x)}\big((e_i,0),(0,(\partial f_j)_x)\big)=\delta_{ij}$ for all $i,j\in\{1,\ldots,r\}$;
\item $\nu_{(\lambda,x)}\big((0,(\partial f_i)_x),(0,(\partial f_j)_x)\big)=0$ for all $i,j\in\{1,\ldots,r\}$. 
\end{itemize}
It will therefore suffice to show that these identities hold after replacing $\nu_{(\lambda,x)}$ with $(\Phi^*(\omega))_{(\lambda,x)}$. In other words, it will suffice to verify the following identities:
\begin{itemize}
\item[(i)] $\omega_{\Phi(\lambda,x)}\big(d_{(\lambda,x)}\Phi(e_i,0),d_{(\lambda,x)}\Phi(e_j,0)\big)=0$ for all $i,j\in\{1,\ldots,r\}$;
\item[(ii)] $\omega_{\Phi(\lambda,x)}\big(d_{(\lambda,x)}\Phi(e_i,0),d_{(\lambda,x)}\Phi(0,(\partial f_j)_x)\big)=\delta_{ij}$ for all $i,j\in\{1,\ldots,r\}$;
\item[(iii)] $\omega_{\Phi(\lambda,x)}\big(d_{(\lambda,x)}\Phi(0,(\partial f_i)_x),d_{(\lambda,x)}\Phi(0,(\partial f_j)_x)\big)=0$ for all $i,j\in\{1,\ldots,r\}$. 
\end{itemize}

To address (i), note that \eqref{Equation: Hamiltonian calculation} gives 
$$\omega_{\Phi(\lambda,x)}\big(d_{(\lambda,x)}\Phi(e_i,0),d_{(\lambda,x)}\Phi(e_j,0)\big) = \omega_{\Phi(\lambda,x)}\big(H(\widetilde{f}_i)_{\Phi(\lambda,x)},H(\widetilde{f}_j)_{\Phi(\lambda,x)}\big).$$
The right hand side is necessarily zero for all $i,j\in\{1,\ldots,r\}$, as $\widetilde{f}_1,\ldots,\widetilde{f}_r$ Poisson-commute in pairs (see Proposition \ref{Proposition: Integrable system}). This verifies (i).

We begin the proof of (ii) with the the following calculation:
\begin{align*}
\omega_{\Phi(\lambda,x)}\big(d_{(\lambda,x)}\Phi(e_i,0),d_{(\lambda,x)}\Phi(0,(\partial f_j)_x)\big) & = \omega_{\Phi(\lambda,x)}\big(H(\widetilde{f}_i)_{\Phi(\lambda,x)},d_{(\lambda,x)}\Phi(0,(\partial f_j)_x)\big) \hspace{15pt} \text{(by \eqref{Equation: Hamiltonian calculation})}\\
& = d_{\Phi(\lambda,x)}\widetilde{f}_i\big(d_{(\lambda,x)}\Phi(0,(\partial f_j)_x)\big)\\
& = d_{(\lambda,x)}(\widetilde{f}_i\circ\Phi)(0,(\partial f_j)_x)\\
& = d_{(\lambda,x)}\widehat{f}_i(0,(\partial f_j)_x)\quad\hspace{103pt}\text{(by \eqref{Equation: Darboux equation})},\\
\end{align*}
where $d_{(\lambda,x)}\widehat{f}_i$ is the differential of $\widehat{f}_i$ as a coordinate on $\mathbb{C}^r\times\mathcal{S}$. It is straightforward to see that the last line equals $d_x\widehat{f}_i((\partial f_j)_x)$, where $d_x\widehat{f}_i$ is the differential of the coordinate $\widehat{f}_i$ on $\mathcal{S}$. Since $d_x\widehat{f}_i((\partial f_j)_x)=\delta_{ij}$, we have shown that (ii) holds. 

Before proving (iii), we note the following straightforward consequence of the definition \eqref{Equation: Local biholomorphism}: \begin{equation}\label{Equation: Quick identity} d_{(\lambda,x)}\Phi(0,(\partial f_i)_x)=d_{(e,x)}\Phi_{\lambda}(0,(\partial f_i)_x)\end{equation} for all $i\in\{1,\ldots,r\}$. Note that the vector $(0,(\partial f_i)_x)$ appearing on the right hand side belongs to $T_{(e,x)}\mathcal{Z}_{\mathfrak{g}}\subseteq T_{(e,x)}(G\times\mathfrak{g})=\mathfrak{g}\oplus\mathfrak{g}$, while on the left hand side we have $(0,(\partial f_i)_x)\in T_{(\lambda,x)}(\mathbb{C}^r\times\mathcal{S})=\mathbb{C}^r\oplus\mathfrak{g}_{\eta}$.

Using \eqref{Equation: Quick identity} and recalling that $\Phi_{\lambda}:\mathcal{Z}_{\mathfrak{g}}\rightarrow\mathcal{Z}_{\mathfrak{g}}$ is a symplectomorphism, we obtain
\begin{align*}
\omega_{\Phi(\lambda,x)}\big(d_{(\lambda,x)}\Phi(0,(\partial f_i)_x),d_{(\lambda,x)}\Phi(0,(\partial f_j)_x)\big) & = \omega_{\Phi_{\lambda}(e,x)}\big(d_{(e,x)}\Phi_{\lambda}(0,(\partial f_i)_x),d_{(e,x)}\Phi_{\lambda}(0,(\partial f_j)_x)\big)\\
& = (\Phi_{\lambda}^*(\omega))_{(e,x)}\big((0,(\partial f_i)_x),(0,(\partial f_j)_x)\big)\\
& = \omega_{(e,x)}\big((0,(\partial f_i)_x),(0,(\partial f_j)_x)\big).\\
\end{align*}
Now recall that $(\mathcal{Z}_{\mathfrak{g}},\omega)$ is a symplectic subvariety of $(G\times\mathfrak{g},\Omega)$, meaning that the last line is 
$$\Omega_{(e,x)}\big((0,(\partial f_i)_x),(0,(\partial f_j)_x)\big).$$ This is zero by virtue of the formula \eqref{Equation: Symplectic form on cotangent bundle}, proving that (iii) holds.    
\end{proof}

\begin{remark}\label{Remark: CJL coordinates}
Proposition \ref{Proposition: Symplectic forms} affords us a brief proof that $\Phi$ is a local biholomorphism. One begins by noting that $\nu$ is non-degenerate, and that the dimensions of $\mathbb{C}^r\times\mathcal{S}$ and $\mathcal{Z}_{\mathfrak{g}}$ coincide. Together with the identity $\Phi^*(\omega)=\nu$ from Proposition \ref{Proposition: Symplectic forms}, these observations force $d_{(\lambda,x)}\Phi$ to be an isomorphism for all $(\lambda,x)\in\mathbb{C}^r\times\mathcal{S}$. The proof of Proposition \ref{Proposition: Surjective local biholomorphism} establishes this fact via more conventional arguments.     
\end{remark}

Now fix any point $x\in\mathcal{Z}_{\mathfrak{g}}$. Proposition \ref{Proposition: Surjective local biholomorphism} then implies that $\Phi$ restricts to a biholomorphism $U\rightarrow V$, where $V$ is a suitably chosen open neighbourhood of $x$ in $\mathcal{Z}_{\mathfrak{g}}$ and $U$ is an appropriate open subset of $\mathbb{C}^r\times\mathcal{S}$. The coordinates $z_1,\ldots,z_r,\widehat{f}_1,\ldots,\widehat{f}_r$ on $\mathbb{C}^r\times\mathcal{S}$ thereby induce coordinates on $V$. By \eqref{Equation: Darboux equation} and Proposition \ref{Proposition: Symplectic forms}, these latter coordinates satisfy the conclusion of the Carath\'eodory--Jacobi--Lie theorem for completely integrable systems (e.g. \cite[Theorem 4.1.8]{Ortega}). A less formal statement is that we have produced explicit \textit{Carath\'eodory--Jacobi--Lie coordinates} for the integrable system on $\mathcal{Z}_{\mathfrak{g}}$.

\section{The Kostant--Toda lattice}\label{Section: The Kostant--Toda lattice}
We now discuss the second integrable system featuring in our paper --- the Kostant--Toda lattice. The discussion begins in \ref{Subsection: The basics}, which recalls the details underlying Kostant's construction of the Kostant--Toda lattice. Section \ref{Subsection: Some technical results} subsequently introduces the open set $\mathcal{V}$ from Theorem \ref{Theorem: Main theorem} (see \eqref{Equation: Definition of open Kostant-Toda}), and in addition studies certain holomorphic maps defined on $\mathcal{V}$. Using the setup developed in \ref{Subsection: Some technical results}, Section \ref{Subsection: Level sets} recalls Kostant's description of level sets in the Kostant--Toda lattice (Theorems \ref{Theorem: Kostant fibre isomorphism} and \ref{Theorem: Kostant Hamiltonian}). Section \ref{Subsection: A holomorphic map} then harnesses this description to define and study a holomorphic map $\mathcal{V}\rightarrow G^*$ (see \eqref{Equation: Definition of lambda}), where $G^*$ is an explicit open dense subset of $G$ (see \eqref{Equation: Translate}).  

\subsection{The basics}\label{Subsection: The basics}
Recall the notation and conventions set in Section \ref{Section: Notation and conventions}. Note that our Cartan subalgebra $\mathfrak{t}$ and (positive) simple roots $\Delta$ determine the following data: roots $\mathcal{R}\subseteq\mathfrak{t}^*$, positive roots $\mathcal{R}_{+}\subseteq\mathcal{R}$, negative roots $\mathcal{R}_{-}\subseteq\mathcal{R}$, a positive Borel subalgebra $\mathfrak{b}\subseteq\mathfrak{g}$, and a negative Borel subalgebra $\mathfrak{b}_{-}\subseteq\mathfrak{g}$. Let $\mathfrak{u}$ and $\mathfrak{u}_{-}$ be the nilpotent radicals of $\mathfrak{b}$ and $\mathfrak{b}_{-}$, respectively. It follows that  $$\mathfrak{b}=\mathfrak{t}\oplus\bigoplus_{\alpha\in\mathcal{R}_{+}}\mathfrak{g}_{\alpha},\quad \mathfrak{u}=\bigoplus_{\alpha\in\mathcal{R}_{+}}\mathfrak{g}_{\alpha},\quad \mathfrak{b}_{-}=\mathfrak{t}\oplus\bigoplus_{\alpha\in\mathcal{R}_{-}}\mathfrak{g}_{\alpha},\quad\text{and}\quad \mathfrak{u}_{-}=\bigoplus_{\alpha\in\mathcal{R}_{-}}\mathfrak{g}_{\alpha},$$
where $$\mathfrak{g}_{\alpha}:=\{y\in\mathfrak{g}:[x,y]=\alpha(x)y\text{ for all }x\in\mathfrak{t}\}$$ is the root space associated to $\alpha\in\mathcal{R}$.

Let $T$, $B$, $B_{-}$, $U$, and $U_{-}$ denote the closed, connected subgroups of $G$ with respective Lie algebras $\mathfrak{t}$, $\mathfrak{b}$, $\mathfrak{b}_{-}$, $\mathfrak{u}$, and $\mathfrak{u}_{-}$. We then have a Weyl group $W:=N_G(T)/T$ and a weight lattice $\Lambda:=\mathrm{Hom}(T,\mathbb{C}^{\times})$ of all algebraic group morphisms $T\rightarrow\mathbb{C}^{\times}$. There is a canonical identification of $\Lambda$ with a $\mathbb{Z}$-lattice in $\mathfrak{t}^*$, and we shall make no distinction between elements of $\Lambda$ and those of the aforementioned lattice. Each $\alpha\in\mathcal{R}$ may thereby be considered an algebraic group morphism $T\rightarrow\mathbb{C}^{\times}$, allowing us to write
$$\mathfrak{g}_{\alpha}=\{y\in\mathfrak{g}:\Adj_t(y)=\alpha(t)y\text{ for all }t\in T\}.$$

Now let $\mathcal{O}_{\text{KT}}\subseteq\mathfrak{g}$ be as defined in \eqref{Equation: Kostant--Toda definition}, and let $\overline{f}_1,\ldots,\overline{f}_r:\mathcal{O}_{\text{KT}}\rightarrow\mathbb{C}$ denote the restrictions of $f_1,\ldots,f_r$ to $\mathcal{O}_{\text{KT}}$, respectively. With these things in mind, we summarize the results \cite[Proposition 2.3.1, Proposition 6.4]{KostantSolution} and \cite[Proposition 4, Theorem 29]{KostantFlag} as follows.

\begin{theorem}\label{Theorem: Kostant's theorem}
The variety $\mathcal{O}_{\emph{KT}}$ carries a canonical symplectic form, and $\overline{f}_1,\ldots,\overline{f}_r$ form a completely integrable system on $\mathcal{O}_{\emph{KT}}$ with respect to this symplectic form.
\end{theorem}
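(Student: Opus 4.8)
The plan is to recover the statement from Kostant's work in three steps: exhibit the canonical symplectic form, check that $\overline{f}_1,\ldots,\overline{f}_r$ Poisson-commute in pairs, and check that their differentials are linearly independent on a dense open subset. Since $\dim\mathcal{O}_{\text{KT}}=\dim\mathfrak{t}+\lvert\Delta\rvert=r+r=2r$ by \eqref{Equation: Kostant--Toda definition}, these three facts together constitute the asserted complete integrability. For the symplectic form I would recall Kostant's realisation of $\mathcal{O}_{\text{KT}}$ as a coadjoint orbit. Via the Killing-form identification $\mathfrak{g}\cong\mathfrak{g}^*$ of Section \ref{Section: Notation and conventions} one has $\mathfrak{b}_-^*\cong\mathfrak{g}/\mathfrak{u}_-\cong\mathfrak{t}\oplus\mathfrak{u}$, and because the $\mathfrak{u}_-$-component of every element of $\mathcal{O}_{\text{KT}}$ is the fixed vector $\xi\in\mathfrak{u}_-$, the projection $\mathfrak{g}\to\mathfrak{b}_-^*$ carries $\mathcal{O}_{\text{KT}}$ bijectively onto the coadjoint $B_-$-orbit of $\eta$ (viewed in $\mathfrak{u}\subseteq\mathfrak{b}_-^*$). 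The Kirillov--Kostant--Souriau form on that orbit then pulls back to the canonical symplectic form on $\mathcal{O}_{\text{KT}}$; this is \cite[Proposition 2.3.1]{KostantSolution} and \cite[Proposition 4]{KostantFlag}.

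For the Poisson-commutativity I would invoke the Adler--Kostant--Symes mechanism underpinning this coadjoint picture. The generators $f_1,\ldots,f_r$ lie in the Poisson centre $\mathbb{C}[\mathfrak{g}]^G$ of $\mathbb{C}[\mathfrak{g}]$, hence are Casimirs for the Lie--Poisson structure; the triangular decomposition $\mathfrak{g}=\mathfrak{u}_-\oplus\mathfrak{t}\oplus\mathfrak{u}$, together with its associated classical $r$-matrix, converts these Casimirs into functions whose restrictions to $\mathfrak{b}_-^*$ Poisson-commute in pairs, and by $G$-invariance of the $f_i$ these restrictions correspond, under the identification of the previous paragraph, precisely to $\overline{f}_1,\ldots,\overline{f}_r$; their Hamiltonian flows are the Lax equations of the Toda hierarchy. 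This is \cite[Proposition 6.4]{KostantSolution}, and it is the substantive point: the main obstacle is verifying that the restriction/reduction procedure producing the symplectic form genuinely sends the Casimirs $f_i$ to commuting Hamiltonians, and that the symplectic leaf it singles out is exactly the locus \eqref{Equation: Kostant--Toda definition}.

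Finally, for the linear independence of $d\overline{f}_1,\ldots,d\overline{f}_r$ on a dense open subset I would use Kostant's description of the level sets of $\overline{F}=(\overline{f}_1,\ldots,\overline{f}_r):\mathcal{O}_{\text{KT}}\to\mathbb{C}^r$, recorded below as Theorems \ref{Theorem: Kostant fibre isomorphism} and \ref{Theorem: Kostant Hamiltonian}: every element of $\mathcal{O}_{\text{KT}}$ is regular, the map $\overline{F}$ is surjective, and each fibre $\overline{F}^{-1}(z)$ embeds as an open subset of the centraliser $G_e$ of a regular element $e\in\mathfrak{g}$ and therefore has dimension $r$. A dominant morphism of irreducible varieties $\mathcal{O}_{\text{KT}}\to\mathbb{C}^r$ all of whose fibres have dimension $\dim\mathcal{O}_{\text{KT}}-r$ is smooth on a dense open subset, so $d\overline{F}$ has rank $r$ there, i.e. $d\overline{f}_1,\ldots,d\overline{f}_r$ are linearly independent on a dense open subset of $\mathcal{O}_{\text{KT}}$. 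Combined with the Poisson-commutativity and the equality $r=\tfrac12\dim\mathcal{O}_{\text{KT}}$, this establishes that $\overline{f}_1,\ldots,\overline{f}_r$ form a completely integrable system.
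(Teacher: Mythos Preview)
The paper does not supply its own proof of this theorem: it is stated as a summary of Kostant's results, with the sentence ``we summarize the results \cite[Proposition 2.3.1, Proposition 6.4]{KostantSolution} and \cite[Proposition 4, Theorem 29]{KostantFlag} as follows'' serving in lieu of an argument. Your proposal is therefore not being compared against a proof in the paper but against those citations, and on that score it is essentially correct: you invoke the same sources (Propositions 2.3.1 and 6.4 of \cite{KostantSolution}, Proposition 4 of \cite{KostantFlag}) and sketch the coadjoint-orbit realisation and the AKS mechanism that underlie them.

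Two minor remarks. First, your third step (linear independence of the $d\overline{f}_i$) is more roundabout than necessary: you appeal to the fibre description in Theorems \ref{Theorem: Kostant fibre isomorphism} and \ref{Theorem: Kostant Hamiltonian} and then to generic smoothness, but the paper later (proof of Proposition \ref{Proposition: Dense}) records that \cite[Proposition 2.3.1]{KostantSolution} already gives $\overline{F}$ as a submersion on all of $\mathcal{O}_{\text{KT}}$, which yields the linear independence directly and everywhere, not just generically. Second, the paper's list of citations includes \cite[Theorem 29]{KostantFlag}, which you do not mention; that result packages the complete integrability statement itself, so strictly speaking one can cite it and be done.
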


We use the term \textit{Kostant--Toda lattice} for the integrable system in Theorem \ref{Theorem: Kostant's theorem}.  In the interest of making a simple observation about this system, we set $\overline{F}:=(\overline{f}_1,\ldots,\overline{f}_r):\mathcal{O}_{\text{KT}}\rightarrow\mathbb{C}^r$. The observation is then as follows.

\begin{lemma}\label{Lemma: Precisely conjugate}
If $x\in\mathcal{O}_{\emph{KT}}$, then $\overline{F}^{-1}(\overline{F}(x))$ consists precisely of those points in $\mathcal{O}_{\emph{KT}}$ that are $G$-conjugate to $x$.
\end{lemma}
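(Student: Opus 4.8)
The plan is to reduce the statement to one geometric input --- that $\mathcal{O}_{\text{KT}}$ consists of regular elements --- after which the claim follows formally from the uniqueness of Slodowy-slice representatives and the injectivity of $F\big\vert_{\mathcal{S}}$.

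First I would dispatch the elementary inclusion. If $y\in\mathcal{O}_{\text{KT}}$ is $G$-conjugate to $x$, say $y=\Adj_g(x)$ for some $g\in G$, then $G$-invariance of $f_1,\ldots,f_r$ yields $\overline{f}_i(y)=f_i(\Adj_g(x))=f_i(x)=\overline{f}_i(x)$ for every $i$, so $y\in\overline{F}^{-1}(\overline{F}(x))$. This uses nothing beyond the definitions.

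For the reverse inclusion I would first record that $\mathcal{O}_{\text{KT}}\subseteq\mathfrak{g}_{\text{reg}}$; this is part of Kostant's analysis of the Kostant--Toda lattice (see \cite{KostantSolution}). Granting it, let $y\in\overline{F}^{-1}(\overline{F}(x))$ be arbitrary, so that $y\in\mathcal{O}_{\text{KT}}\subseteq\mathfrak{g}_{\text{reg}}$ and $f_i(y)=f_i(x)$ for all $i$. By the discussion of $\mathcal{S}$ preceding Proposition \ref{Proposition: Symplectic subvariety}, the vectors $x$ and $y$ are $G$-conjugate to unique vectors $s_x,s_y\in\mathcal{S}$, respectively. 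Since the $f_i$ are $G$-invariant, $f_i(s_x)=f_i(x)=f_i(y)=f_i(s_y)$ for all $i$, i.e. $F\big\vert_{\mathcal{S}}(s_x)=F\big\vert_{\mathcal{S}}(s_y)$; as $F\big\vert_{\mathcal{S}}:\mathcal{S}\xrightarrow{\cong}\mathbb{C}^r$ from \eqref{Equation: Variety isomorphism} is injective, this forces $s_x=s_y$. Hence $x$ and $y$ both lie in the $G$-orbit of $s_x=s_y$, so $y$ is $G$-conjugate to $x$. Since $y\in\overline{F}^{-1}(\overline{F}(x))$ was arbitrary, the two inclusions together prove the lemma.

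The only substantive ingredient is the regularity statement $\mathcal{O}_{\text{KT}}\subseteq\mathfrak{g}_{\text{reg}}$, and I expect this to be the sole real obstacle; everything else is bookkeeping with facts already recalled in the paper. If a self-contained argument is preferred, one may exploit the inclusion $\mathcal{O}_{\text{KT}}\subseteq\xi+\mathfrak{b}$ together with the fact that $\xi$ is a regular nilpotent element, and check that every vector in $\xi+\mathfrak{b}$ is regular: for $\mathfrak{g}=\mathfrak{sl}_n$ this is the classical statement that an upper Hessenberg matrix with nonvanishing subdiagonal entries is non-derogatory, and the general case proceeds along the same lines using the principal grading of $\mathfrak{g}$ attached to the $\mathfrak{sl}_2$-triple $(\xi,h,\eta)$.
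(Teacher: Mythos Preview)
Your proof is correct and follows essentially the same line as the paper's: both reduce to the regularity inclusion $\mathcal{O}_{\text{KT}}\subseteq\xi+\mathfrak{b}\subseteq\mathfrak{g}_{\text{reg}}$ and then invoke Kostant's classification of regular orbits by invariant values. The only cosmetic difference is that the paper cites this classification directly as \cite[Theorem 2]{KostantLie}, whereas you unpack it via the Slodowy slice and the injectivity of $F\big\vert_{\mathcal{S}}$---which has the mild advantage of using only facts already recalled in the paper.
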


\begin{proof}
Fix $x\in\mathcal{O}_{\text{KT}}$. Since $\xi+\mathfrak{b}\subseteq\mathfrak{g}_{\text{reg}}$ (see \cite[Lemma 10]{KostantLie}) and $\mathcal{O}_{\text{KT}}\subseteq\xi+\mathfrak{b}$, we have the inclusion $\mathcal{O}_{\text{KT}}\subseteq\mathfrak{g}_{\text{reg}}$. It then follows from \cite[Theorem 2]{KostantLie} that $y\in\mathcal{O}_{\text{KT}}$ is $G$-conjugate to $x$ if and only if $f_i(x)=f_i(y)$ for all $i\in\{1,\ldots,r\}$. The latter condition is precisely the statement that $\overline{F}(x)=\overline{F}(y)$, i.e. $y\in\overline{F}^{-1}(\overline{F}(x))$. This completes the proof.  
\end{proof}

\subsection{Some technical results}\label{Subsection: Some technical results}
We now introduce two important subsets of $\mathfrak{t}$. The first is the open subset $\mathfrak{t}_{\text{reg}}:=\mathfrak{g}\cap\mathfrak{g}_{\text{reg}}$, which is also described by $$\mathfrak{t}_{\text{reg}}=\{x\in\mathfrak{t}:\alpha(x)\neq 0\text{ for all }\alpha\in\mathcal{R}\}.$$ The second is the open subset of $\mathfrak{t}$ defined by
$$\mathcal{C}:=\{x\in\mathfrak{t}:\mathrm{Re}(\alpha(x))>0\text{ for all }\alpha\in\Delta\}.$$ Given $x\in\mathcal{C}$, one readily verifies that $\mathrm{Re}(\alpha(x))>0$ for all $\alpha\in\mathcal{R}_{+}$ and $\mathrm{Re}(\alpha(x))<0$ for all $\alpha\in\mathcal{R}_{-}$. This establishes that $\alpha(x)\neq 0$ for all $\alpha\in\mathcal{R}$, so that $\mathcal{C}\subseteq\mathfrak{t}_{\text{reg}}$.

Equipped with the preceding discussion, we may describe the image $\mathcal{D}:=F(\mathcal{C})$ of $\mathcal{C}$ under $F=(f_1,\ldots,f_r):\mathfrak{g}\rightarrow\mathbb{C}^r$.

\begin{lemma}\label{Lemma: Open dense}
The subset $\mathcal{D}$ is open and dense in $\mathbb{C}^r$.
\end{lemma}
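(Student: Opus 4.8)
The plan is to exhibit $\mathcal{D} = F(\mathcal{C})$ as the image of a natural open set under a map that is known to be, in the appropriate sense, an open map with well-understood fibres, and then to argue density by a dimension-count / Zariski-closure argument. More precisely, I would proceed as follows. First, recall that the restriction $F\big\vert_{\mathfrak{t}}:\mathfrak{t}\to\mathbb{C}^r$ factors through the adjoint quotient and, by Chevalley restriction, induces an isomorphism $\mathfrak{t}/W \xrightarrow{\cong} \mathbb{C}^r$; equivalently, $F\big\vert_{\mathfrak{t}}$ is a finite surjective morphism of affine varieties that is the quotient by the (finite) Weyl group action. A finite surjective morphism between irreducible varieties of the same dimension is both closed and, on the smooth locus, open in the analytic topology where it is a local biholomorphism — in particular $F\big\vert_{\mathfrak{t}_{\text{reg}}}:\mathfrak{t}_{\text{reg}}\to\mathbb{C}^r$ is an open map (the $W$-action on $\mathfrak{t}_{\text{reg}}$ is free, so this restriction is an unramified finite covering onto its image, which is the open set $F(\mathfrak{t}_{\text{reg}})$). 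Since $\mathcal{C}\subseteq\mathfrak{t}_{\text{reg}}$ is open, its image $\mathcal{D}=F(\mathcal{C})$ is therefore open in $\mathbb{C}^r$.

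For density, the key point is that $\mathcal{C}$ is a nonempty open subset of the irreducible variety $\mathfrak{t}$ (it is nonempty because the simple roots $\Delta$ form a basis of $\mathfrak{t}^*$, so one can solve $\mathrm{Re}(\alpha(x))>0$ for all $\alpha\in\Delta$ simultaneously), hence $\mathcal{C}$ is Zariski-dense in $\mathfrak{t}$. Its image under the dominant morphism $F\big\vert_{\mathfrak{t}}$ is then Zariski-dense in $\mathbb{C}^r$: concretely, $\overline{F(\mathcal{C})}^{\text{Zar}}\supseteq F(\overline{\mathcal{C}}^{\text{Zar}})=F(\mathfrak{t})=\mathbb{C}^r$ is false as stated — rather one argues that any polynomial vanishing on $F(\mathcal{C})$ pulls back to a $W$-invariant polynomial vanishing on the Zariski-dense set $\mathcal{C}$, hence is zero, so $F(\mathcal{C})$ is Zariski-dense and a fortiori Euclidean-dense in $\mathbb{C}^r$. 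Combining this with the openness established above gives that $\mathcal{D}$ is open and dense.

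An alternative, perhaps cleaner, route to density avoids the Weyl group entirely: one observes that $\mathcal{C}$ contains a nonempty open cone (indeed a translate of an open convex cone by a real subspace), so $\mathcal{C}$ is "large" enough that its image already detects all polynomial relations; then density follows as above. I would also double-check the openness claim directly rather than through generalities: since $F\big\vert_{\mathcal{S}}:\mathcal{S}\to\mathbb{C}^r$ is an isomorphism by \eqref{Equation: Variety isomorphism} and every element of $\mathfrak{t}_{\text{reg}}\subseteq\mathfrak{g}_{\text{reg}}$ is $G$-conjugate to a unique point of $\mathcal{S}$, one gets that $F(x)$ for $x\in\mathfrak{t}_{\text{reg}}$ is recovered as $F\big\vert_{\mathcal{S}}$ evaluated at the unique slice representative, and this identification is holomorphic; the map $\mathfrak{t}_{\text{reg}}\to\mathcal{S}$ sending $x$ to its slice representative has surjective differential (a standard transversality fact for the Slodowy slice, or equivalently because $F\big\vert_{\mathfrak{t}_{\text{reg}}}$ has everywhere-nonvanishing Jacobian on $\mathfrak{t}_{\text{reg}}$, as the $f_i$ have independent differentials at regular elements by \cite[Theorem 9]{KostantLie}), so $F\big\vert_{\mathfrak{t}_{\text{reg}}}$ is a submersion onto $\mathbb{C}^r$ and hence open. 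I expect the main obstacle to be purely expository: being careful about which topology ("open" and "dense" here mean Euclidean, per the conventions in Section \ref{Section: Notation and conventions}) and cleanly separating the openness argument (local-submersion / finite-covering) from the density argument (Zariski-density of a nonempty open subset of an irreducible variety pushed forward along a dominant map).
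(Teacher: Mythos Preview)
Your argument is correct. For openness you and the paper do essentially the same thing: both reduce to showing that $F\big\vert_{\mathfrak{t}_{\text{reg}}}$ is a local biholomorphism and hence open, you via the free $W$-action / Chevalley picture (and your alternative via independence of the $d f_i$ at regular points is exactly the paper's route, which checks that $T_x(\mathfrak{t}_{\text{reg}})$ is complementary to $\ker d_xF = T_x(Gx)$). For density the approaches genuinely diverge. The paper exploits the explicit fundamental domain
\[
\mathcal{C}_{\mathfrak{t}}=\{x\in\mathfrak{t}:\forall\alpha\in\Delta,\ \mathrm{Re}(\alpha(x))\geq 0\ \text{and}\ (\mathrm{Re}(\alpha(x))=0\Rightarrow\mathrm{Im}(\alpha(x))\geq 0)\},
\]
observes $\mathcal{C}_{\mathfrak{t}}\subseteq\overline{\mathcal{C}}$, and then uses continuity of $F$ together with $F(\mathcal{C}_{\mathfrak{t}})=F(\mathfrak{t})=\mathbb{C}^r$ to conclude $\overline{\mathcal{D}}=\mathbb{C}^r$ directly in the Euclidean topology. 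Your route instead passes through Zariski density: a nonempty Euclidean-open subset of $\mathfrak{t}$ is Zariski-dense, and dominance of $F\big\vert_{\mathfrak{t}}$ then forces $F(\mathcal{C})$ to be Zariski-dense (hence Euclidean-dense) in $\mathbb{C}^r$. The paper's argument is more hands-on and stays entirely in the analytic topology, while yours is the standard algebro-geometric move and requires no knowledge of the specific shape of $\mathcal{C}$ beyond nonemptiness; either is perfectly adequate here.
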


\begin{proof}
We begin by recalling the following fundamental domain $\mathcal{C}_{\mathfrak{t}}\subseteq\mathfrak{t}$ for the action of $W$ on $\mathfrak{t}$ (see \cite[Section 2.2]{Collingwood}):
$$\mathcal{C}_{\mathfrak{t}}:=\bigg\{x\in\mathfrak{t}:\forall\alpha\in\Delta,\text{ }\mathrm{Re}(\alpha(x))\geq 0 \text{ and }\\ \bigg(\mathrm{Re}(\alpha(x))=0\Rightarrow\mathrm{Im}(\alpha(x))\geq 0\bigg)\bigg\}.$$ Now let $\overline{\mathcal{C}}$ denote the closure of $\mathcal{C}$ in $\mathfrak{g}$, and observe that $\mathcal{C}_{\mathfrak{t}}\subseteq\overline{\mathcal{C}}$. The continuity of $F$ then implies that $$F(\mathcal{C}_{\mathfrak{t}})\subseteq F(\overline{\mathcal{C}})\subseteq \overline{F(\mathcal{C})}=\overline{\mathcal{D}},$$ with the last two closures taken in $\mathbb{C}^r$. We also know that $$F(\mathcal{C}_{\mathfrak{t}})=F(\mathfrak{t})=\mathbb{C}^r,$$ where the second equality is established in the proof of \cite[Proposition 10]{KostantLie} and the first equality comes from the following two things: the $W$-invariance of $F\big\vert_{\mathfrak{t}}$ and the fact that $\mathcal{C}_{\mathfrak{t}}$ is a fundamental domain. We conclude that $\overline{\mathcal{D}}=\mathbb{C}^r$, i.e. $\mathcal{D}$ is dense in $\mathbb{C}^r$.

We now prove that $\mathcal{D}$ is open. Accordingly, note that $F\big\vert_{\mathfrak{g}_{\text{reg}}}:\mathfrak{g}_{\text{reg}}\rightarrow\mathbb{C}^r$ is a submersion (see \cite[Theorem 9]{KostantLie}) whose level sets are precisely the $G$-orbits in $\mathfrak{g}_{\text{reg}}$ (see \cite[Theorem 3]{KostantLie}). It follows that $\mathrm{ker}(d_x F)=T_x(Gx)$ for all $x\in\mathfrak{g}_{\text{reg}}$. At the same time, it is not difficult to verify that $T_x(\mathfrak{t}_{\text{reg}})$ and $T_x(Gx)$ are complementary subspaces of $\mathfrak{g}$ for all $x\in\mathfrak{t}_{\text{reg}}$. These last two sentences imply that $F\big\vert_{\mathfrak{t}_{\text{reg}}}:\mathfrak{t}_{\text{reg}}\rightarrow\mathbb{C}^r$ is a local biholomorphism. Since $\mathcal{C}$ is an open subset of $\mathfrak{t}_{\text{reg}}$, it follows that $F(\mathcal{C})=\mathcal{D}$ is open in $\mathbb{C}^r$. 
\end{proof}  

We now consider the open subset of $\mathcal{O}_{\mathrm{KT}}$ given by \begin{equation}\label{Equation: Definition of open Kostant-Toda}\mathcal{V}:=\overline{F}^{-1}(\mathcal{D})=F^{-1}(\mathcal{D})\cap\mathcal{O}_{\text{KT}},\end{equation} about which the following is true.

\begin{proposition}\label{Proposition: Dense}
The open set $\mathcal{V}$ is dense in $\mathcal{O}_{\emph{KT}}$.
\end{proposition}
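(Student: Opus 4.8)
The plan is to show that the complement of $\mathcal{V}$ in $\mathcal{O}_{\text{KT}}$ is contained in a proper closed analytic subset, or — more directly — to exhibit a dense subset of $\mathcal{O}_{\text{KT}}$ lying inside $\mathcal{V}$. Since $\mathcal{V} = \overline{F}^{-1}(\mathcal{D}) = F^{-1}(\mathcal{D}) \cap \mathcal{O}_{\text{KT}}$ and $\mathcal{D}$ is open dense in $\mathbb{C}^r$ by Lemma \ref{Lemma: Open dense}, the natural approach is to transport the density of $\mathcal{D}$ upstairs along $\overline{F}$. The key point that makes this work is that $\overline{F}: \mathcal{O}_{\text{KT}} \to \mathbb{C}^r$ is an \emph{open} map: indeed $\mathcal{O}_{\text{KT}} \subseteq \mathfrak{g}_{\text{reg}}$ (as noted in the proof of Lemma \ref{Lemma: Precisely conjugate}), and $F\big\vert_{\mathfrak{g}_{\text{reg}}}$ is a submersion by \cite[Theorem 9]{KostantLie}, hence an open map; restricting a submersion to an open subset keeps it a submersion, and one checks that $F$ restricted to $\mathcal{O}_{\text{KT}}$ is still a submersion — this follows because $T_x\mathcal{O}_{\text{KT}}$ surjects onto a complement of $T_x(Gx) = \ker(d_xF)$ in $\mathfrak{g}$, which in turn follows from the fact that $\mathcal{O}_{\text{KT}}$ already meets every regular $G$-orbit it can meet transversally enough (alternatively, since $\overline{F}$ is a completely integrable system on the symplectic manifold $\mathcal{O}_{\text{KT}}$ by Theorem \ref{Theorem: Kostant's theorem}, its differential is generically of full rank, but we want openness everywhere; the submersion property from the Kostant--Toda setup gives this).

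The main body of the argument then runs as follows. First I would establish that $\overline{F}: \mathcal{O}_{\text{KT}} \to \mathbb{C}^r$ is an open map; as indicated above, this reduces to checking that $F$ is submersive at every point of $\mathcal{O}_{\text{KT}}$, which I would deduce from $\mathcal{O}_{\text{KT}} \subseteq \mathfrak{g}_{\text{reg}}$ together with the transversality of $T_x\mathcal{O}_{\text{KT}}$ to the $G$-orbit directions — essentially the same complementarity argument used for $\mathfrak{t}_{\text{reg}}$ in the proof of Lemma \ref{Lemma: Open dense}. Second, given any nonempty open $\mathcal{U} \subseteq \mathcal{O}_{\text{KT}}$, its image $\overline{F}(\mathcal{U})$ is open and nonempty in $\mathbb{C}^r$; since $\mathcal{D}$ is dense, $\overline{F}(\mathcal{U}) \cap \mathcal{D} \neq \emptyset$, so there is a point $x \in \mathcal{U}$ with $\overline{F}(x) \in \mathcal{D}$, i.e. $x \in \mathcal{V} \cap \mathcal{U}$. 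Hence $\mathcal{V}$ meets every nonempty open subset of $\mathcal{O}_{\text{KT}}$, which is precisely the statement that $\mathcal{V}$ is dense.

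An alternative route, in case the submersion claim is cleaner to state for $F\big\vert_{\mathfrak{t}_{\text{reg}}}$ than for $F\big\vert_{\mathcal{O}_{\text{KT}}}$, is to use the torus-translation structure of $\mathcal{O}_{\text{KT}}$: Kostant's description realizes each fibre of $\overline{F}$ over a point of $\mathcal{D}$ as a nonempty open subset of a centralizer, and the fibres vary in a way that sweeps out a dense set precisely because the base points $\mathcal{D}$ are dense. I would only fall back on this if the direct openness argument hits a snag.

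\textbf{Expected main obstacle.} The only genuine subtlety is verifying that $F$ (equivalently $\overline{F}$) is a submersion — hence an open map — at \emph{every} point of $\mathcal{O}_{\text{KT}}$, not merely on a dense subset. Complete integrability (Theorem \ref{Theorem: Kostant's theorem}) only guarantees generic full rank of $d\overline{F}$, which is not enough for the openness argument as stated; one must instead invoke $\mathcal{O}_{\text{KT}} \subseteq \mathfrak{g}_{\text{reg}}$ and the global submersivity of $F\big\vert_{\mathfrak{g}_{\text{reg}}}$, then confirm that the restriction to the locally closed submanifold $\mathcal{O}_{\text{KT}}$ remains submersive — i.e. that $T_x\mathcal{O}_{\text{KT}} + \ker(d_xF) = \mathfrak{g}$ for all $x \in \mathcal{O}_{\text{KT}}$. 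This transversality is the technical heart of the proof; once it is in hand, everything else is formal.
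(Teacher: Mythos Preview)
Your approach is correct and essentially identical to the paper's: show $\overline{F}$ is an open map, then pull back the density of $\mathcal{D}$ along it. The only difference is that where you flag the everywhere-submersivity of $\overline{F}:\mathcal{O}_{\text{KT}}\to\mathbb{C}^r$ as the main technical obstacle and propose to derive it via the transversality $T_x\mathcal{O}_{\text{KT}}+\ker(d_xF)=\mathfrak{g}$, the paper simply cites Kostant \cite[Proposition 2.3.1]{KostantSolution} for this fact directly; so your anticipated difficulty is already handled in the literature and no further work is needed.
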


\begin{proof}
It follows from \cite[Proposition 2.3.1]{KostantSolution} that $\overline{F}:\mathcal{O}_{\text{KT}}\rightarrow\mathbb{C}^r$ is a submersion, and hence also an open map. In particular, the preimage of a dense subset under $\overline{F}$ is necessarily dense. Lemma \ref{Lemma: Open dense} and \eqref{Equation: Definition of open Kostant-Toda} now combine to imply the desired result.
\end{proof}

We devote the balance of \ref{Subsection: Some technical results} to the study of two holomorphic maps on $\mathcal{V}$. The first such map involves the translated subset $\xi+\mathcal{C}\subseteq\mathfrak{g}$ and is described as follows.

\begin{lemma}\label{Lemma: Conjugate}
There is a holomorphic map $\theta:\mathcal{V}\rightarrow\xi+\mathcal{C}$ with the following property: if $x\in\mathcal{V}$, then $\theta(x)$ is the unique element of $\xi+\mathcal{C}$ that is $G$-conjugate to $x$.
\end{lemma}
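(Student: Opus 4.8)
The plan is to construct $\theta$ by composing an explicit section of $F\big\vert_{\xi+\mathcal{C}}$ with the already-understood map $\overline{F}:\mathcal{V}\to\mathcal{D}$. First I would observe that the translated subset $\xi+\mathcal{C}$ consists of regular elements, since $\mathcal{C}\subseteq\mathfrak{t}\subseteq\mathfrak{b}$ and $\xi+\mathfrak{b}\subseteq\mathfrak{g}_{\text{reg}}$ (as used already in the proof of Lemma \ref{Lemma: Precisely conjugate}). Next I would argue that the restriction $F\big\vert_{\xi+\mathcal{C}}:\xi+\mathcal{C}\to\mathbb{C}^r$ is a biholomorphism onto its image $\mathcal{D}=F(\mathcal{C})$. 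Indeed, $\xi+\mathcal{C}$ is a translate of the open set $\mathcal{C}\subseteq\mathfrak{t}_{\text{reg}}$, and each point $\xi+t$ with $t\in\mathcal{C}$ is $G$-conjugate to $t$ (via the unipotent element realizing the Jordan-type decomposition of $\xi+t$, an element of $U$ depending holomorphically on $t$, in the spirit of \cite[Proposition 2.3.1]{KostantSolution}), so $F(\xi+t)=F(t)$ and hence $F(\xi+\mathcal{C})=F(\mathcal{C})=\mathcal{D}$. The map $F\big\vert_{\xi+\mathcal{C}}$ is injective because $\xi+\mathcal{C}\subseteq\mathfrak{g}_{\text{reg}}$ together with \cite[Theorem 2]{KostantLie} shows two elements of $\xi+\mathcal{C}$ with the same $F$-value are $G$-conjugate, while $t\mapsto\xi+t$ and conjugacy of $\xi+s,\xi+t$ forces $W$-conjugacy of $s,t$ in $\mathfrak{t}_{\text{reg}}$, which on the fundamental-domain-type set $\mathcal{C}$ means $s=t$. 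It is a local biholomorphism by the same submersion argument as in the proof of Lemma \ref{Lemma: Open dense} applied to the open set $\xi+\mathcal{C}$ of regular elements (translation does not affect the complementarity of $T_x(\mathfrak{t}_{\text{reg}})$ and $T_x(Gx)$, and $F\big\vert_{\mathfrak{g}_{\text{reg}}}$ is a submersion with $G$-orbit level sets). A bijective local biholomorphism is a biholomorphism, so I obtain a holomorphic inverse $(F\big\vert_{\xi+\mathcal{C}})^{-1}:\mathcal{D}\xrightarrow{\cong}\xi+\mathcal{C}$.

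Then I would simply define
$$\theta:=\big(F\big\vert_{\xi+\mathcal{C}}\big)^{-1}\circ\overline{F}\big\vert_{\mathcal{V}}:\mathcal{V}\longrightarrow\xi+\mathcal{C},$$
which is holomorphic as a composite of holomorphic maps (note $\overline{F}(\mathcal{V})\subseteq\mathcal{D}$ by \eqref{Equation: Definition of open Kostant-Toda}). To verify the stated property, fix $x\in\mathcal{V}$ and set $w:=\theta(x)\in\xi+\mathcal{C}$. By construction $F(w)=\overline{F}(x)=F(x)$, and since both $x$ and $w$ lie in $\mathcal{O}_{\text{KT}}\cup(\xi+\mathcal{C})\subseteq\mathfrak{g}_{\text{reg}}$, \cite[Theorem 2]{KostantLie} gives that $w$ is $G$-conjugate to $x$. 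For uniqueness, if $w'\in\xi+\mathcal{C}$ is also $G$-conjugate to $x$, then $F(w')=F(x)=F(w)$, and injectivity of $F\big\vert_{\xi+\mathcal{C}}$ yields $w'=w$.

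The main obstacle is the verification that $F\big\vert_{\xi+\mathcal{C}}$ is injective — more precisely, showing that $G$-conjugacy of two elements $\xi+s$ and $\xi+t$ with $s,t\in\mathcal{C}$ forces $s=t$. This reduces to the statement that $\mathcal{C}$ meets each $W$-orbit in $\mathfrak{t}$ at most once, which follows from the fact (used already in the proof of Lemma \ref{Lemma: Open dense}) that $\mathcal{C}_{\mathfrak{t}}$ is a fundamental domain for $W$ on $\mathfrak{t}$ and $\mathcal{C}$ is contained in its interior-type locus $\{x:\operatorname{Re}(\alpha(x))>0\ \forall\alpha\in\Delta\}$, on which $W$ acts freely with distinct orbit representatives; combined with the Kostant--conjugacy fact that two regular elements of $\mathfrak{g}$ are $G$-conjugate iff their semisimple parts are $W$-conjugate in $\mathfrak{t}$, and that the semisimple part of $\xi+t$ is ($G$-, hence after moving into $\mathfrak{t}$) conjugate to $t$. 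All of this is standard structure theory, but it is the one place where care is required; everything else is formal manipulation of the already-established submersion and isomorphism statements.
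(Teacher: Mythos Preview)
Your proposal is correct and follows essentially the same route as the paper: both show that $F\big\vert_{\xi+\mathcal{C}}:\xi+\mathcal{C}\to\mathcal{D}$ is a biholomorphism, define $\theta$ as its inverse composed with $F$, and deduce the conjugacy/uniqueness property from the fact that regular elements with the same $F$-value are $G$-conjugate. The only cosmetic differences are that the paper obtains $F(\xi+t)=F(t)$ by a direct citation (\cite[Corollary 3.1.43]{Chriss}) rather than via your conjugacy argument, and the paper's injectivity argument reduces immediately to $F(x_1)=F(x_2)$ for $x_1,x_2\in\mathcal{C}$ rather than passing through $G$-conjugacy of $\xi+x_1$ and $\xi+x_2$; your extra local-biholomorphism step is harmless but unnecessary, since an injective holomorphic surjection between equidimensional complex manifolds is automatically a biholomorphism.
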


\begin{proof}
By \cite[Corollary 3.1.43]{Chriss}, we have $$F(\xi+\mathcal{C})=F(\mathcal{C})=\mathcal{D}.$$ It follows that $F$ restricts to a surjective holomorphic map $F_{\mathcal{C}}:\xi+\mathcal{C}\rightarrow\mathcal{D}$, and we claim that $F_{\mathcal{C}}$ is a biholomorphism. Note that it suffices to establish injectivity.

Suppose that $x_1,x_2\in\mathcal{C}$ satisfy $F(\xi+x_1)=F(\xi+x_2)$. It then follows from \cite[Corollary 3.1.43]{Chriss} that $F(x_1)=F(x_2)$. Since $x_1,x_2\in\mathfrak{g}_{\text{reg}}$, this implies that $x_1$ and $x_2$ are $G$-conjugate (see \cite[Theorem 3]{KostantLie}). Note also that $x_1,x_2\in\mathfrak{t}$, so that they must actually be $W$-conjugate. At the same time, $x_1$ and $x_2$ belong to the fundamental domain $\mathcal{C}_{\mathfrak{t}}$ introduced in the proof of Lemma \ref{Lemma: Open dense}. We conclude that $x_1=x_2$, proving injectivity. As discussed above, this shows $F_{\mathcal{C}}$ to be a biholomorphism.  

Let us now consider the holomorphic map $$\theta:\mathcal{V}\rightarrow\xi+\mathcal{C},\quad x\mapsto F_{\mathcal{C}}^{-1}(F(x)),\quad x\in\mathcal{V}.$$  Note that for all $x\in\mathcal{V}$, we have $F(\theta(x))=F(x)$. Since $x,\theta(x)\in\mathfrak{g}_{\text{reg}}$ (by \cite[Lemma 10]{KostantLie}), we conclude that $x$ and $\theta(x)$ are $G$-conjugate (see \cite[Theorem 3]{Kostant}). Now let $y\in\xi+\mathcal{C}$ be $G$-conjugate to $x$, observing that $F(y)=F(x)=F(\theta(x))$. It follows that $y=\theta(x)$, as $F_{\mathcal{C}}$ is a bijection. This completes the proof.
\end{proof}

To construct a second holomorphic map on $\mathcal{V}$, we recall Kostant's variety isomorphism
\begin{equation}\label{Equation: Kostant variety isomorphism}\psi: U\times\mathcal{S}\xrightarrow{\cong} \xi+\mathfrak{b},\quad (u,x)\mapsto\Adj_u(x),\quad (u,x)\in U\times\mathcal{S}\end{equation} (see \cite[Theorem 1.2]{KostantWhittaker}). Note that $\mathcal{V}$ and $\xi+\mathcal{C}$ are both subsets of $\xi+\mathfrak{b}$, so that we may apply $\psi^{-1}$ to elements of $\mathcal{V}$ and $\xi+\mathcal{C}$. Letting $\pi_U:U\times\mathcal{S}\rightarrow U$ be the projection to the first factor, we define our second holomorphic map on $\mathcal{V}$ by
$$\nu:\mathcal{V}\rightarrow U,\quad x\mapsto \pi_U\bigg(\psi^{-1}(x)\bigg)\pi_U\bigg(\psi^{-1}(\theta(x))\bigg)^{-1},\quad x\in\mathcal{V}.$$ 

\begin{lemma}\label{Lemma: Description of nu}
If $x\in\mathcal{V}$, then $\nu(x)$ is the unique element of $U$ satisfying $\Adj_{\nu(x)}(\theta(x))=x$.
\end{lemma}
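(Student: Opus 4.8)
The plan is to unwind the definition of $\nu$ using Kostant's isomorphism $\psi$ and the key property of $\theta$ from Lemma \ref{Lemma: Conjugate}, namely that $\theta(x)$ is $G$-conjugate to $x$ and lies in $\xi+\mathcal{C}\subseteq\xi+\mathfrak{b}$. First I would write $\psi^{-1}(x)=(u_1,s_1)$ and $\psi^{-1}(\theta(x))=(u_2,s_2)$ with $u_1,u_2\in U$ and $s_1,s_2\in\mathcal{S}$, so that by the very definition \eqref{Equation: Kostant variety isomorphism} we have $x=\Adj_{u_1}(s_1)$ and $\theta(x)=\Adj_{u_2}(s_2)$, while $\nu(x)=u_1u_2^{-1}$. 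The goal $\Adj_{\nu(x)}(\theta(x))=x$ then becomes $\Adj_{u_1u_2^{-1}}(\Adj_{u_2}(s_2))=\Adj_{u_1}(s_2)\overset{?}{=}\Adj_{u_1}(s_1)$, which holds precisely when $s_1=s_2$.

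The crux is therefore to show $s_1=s_2$, i.e.\ that $x$ and $\theta(x)$ have the same Slodowy-slice component under $\psi^{-1}$. This follows from the fact that $x$ and $\theta(x)$ are $G$-conjugate (Lemma \ref{Lemma: Conjugate}) together with the uniqueness clause in Kostant's structure theory for $\mathfrak{g}_{\mathrm{reg}}$: since $x=\Adj_{u_1}(s_1)$ is $G$-conjugate to $s_1\in\mathcal{S}$ and $\theta(x)=\Adj_{u_2}(s_2)$ is $G$-conjugate to $s_2\in\mathcal{S}$, and $x$ is $G$-conjugate to $\theta(x)$, we get that $s_1$ and $s_2$ are two elements of $\mathcal{S}$ that are $G$-conjugate to one another; by the uniqueness statement recalled before Proposition \ref{Proposition: Symplectic subvariety} (each regular element is $G$-conjugate to a \emph{unique} vector in $\mathcal{S}$), this forces $s_1=s_2$. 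Plugging this back in gives $\Adj_{\nu(x)}(\theta(x))=x$, establishing existence of the claimed element.

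For uniqueness, suppose $u\in U$ also satisfies $\Adj_u(\theta(x))=x$. Then $\Adj_u(\Adj_{u_2}(s_2))=\Adj_{u_1}(s_1)=\Adj_{u_1}(s_2)$, so $\Adj_{u_1^{-1}uu_2}(s_2)=s_2$, i.e.\ $u_1^{-1}uu_2\in G_{s_2}\cap U$. Since $s_2\in\mathcal{S}\subseteq\mathfrak{g}_{\mathrm{reg}}$, the stabilizer $G_{s_2}$ is abelian of dimension $r$ and, by the faithfulness of $\psi$ (or directly because $\psi$ is injective: $(u_1^{-1}uu_2,s_2)$ and $(e,s_2)$ map to the same point $\Adj_{u_1^{-1}uu_2}(s_2)=s_2=\Adj_e(s_2)$ under $\psi$), we conclude $u_1^{-1}uu_2=e$, hence $u=u_1u_2^{-1}=\nu(x)$. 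I expect the main obstacle to be citing the right uniqueness input cleanly---specifically pinning down that $G_{s_2}\cap U=\{e\}$, which is most transparently deduced from the injectivity of $\psi$ in \eqref{Equation: Kostant variety isomorphism} rather than from a separate structural fact about centralizers of regular elements.
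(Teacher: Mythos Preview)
Your proof is correct and follows essentially the same route as the paper: both arguments write $\psi^{-1}(x)=(u_1,s_1)$ and $\psi^{-1}(\theta(x))=(u_2,s_2)$, use Lemma~\ref{Lemma: Conjugate} together with the uniqueness of the $\mathcal{S}$-representative of a regular adjoint orbit to force $s_1=s_2$, and then read off $\Adj_{\nu(x)}(\theta(x))=x$. The only difference is in the uniqueness clause: the paper cites \cite[Proposition 2.3.2]{KostantSolution} directly, whereas you extract it from the injectivity of $\psi$ in \eqref{Equation: Kostant variety isomorphism} via the observation that $\psi(u_1^{-1}uu_2,s_2)=\psi(e,s_2)$; your argument is self-contained given what the paper has already set up, so this is a mild improvement in exposition rather than a genuinely different method.
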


\begin{proof}
Let us write $\psi^{-1}(x)=(u_1,\overline{x})$ and $\psi^{-1}(\theta(x))=(u_2,\overline{\theta(x)})$, so that \begin{equation}\label{Equation: U-conjugacy}x=\Adj_{u_1}(\overline{x})\quad\text{and}\quad\theta(x)=\Adj_{u_2}(\overline{\theta(x)}).\end{equation} Kostant's result \cite[Theorem 8]{KostantLie} then implies that $\overline{x}$ (resp. $\overline{\theta(x)}$) is the unique element of $\mathcal{S}$ with the property of being $G$-conjugate to $x$ (resp. $\theta(x)$). Since $x$ and $\theta(x)$ are themselves $G$-conjugate (by Lemma \ref{Lemma: Conjugate}), it follows that $\overline{x}=\overline{\theta(x)}$. We may therefore use \eqref{Equation: U-conjugacy} to conclude that $$x=\Adj_{u_1u_2^{-1}}(\theta(x)).$$ Now observe that $u_1=\pi_U(\psi^{-1}(x))$ and $u_2=\pi_U(\psi^{-1}(\theta(x)))$, so that $$u_1u_2^{-1}= \pi_U\bigg(\psi^{-1}(x)\bigg)\pi_U\bigg(\psi^{-1}(\theta(x))\bigg)^{-1}=\nu(x)$$ and
$$x=\Adj_{\nu(x)}(\theta(x)).$$ The uniqueness of $\nu(x)$ is a consequence of \cite[Proposition 2.3.2]{KostantSolution}.
\end{proof}

\subsection{Level sets}\label{Subsection: Level sets}
The map $\theta:\mathcal{V}\rightarrow\xi+\mathcal{C}$ facilitates a convenient description of Kostant's results \cite{KostantSolution} on the level sets $\overline{F}^{-1}(z)$. To this end, let $w_0\in W$ denote the longest element of the Weyl group and recall the root vectors fixed in \ref{Subsection: Outline of results}. Note that there exists a unique lift $\widetilde{w_0}\in N_G(T)$ of $w_0$ that satisfies $$\Adj_{\widetilde{w_0}}(e_{\alpha})=e_{w_0\alpha}$$ for all $\alpha\in\Delta$. Now consider the open subset $U_{-}TU$ of $G$, as well as its left $\widetilde{w_0}$-translate
\begin{equation}\label{Equation: Translate} G^*:=\widetilde{w_0}U_{-}TU\subseteq G.\end{equation} The map $$U_{-}\times T\times U\rightarrow G^*,\quad (u_{-},t,u)\mapsto \widetilde{w_0}u_{-}tu,\quad (u_{-},t,u)\in U_{-}\times T\times U$$ is then a variety isomorphism. Let us invert this isomorphism and project onto the factors $U_{-}$, $T$, and $U$, thereby obtaining variety morphisms
$$\sigma_{U_{-}}:G^*\rightarrow U_{-},\quad \sigma_T:G^*\rightarrow T,\quad\text{and}\quad \sigma_{U}:G^*\rightarrow U.$$ Setting $G_x^*:=G_x\cap G^*$ for all $x\in\mathfrak{g}$, one has the following rephrased version of \cite[Theorem 2.6]{KostantSolution}.  

\begin{theorem}\label{Theorem: Kostant fibre isomorphism}
If $x\in\mathcal{V}$, then 
$$\tau_{\theta(x)}:G_{\theta(x)}^*\rightarrow \overline{F}^{-1}(\overline{F}(x)),\quad g\mapsto \Adj_{\sigma_U(g)}(\theta(x)),\quad g\in G_{\theta(x)}^*$$ defines an isomorphism of varieties.
\end{theorem}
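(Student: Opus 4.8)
The plan is to show that $\tau_{\theta(x)}$ is a morphism of varieties that is both injective and surjective onto $\overline{F}^{-1}(\overline{F}(x))$, with an inverse that is also a morphism; since everything in sight is smooth, this suffices. First I would observe that $\tau_{\theta(x)}$ does land in $\overline{F}^{-1}(\overline{F}(x))$: for $g\in G_{\theta(x)}^*$, the element $\Adj_{\sigma_U(g)}(\theta(x))$ is $G$-conjugate to $\theta(x)$, hence (by Lemma \ref{Lemma: Precisely conjugate} together with Lemma \ref{Lemma: Conjugate}) it lies in $\overline{F}^{-1}(\overline{F}(x))$ \emph{provided} one checks that it actually lies in $\mathcal{O}_{\text{KT}}$; here I would use that $\Adj_{\sigma_U(g)}(\theta(x))\in\xi+\mathfrak{b}$ automatically (since $U$ preserves $\xi+\mathfrak{b}$ and fixes $\xi$ modulo $\mathfrak{u}$), and that the $\mathfrak{g}_\alpha$-components for $\alpha\in\Delta$ are nonzero because conjugating $\xi+\mathcal{C}$ by an element of $U$ does not alter the simple-root components of $\xi$ — this is exactly the content that makes $\mathcal{O}_{\text{KT}}$ a union of $G$-orbit pieces.

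Next I would construct the inverse map. Given $y\in\overline{F}^{-1}(\overline{F}(x))$, Lemma \ref{Lemma: Precisely conjugate} says $y$ is $G$-conjugate to $x$, hence to $\theta(x)$, so $y\in\mathcal{V}$ and $\theta(y)=\theta(x)$; by Lemma \ref{Lemma: Description of nu} there is a unique $\nu(y)\in U$ with $\Adj_{\nu(y)}(\theta(x))=y$. The candidate inverse sends $y$ to the unique element $g\in G_{\theta(x)}^*$ with $\sigma_U(g)=\nu(y)$. To make sense of this I need: (a) every $u\in U$ arises as $\sigma_U(g)$ for a \emph{unique} $g\in G_{\theta(x)}^*$, and (b) this correspondence is algebraic in both directions. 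Point (a) is where Kostant's structure theory enters: $\theta(x)\in\xi+\mathfrak{t}_{\text{reg}}$ is a regular semisimple-plus-principal-nilpotent type element whose centralizer $G_{\theta(x)}$ is an abelian $r$-dimensional group, and the Bruhat-type decomposition $G^*=\widetilde{w_0}U_-TU$ is arranged precisely so that $G_{\theta(x)}\cap G^*$ maps isomorphically (via $\sigma_U$) onto $U$ — this is the heart of \cite[Theorem 2.6]{KostantSolution}, and I would cite it for the bijectivity of $g\mapsto\sigma_U(g)$ on $G_{\theta(x)}^*$. Combined with the fact that $\sigma_U$, $\psi^{-1}$, $\pi_U$, $\theta$ and $\nu$ are all morphisms, this gives that $\tau_{\theta(x)}$ and its inverse are morphisms.

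Finally I would assemble the pieces: $\tau_{\theta(x)}$ is injective because $\Adj_{\sigma_U(g)}(\theta(x))=\Adj_{\sigma_U(g')}(\theta(x))$ forces $\sigma_U(g)\sigma_U(g')^{-1}\in U$ to centralize $\theta(x)$, but $U\cap G_{\theta(x)}=\{e\}$ (again by regularity of $\theta(x)$ and Kostant's uniqueness statement, cf. \cite[Proposition 2.3.2]{KostantSolution}), so $\sigma_U(g)=\sigma_U(g')$ and then $g=g'$ by injectivity of $\sigma_U$ on $G_{\theta(x)}^*$; surjectivity follows from the inverse construction, since for $y\in\overline{F}^{-1}(\overline{F}(x))$ the element $g$ with $\sigma_U(g)=\nu(y)$ satisfies $\tau_{\theta(x)}(g)=\Adj_{\nu(y)}(\theta(x))=y$. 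I expect the main obstacle to be the careful bookkeeping in step two — verifying that $\sigma_U$ restricts to a genuine isomorphism $G_{\theta(x)}^*\xrightarrow{\cong}U$ and that the resulting inverse is a morphism of varieties rather than merely a bijection of sets; this is precisely the technical core of Kostant's theorem, so the honest approach is to reduce to it cleanly rather than reprove it, and the work lies in checking that our $\theta(x)\in\xi+\mathcal{C}$ satisfies the hypotheses under which Kostant states \cite[Theorem 2.6]{KostantSolution} (namely $\theta(x)\in\xi+\mathfrak{t}_{\text{reg}}$, which holds since $\mathcal{C}\subseteq\mathfrak{t}_{\text{reg}}$).
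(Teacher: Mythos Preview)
The paper does not give its own proof of this statement: it is presented as a ``rephrased version of \cite[Theorem 2.6]{KostantSolution}'' and simply cited. Your proposal likewise ends up citing Kostant's theorem, so in that sense you are aligned with the paper. However, the intermediate scaffolding you build around the citation contains a genuine error.

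Your claim (a) in the second paragraph --- that $\sigma_U$ restricts to an isomorphism $G_{\theta(x)}^*\xrightarrow{\cong} U$ --- is false on dimension grounds. Since $\theta(x)$ is regular, $G_{\theta(x)}$ (and hence its open subset $G_{\theta(x)}^*$) has dimension $r$, while $\dim U=|\mathcal{R}_+|$, which strictly exceeds $r$ unless $\mathfrak{g}$ is a product of copies of $\mathfrak{sl}_2$. So $\sigma_U$ on $G_{\theta(x)}^*$ is at best an embedding into $U$, not a bijection onto it; your proposed inverse, which takes $y\mapsto \nu(y)\in U$ and then looks for $g\in G_{\theta(x)}^*$ with $\sigma_U(g)=\nu(y)$, therefore needs an additional argument that $\nu(y)$ actually lies in the image of $\sigma_U\big\vert_{G_{\theta(x)}^*}$. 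That is precisely the nontrivial content of Kostant's theorem, not a formal consequence of the Bruhat decomposition.

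Relatedly, your first paragraph does not correctly justify why $\tau_{\theta(x)}(g)\in\mathcal{O}_{\text{KT}}$. You write that ``conjugating $\xi+\mathcal{C}$ by an element of $U$ does not alter the simple-root components of $\xi$'', but $\xi=\sum_{\alpha\in\Delta}e_{-\alpha}$ lives in the \emph{negative} simple root spaces; membership in $\mathcal{O}_{\text{KT}}$ requires nonzero \emph{positive} simple root components and vanishing components in $\mathfrak{g}_\alpha$ for non-simple $\alpha\in\mathcal{R}_+$. For a generic $u\in U$, $\Adj_u(\theta(x))$ has nonzero components in \emph{all} positive root spaces and so does not lie in $\mathcal{O}_{\text{KT}}$ at all. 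It is the stabilizer condition $g\in G_{\theta(x)}$ --- not merely $\sigma_U(g)\in U$ --- that forces $\Adj_{\sigma_U(g)}(\theta(x))$ to land in the thin slice $\mathcal{O}_{\text{KT}}$, and establishing this is again the substance of \cite[Theorem 2.6]{KostantSolution}. The honest route, which is what the paper does, is to invoke Kostant's result directly rather than attempt to unpack it via claims that turn out to be stronger than what is true.
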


Now let $H(\overline{f}_i)$ be the Hamiltonian vector field on $\mathcal{O}_{\text{KT}}$ determined by $\overline{f}_i$, $i\in\{1,\ldots,r\}$. Since $\overline{f}_1,\ldots,\overline{f}_r$ form a completely integrable system on $\mathcal{O}_{\text{KT}}$, one knows that $H(\overline{f}_i)$ is tangent to $\overline{F}^{-1}(\overline{F}(x))$ for all $x\in\mathcal{O}_{\text{KT}}$. It follows that for each $x\in\mathcal{V}$, $\tau_{\theta(x)}$ identifies $H(\overline{f}_i)$ with a vector field on $G_{\theta(x)}^*$. The latter vector field turns out to be the restriction of the following vector field on $G_{\theta(x)}$: the left-invariant vector field corresponding to $(d_{\theta(x)}f_i)^{\vee}\in\mathfrak{g}_{\theta(x)}$, where the notation $(d_{\theta(x)}f_i)^{\vee}$ is as defined in \ref{Subsection: The integrable system}. This discussion can be formulated as follows (cf. {\cite[Theorem 4.3]{KostantSolution}}).

\begin{theorem}\label{Theorem: Kostant Hamiltonian}
We have \begin{equation}\label{Equation: Relation between vector fields} d_{g}\tau_{\theta(x)}(d_eL_g((d_{\theta(x)}f_i)^{\vee}))=H(\overline{f}_i)_{\tau_{\theta(x)}(g)}\end{equation} for all $x\in\mathcal{V}$ and $g\in G_{\theta(x)}^*$.
\end{theorem}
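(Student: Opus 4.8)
The plan is to reduce \eqref{Equation: Relation between vector fields} to Kostant's explicit description of the Toda Hamiltonian vector fields \cite[Theorem 4.3]{KostantSolution}, the bridge being a direct computation of the differential of $\tau_{\theta(x)}$ at $g$. Fix $x\in\mathcal{V}$ and $g\in G_{\theta(x)}^*$, abbreviate $Y:=(d_{\theta(x)}f_i)^{\vee}\in\mathfrak{g}_{\theta(x)}$, and set $P:=\tau_{\theta(x)}(g)=\Adj_{\sigma_U(g)}(\theta(x))\in\overline{F}^{-1}(\overline{F}(x))$. Because $Y\in\mathfrak{g}_{\theta(x)}$, the curve $t\mapsto g\exp(tY)$ remains in $G_{\theta(x)}$, and since $G^*$ is open it lands in $G_{\theta(x)}^*$ for $\lvert t\rvert$ small; hence $\tau_{\theta(x)}(g\exp(tY))$ is defined near $t=0$, and differentiating the defining relation of $\tau_{\theta(x)}$ gives $d_g\tau_{\theta(x)}(d_eL_g(Y))=\tfrac{d}{dt}\big\vert_{t=0}\tau_{\theta(x)}(g\exp(tY))$. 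So it suffices to evaluate this derivative and match it with $H(\overline{f}_i)_P$.

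To evaluate it, write $g=\widetilde{w_0}u_-t_0u$ with $u:=\sigma_U(g)$, and for $t$ near $0$ use the (holomorphic, unique) $U_-TU$-factorization $u\exp(tY)=a_-(t)c(t)v(t)$, which exists because $u\exp(tY)$ is close to $u\in U\subseteq U_-TU$ and which satisfies $a_-(0)=c(0)=e$, $v(0)=u$. A short rearrangement, conjugating the $U_-$-part past $t_0$, shows that $g\exp(tY)=\widetilde{w_0}\big(u_-\,t_0a_-(t)t_0^{-1}\big)\big(t_0c(t)\big)v(t)$ is the $\widetilde{w_0}U_-TU$-factorization of $g\exp(tY)$, whence $\sigma_U(g\exp(tY))=v(t)$. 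Differentiating $u\exp(tY)=a_-(t)c(t)v(t)$ at $t=0$ via right-logarithmic derivatives $\gamma\mapsto\dot\gamma(0)\gamma(0)^{-1}$, and using $a_-(0)=c(0)=e$, yields $\Adj_u(Y)=\delta a_-+\delta c+\delta v$ with $\delta a_-\in\mathfrak{u}_-$, $\delta c\in\mathfrak{t}$, $\delta v:=\dot v(0)v(0)^{-1}\in\mathfrak{u}$; that is, $\delta v=\pi_{\mathfrak{u}}(\Adj_u(Y))$ where $\pi_{\mathfrak{u}}\colon\mathfrak{g}=\mathfrak{u}_-\oplus\mathfrak{t}\oplus\mathfrak{u}\to\mathfrak{u}$ is the projection. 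Since $f_i$ is $G$-invariant, $\Adj_u(Y)=\Adj_u\big((d_{\theta(x)}f_i)^{\vee}\big)=(d_{\Adj_u(\theta(x))}f_i)^{\vee}=(d_Pf_i)^{\vee}$, and therefore $d_g\tau_{\theta(x)}(d_eL_g(Y))=\tfrac{d}{dt}\big\vert_{t=0}\Adj_{v(t)}(\theta(x))=[\delta v,\Adj_u(\theta(x))]=\big[\pi_{\mathfrak{u}}\big((d_Pf_i)^{\vee}\big),P\big]$.

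It remains to recognize the last expression as $H(\overline{f}_i)_P$. By Theorem \ref{Theorem: Kostant fibre isomorphism}, $\tau_{\theta(x)}$ maps $G_{\theta(x)}^*$ onto the whole level set $\overline{F}^{-1}(\overline{F}(x))$, so $P$ ranges over that level set, and the required identity $H(\overline{f}_i)_P=\big[\pi_{\mathfrak{u}}\big((d_Pf_i)^{\vee}\big),P\big]$ is exactly (a restatement of) Kostant's formula for the Hamiltonian vector fields of the Kostant--Toda lattice \cite[Theorem 4.3]{KostantSolution}; note that $H(\overline{f}_i)_P$ is automatically of the form $[v,P]$ since it is tangent to the $G$-orbit through $P$. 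I expect the main obstacle to be twofold: first, the Bruhat/Gauss-decomposition bookkeeping — in particular the assertion that the $U_-TU$-factorization of $u\exp(tY)$ absorbs cleanly into the $\widetilde{w_0}U_-TU$-factorization of $g\exp(tY)$ after conjugation by $t_0$, and the holomorphy of all the curves involved near $t=0$; and second, aligning the sign and projection conventions of the computation above with the precise form in which \cite[Theorem 4.3]{KostantSolution} is phrased (using $(d_Pf_i)^{\vee}\in\mathfrak{g}_P$ to pass between $[\pi_{\mathfrak{u}}((d_Pf_i)^{\vee}),P]$ and expressions involving $\pi_{\mathfrak{b}}$ or $\pi_{\mathfrak{u}_-}$). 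As an alternative to the second paragraph, one may invoke Kostant's description of the Toda flows directly: it identifies the flow of $H(\overline{f}_i)$ on $\overline{F}^{-1}(\overline{F}(x))$ with $\tau_{\theta(x)}(g)\mapsto\tau_{\theta(x)}\big(g\exp(t\,(d_{\theta(x)}f_i)^{\vee})\big)$, and \eqref{Equation: Relation between vector fields} then follows by differentiating at $t=0$.
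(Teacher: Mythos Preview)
The paper gives no proof of this theorem at all: it is stated with the parenthetical ``cf.\ \cite[Theorem 4.3]{KostantSolution}'' and nothing more, the point being that \eqref{Equation: Relation between vector fields} is just the infinitesimal form of Kostant's identification of the Toda flow on $\overline{F}^{-1}(\overline{F}(x))$ with right translation by $\exp(t(d_{\theta(x)}f_i)^{\vee})$ on $G_{\theta(x)}^*$. Your ``alternative'' in the final sentence is therefore exactly the paper's approach.

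Your main argument goes further and actually unpacks what is implicit in that citation: you compute $d_g\tau_{\theta(x)}(d_eL_g(Y))$ directly via the $U_-TU$-factorization of $u\exp(tY)$, obtain $[\pi_{\mathfrak{u}}((d_Pf_i)^{\vee}),P]$, and then match this with Kostant's formula for $H(\overline{f}_i)_P$. The computation is correct --- the absorption of $a_-(t)$ and $c(t)$ into the $\widetilde{w_0}U_-TU$-decomposition works as you describe, the right-logarithmic derivative identity $\Adj_u(Y)=\delta a_-+\delta c+\delta v$ is valid because $a_-(0)=c(0)=e$, and the equivariance $\Adj_u((d_{\theta(x)}f_i)^{\vee})=(d_Pf_i)^{\vee}$ is the content of the paper's later Lemma \ref{Lemma: Differential identity}. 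The only genuine loose end is the one you flag yourself: confirming that $[\pi_{\mathfrak{u}}((d_Pf_i)^{\vee}),P]$ agrees in sign and projection with Kostant's stated formula for $H(\overline{f}_i)$. Since $(d_Pf_i)^{\vee}\in\mathfrak{g}_P$, one has $[\pi_{\mathfrak{u}}((d_Pf_i)^{\vee}),P]=-[\pi_{\mathfrak{b}_-}((d_Pf_i)^{\vee}),P]$, so the various projection conventions are interchangeable up to sign; you would still need to pin down the sign against Kostant's symplectic convention on $\mathcal{O}_{\text{KT}}$, but that is a matter of bookkeeping rather than a gap in the argument.
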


\subsection{A holomorphic map $\lambda:\mathcal{V}\rightarrow G^*$}\label{Subsection: A holomorphic map}
Let us consider the map \begin{equation}\label{Equation: Definition of lambda}\lambda:\mathcal{V}\rightarrow G^*,\quad x\mapsto\tau_{\theta(x)}^{-1}(x),\quad x\in\mathcal{V}.\end{equation} Theorem \ref{Theorem: Kostant fibre isomorphism} then implies that for all $x\in\mathcal{V}$, $\lambda(x)$ is the unique element of $G_{\theta(x)}^*$ such that \begin{equation}\label{Equation: Preceding}x=\Adj_{\sigma_U(\lambda(x))}(\theta(x)).\end{equation} The following two additional facts about $\lambda$ are needed in Section \ref{Section: The relationship}.

\begin{lemma}\label{Lemma: Property of lambda}
If $x\in\mathcal{V}$, then $\lambda(y)=\tau_{\theta(x)}^{-1}(y)$ for all $y\in \overline{F}^{-1}(\overline{F}(x))$.
\end{lemma}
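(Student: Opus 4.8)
The plan is to reduce everything to the observation that $\theta$ is constant along each level set of $\overline{F}$. First I would fix $x\in\mathcal{V}$ and an arbitrary $y\in\overline{F}^{-1}(\overline{F}(x))$, and check that $y$ itself lies in $\mathcal{V}$: indeed $\overline{F}(y)=\overline{F}(x)\in\mathcal{D}$, so $y\in\overline{F}^{-1}(\mathcal{D})=\mathcal{V}$ by \eqref{Equation: Definition of open Kostant-Toda}. In particular $\lambda(y)=\tau_{\theta(y)}^{-1}(y)$ is defined, and $y$ lies in the codomain $\overline{F}^{-1}(\overline{F}(x))=\overline{F}^{-1}(\overline{F}(y))$ of the isomorphism $\tau_{\theta(x)}$ from Theorem \ref{Theorem: Kostant fibre isomorphism}, so $\tau_{\theta(x)}^{-1}(y)$ also makes sense.

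The key step is to show $\theta(x)=\theta(y)$. Since $x,y\in\mathcal{O}_{\text{KT}}$ and $\overline{F}(x)=\overline{F}(y)$, Lemma \ref{Lemma: Precisely conjugate} gives that $x$ and $y$ are $G$-conjugate. By the defining property of $\theta$ in Lemma \ref{Lemma: Conjugate}, $\theta(x)$ is the unique element of $\xi+\mathcal{C}$ that is $G$-conjugate to $x$, while $\theta(y)$ is the unique element of $\xi+\mathcal{C}$ that is $G$-conjugate to $y$; since $x$ and $y$ lie in the same $G$-orbit, these two descriptions single out the same point, i.e. $\theta(x)=\theta(y)$.

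Finally, from $\theta(x)=\theta(y)$ together with $\overline{F}(x)=\overline{F}(y)$ one sees that $\tau_{\theta(x)}$ and $\tau_{\theta(y)}$ are literally the same isomorphism $G_{\theta(x)}^{*}\to\overline{F}^{-1}(\overline{F}(x))$ (see Theorem \ref{Theorem: Kostant fibre isomorphism} and \eqref{Equation: Definition of lambda}). Hence $\lambda(y)=\tau_{\theta(y)}^{-1}(y)=\tau_{\theta(x)}^{-1}(y)$, which is exactly the claim. There is no genuine obstacle here; the only point requiring care is to recognize that both membership in $\mathcal{V}$ and the value of $\theta$ depend only on the level set of $\overline{F}$, and to invoke Lemma \ref{Lemma: Precisely conjugate} in order to pass from ``$y$ in the same fibre as $x$'' to ``$y$ in the same $G$-orbit as $x$''.
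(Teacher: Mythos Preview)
Your argument is correct and follows essentially the same route as the paper: you use Lemma~\ref{Lemma: Precisely conjugate} to see that $x$ and $y$ are $G$-conjugate, then Lemma~\ref{Lemma: Conjugate} to conclude $\theta(x)=\theta(y)$, and finish from the definition of $\lambda$. Your additional verification that $y\in\mathcal{V}$ and that $\tau_{\theta(x)}^{-1}(y)$ is well-defined is a welcome bit of care that the paper leaves implicit.
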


\begin{proof}
Lemma \ref{Lemma: Precisely conjugate} implies that $x$ and $y$ are $G$-conjugate, which by Lemma \ref{Lemma: Conjugate} means that $\theta(x)=\theta(y)$. We therefore have $\lambda(y)=\tau_{\theta(y)}^{-1}(y)=\tau_{\theta(x)}^{-1}(y)$, where the first equality follows from the definition \eqref{Equation: Definition of lambda}.  
\end{proof}

\begin{lemma}
The map $\lambda:\mathcal{V}\rightarrow G^*$ is holomorphic.
\end{lemma}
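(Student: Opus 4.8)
The plan is to write $\lambda$ explicitly in terms of the holomorphic maps we have already constructed and then observe that holomorphy is inherited from these. First I would recall the characterization from \eqref{Equation: Preceding}: for $x\in\mathcal{V}$, the element $\lambda(x)\in G_{\theta(x)}^*$ is uniquely determined by $x=\Adj_{\sigma_U(\lambda(x))}(\theta(x))$. On the other hand, Lemma \ref{Lemma: Description of nu} tells us that $\nu(x)\in U$ is the unique element of $U$ with $\Adj_{\nu(x)}(\theta(x))=x$. Since $\sigma_U(\lambda(x))\in U$ also satisfies $\Adj_{\sigma_U(\lambda(x))}(\theta(x))=x$, the uniqueness statement gives
$$\sigma_U(\lambda(x))=\nu(x),\quad x\in\mathcal{V}.$$

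Next I would pin down $\lambda(x)$ completely from this identity. Writing $\lambda(x)=\widetilde{w_0}u_{-}tu$ with $u_{-}=\sigma_{U_{-}}(\lambda(x))$, $t=\sigma_T(\lambda(x))$, and $u=\sigma_U(\lambda(x))=\nu(x)$, I need to recover $u_{-}$ and $t$ as well. The condition $\lambda(x)\in G_{\theta(x)}$ means $\Adj_{\lambda(x)}(\theta(x))=\theta(x)$. Here I would invoke the part of Kostant's analysis (\cite[Theorem 2.6]{KostantSolution}, as packaged in Theorem \ref{Theorem: Kostant fibre isomorphism} and the surrounding discussion) that makes the remaining factors $\sigma_{U_{-}}(\lambda(x))$ and $\sigma_T(\lambda(x))$ explicit holomorphic (indeed algebraic) functions of $\theta(x)$ and $u=\nu(x)$; concretely, once the $U$-component of a point of $G_{\theta(x)}^*$ is fixed, the $U_-$- and $T$-components are uniquely and algebraically determined by the centralizer equation. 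Thus $\lambda$ is a composition of the holomorphic maps $x\mapsto\theta(x)$ (Lemma \ref{Lemma: Conjugate}), $x\mapsto\nu(x)$ (built from $\psi^{-1}$ and $\theta$ in \ref{Subsection: Some technical results}, hence holomorphic), and an algebraic map assembling these into an element of $G^*$, so $\lambda$ is holomorphic.

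Alternatively, and perhaps more cleanly, I would argue as follows: the map $\tau_{\theta(x)}\colon G_{\theta(x)}^*\to\overline{F}^{-1}(\overline{F}(x))$ of Theorem \ref{Theorem: Kostant fibre isomorphism} fits together, as $x$ varies, into a single holomorphic map whose fiberwise inverse is $\lambda$. Precisely, consider the holomorphic map $\Theta\colon\mathcal{V}\to(\xi+\mathcal{C})$, $x\mapsto\theta(x)$, and the ``universal'' version of $\tau$, namely the map $(g,z)\mapsto(\Adj_{\sigma_U(g)}(z),z)$ defined on the appropriate holomorphic family of groups $\{G_z^*\}_{z\in\xi+\mathcal{C}}$ over $\xi+\mathcal{C}$; this family is a holomorphic submanifold of $G\times(\xi+\mathcal{C})$ because $G_z^*$ is cut out of $G^*$ by the holomorphically-varying equation $\Adj_g(z)=z$, and Theorem \ref{Theorem: Kostant fibre isomorphism} says the universal $\tau$ is a biholomorphism onto its image with inverse given fiberwise by $y\mapsto(\lambda(y),\theta(y))$. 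Holomorphy of this inverse then yields holomorphy of $\lambda=\mathrm{pr}_{G^*}\circ(\text{universal }\tau)^{-1}$.

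The main obstacle is making the family $\{G_z^*\}_z$ and the family of isomorphisms $\tau_z$ genuinely holomorphic in $z$, rather than merely pointwise isomorphisms for each fixed $z$ — i.e. upgrading Theorem \ref{Theorem: Kostant fibre isomorphism} to a statement about the total space. I expect this to follow from the explicit, polynomial nature of Kostant's construction (the Gauss-type decomposition $G^*=\widetilde{w_0}U_-TU$ and the centralizer equations are all algebraic), so that one only needs to check constancy of fiber dimension and invoke the holomorphic inverse/implicit function theorem, but writing this carefully is the one place where some genuine verification is required; the reductions in the first two paragraphs, by contrast, are immediate once the uniqueness clauses of Lemmas \ref{Lemma: Description of nu} and Theorem \ref{Theorem: Kostant fibre isomorphism} are in hand.
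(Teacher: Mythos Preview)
Your first step---identifying $\sigma_U(\lambda(x))=\nu(x)$ via the uniqueness clause of Lemma \ref{Lemma: Description of nu}---is exactly how the paper begins. But from there the paper does \emph{not} appeal to a family version of Kostant's isomorphism or to an implicit-function argument; instead it computes the remaining factors $t=\sigma_T(\lambda(x))$ and $u_-=\sigma_{U_-}(\lambda(x))$ explicitly. Writing $x=\xi+x_0+\sum_{\alpha\in\Delta}x_\alpha e_\alpha$ and expanding the stabilizer equation $\Adj_{\widetilde{w_0}u_-t\nu(x)}(\theta(x))=\theta(x)$, the paper matches negative-simple-root components to find $\alpha(t)=x_\alpha^{-1}$ for all $\alpha\in\Delta$, giving $t$ as a visibly holomorphic function of $x$. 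It then checks that $\Adj_{\widetilde{w_0}t}(x)\in\mathcal{V}$ and applies $\nu$ a second time to obtain $u_-=\widetilde{w_0}^{-1}\nu(\Adj_{\widetilde{w_0}t}(x))^{-1}\widetilde{w_0}$.

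So the paper's route sidesteps precisely the obstacle you flag in your last paragraph: there is no need to upgrade Theorem \ref{Theorem: Kostant fibre isomorphism} to a statement about the total space, because one never has to invert anything in families---each Gauss factor is expressed by an explicit holomorphic formula in $x$. Your proposal is not wrong, but the gap you identify is real and you have not closed it; the paper's explicit computation is both shorter and self-contained.
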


\begin{proof}
Given $x\in\mathcal{V}$, we may write $\lambda(x)=\widetilde{w_0}u_{-}tu$ with $u_{-}\in U_{-}$, $t\in T$, and $u\in U$. Let us express $u_{-}$, $t$, and $u$ as functions of $x$. To this end, note that $u=\sigma_U(\lambda(x))$. Equation \eqref{Equation: Preceding} is then $x=\Adj_u(\theta(x))$, which by Lemma \ref{Lemma: Description of nu} implies that \begin{equation}\label{Equation: Expression for u}u=\nu(x).\end{equation}

To express $t$ in terms of $x$, we first recall that $\lambda(x)\in G_{\theta(x)}$. Hence
\begin{equation}\label{Equation: Stabilizer}\theta(x)=\Adj_{\lambda(x)}(\theta(x))=\Adj_{\widetilde{w_0}u_{-}tu}(\theta(x))=\Adj_{\widetilde{w_0}u_{-}t\nu(x)}(\theta(x))=\Adj_{\widetilde{w_0}u_{-}t}(x),\end{equation} where the last equality follows from Lemma \ref{Lemma: Description of nu}. Now apply \eqref{Equation: Kostant--Toda definition} and the fact that $\theta(x)\in\xi+\mathcal{C}$ to write $$x=\xi+x_0+\sum_{\alpha\in\Delta}x_{\alpha}e_{\alpha}\quad\text{ and }\quad\theta(x)=\xi+y$$ with $x_0\in\mathfrak{t}$, $y\in\mathcal{C}$, and $x_{\alpha}\in\mathbb{C}^{\times}$ for all $\alpha\in\Delta$. Observe that \eqref{Equation: Stabilizer} then becomes
\begin{align*}\xi+y & = \Adj_{\widetilde{w_0}u_{-}t}\bigg(\xi+x_0+\sum_{\alpha\in\Delta}x_{\alpha}e_{\alpha}\bigg)
\\ & = \Adj_{\widetilde{w_0}u_{-}}\bigg(\Adj_t(\xi)+x_0+\sum_{\alpha\in\Delta}(\alpha(t)x_{\alpha})e_{\alpha}\bigg)
\\ & = \Adj_{\widetilde{w_0}}\bigg(z + \sum_{\alpha\in\Delta}(\alpha(t)x_{\alpha})e_{\alpha}\bigg)
\\ & = \Adj_{\widetilde{w_0}}(z)+\sum_{\alpha\in\Delta}(\alpha(t)x_{\alpha})e_{w_0\alpha} 
\end{align*}
for some $z\in\mathfrak{b}_{-}$. Note that $y$ and $\Adj_{\widetilde{w_0}}(z)$ belong to $\mathfrak{b}$, while each of $\xi$ and $\sum_{\alpha\in\Delta}(\alpha(t)x_{\alpha})e_{w_0\alpha}$ is a sum of negative simple root vectors. It follows that $$\xi=\sum_{\alpha\in\Delta}(\alpha(t)x_{\alpha})e_{w_0\alpha},$$ or equivalently that $\alpha(t)=x_{\alpha}^{-1}$ for all $\alpha\in\Delta$. Expressing this in terms of the algebraic group isomorphism $$\gamma:T\rightarrow(\mathbb{C}^{\times})^{\Delta},\quad s\mapsto (\alpha(s))_{\alpha\in\Delta},\quad s\in T,$$
we have \begin{equation}\label{Equation: Expression for t}t=\gamma^{-1}((x_{\alpha}^{-1})_{\alpha\in\Delta}).\end{equation}

It remains to write $u_{-}$ in terms of $x$. Accordingly, it is not difficult to verify that
$$\Adj_{\widetilde{w_0}t}(x)\in\mathcal{O}_{\text{KT}}.$$ We also have
$$F(\Adj_{\widetilde{w_0}t}(x))=F(x)\in\mathcal{D},$$ implying that $\Adj_{\widetilde{w_0}t}(x)\in\mathcal{V}$. An application of Lemma \ref{Lemma: Conjugate} then reveals that $\theta(\Adj_{\widetilde{w_0}t}(x))=\theta(x)$. In addition, $\widetilde{w_0}u_{-}\widetilde{w_0}^{-1}\in U$ and \eqref{Equation: Stabilizer} implies that
$$\Adj_{\widetilde{w_0}u_{-}\widetilde{w_0}^{-1}}\bigg(\Adj_{\widetilde{w_0}t}(x)\bigg)=\Adj_{\widetilde{w_0}u_{-}t}(x)=\theta(x)=\theta(\Adj_{\widetilde{w_0}t}(x))$$ It now follows from Lemma \ref{Lemma: Description of nu} that 
$$\widetilde{w_0}u_{-}\widetilde{w_0}^{-1}=\nu(\Adj_{\widetilde{w_0}t}(x))^{-1},$$ or equivalently
\begin{equation}\label{Equation: Expression for u_-}u_{-}=\widetilde{w_0}^{-1}\nu(\Adj_{\widetilde{w_0}t}(x))^{-1}\widetilde{w_0}.\end{equation}

Our lemma follows from \eqref{Equation: Expression for u}, \eqref{Equation: Expression for t}, and \eqref{Equation: Expression for u_-}, along with the fact that $\nu$ and $\gamma$ are holomorphic.
\end{proof}

\section{The relationship between $\mathcal{O}_{\text{KT}}$ and $\mathcal{Z}_{\mathfrak{g}}$}\label{Section: The relationship}
We now relate the two integrable systems $\overline{F}:\mathcal{O}_{\text{KT}}\rightarrow\mathbb{C}^r$ and $\widetilde{F}:\mathcal{Z}_{\mathfrak{g}}\rightarrow\mathbb{C}^r$. Section \ref{Subsection: A holomorphic map 2} provides the main ingredients, namely holomorphic maps $\beta:\mathcal{V}\rightarrow\mathcal{S}$ (see \eqref{Equation: Definition of beta}) and $\delta:\mathcal{V}\rightarrow U$ (see \eqref{Equation: Definition of delta}) with certain desirable properties  (Lemmas \ref{Lemma: Uniquely conjugate}, \ref{Lemma: Uniqueness of delta}, and \ref{Lemma: Open stabilizer}). This leads to Section \ref{Subsection: Proof of Theorem 1}, which proves Theorem \ref{Theorem: Main theorem} via three propositions and two lemmas.

\subsection{Some preliminaries}\label{Subsection: A holomorphic map 2}
Recall the isomorphism \eqref{Equation: Kostant variety isomorphism} and let $\pi_U:U\times\mathcal{S}\rightarrow U$ and $\pi_{\mathcal{S}}:U\times\mathcal{S}\rightarrow\mathcal{S}$ denote the canonical projection maps. The holomorphic map 
\begin{equation}\label{Equation: Definition of beta}\beta:\mathcal{V}\rightarrow\mathcal{S},\quad x\mapsto \pi_{\mathcal{S}}(\psi^{-1}(x)),\quad x\in\mathcal{V}\end{equation}
then has the following important property.
\begin{lemma}\label{Lemma: Uniquely conjugate}
If $x\in\mathcal{V}$, then $\beta(x)$ is the unique point in $\mathcal{S}$ with the property of being $G$-conjugate to $x$.
\end{lemma}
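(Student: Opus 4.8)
The plan is to unwind the definition of $\beta$ through Kostant's isomorphism $\psi$ and then invoke the uniqueness part of Kostant's slice theorem. Concretely, fix $x \in \mathcal{V}$. Since $\mathcal{V} \subseteq \mathcal{O}_{\text{KT}} \subseteq \xi + \mathfrak{b}$, we may apply $\psi^{-1}$ to $x$; write $\psi^{-1}(x) = (u, \beta(x))$ with $u := \pi_U(\psi^{-1}(x)) \in U$ and $\beta(x) \in \mathcal{S}$ by the definition \eqref{Equation: Definition of beta}. The defining formula of $\psi$ in \eqref{Equation: Kostant variety isomorphism} then gives $x = \Adj_u(\beta(x))$, so $x$ is $G$-conjugate to $\beta(x)$. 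This establishes existence of a point in $\mathcal{S}$ that is $G$-conjugate to $x$.

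For uniqueness, I would recall from the discussion preceding Proposition \ref{Proposition: Symplectic subvariety} (citing \cite[Theorem 8]{KostantLie}) that every regular element of $\mathfrak{g}$ is $G$-conjugate to a \emph{unique} element of the Slodowy slice $\mathcal{S}$. Since $\mathcal{V} \subseteq \mathcal{O}_{\text{KT}} \subseteq \mathfrak{g}_{\text{reg}}$ — this inclusion was already observed in the proof of Lemma \ref{Lemma: Precisely conjugate} — the element $x$ is regular, so there is exactly one element of $\mathcal{S}$ in its $G$-orbit. Combined with the previous paragraph, that element must be $\beta(x)$, which is precisely the claim.

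There is no real obstacle here: the lemma is essentially a restatement of the defining property of $\psi$ together with Kostant's uniqueness theorem. The only point requiring a moment's care is confirming that the hypotheses of the uniqueness statement apply, i.e. that $x \in \mathfrak{g}_{\text{reg}}$, which follows from $\mathcal{O}_{\text{KT}} \subseteq \xi + \mathfrak{b} \subseteq \mathfrak{g}_{\text{reg}}$ via \cite[Lemma 10]{KostantLie}. One could write the whole argument in three or four sentences.

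\begin{proof}
Fix $x \in \mathcal{V}$ and write $\psi^{-1}(x) = (u, \beta(x))$ with $u := \pi_U(\psi^{-1}(x)) \in U$, so that $\beta(x) = \pi_{\mathcal{S}}(\psi^{-1}(x)) \in \mathcal{S}$ by \eqref{Equation: Definition of beta}. The formula \eqref{Equation: Kostant variety isomorphism} defining $\psi$ gives $x = \psi(u, \beta(x)) = \Adj_u(\beta(x))$, so $\beta(x)$ is $G$-conjugate to $x$. This proves that $\mathcal{S}$ contains a point $G$-conjugate to $x$; it remains to prove that such a point is unique. Recall from the proof of Lemma \ref{Lemma: Precisely conjugate} that $\mathcal{O}_{\text{KT}} \subseteq \xi + \mathfrak{b} \subseteq \mathfrak{g}_{\text{reg}}$ (see \cite[Lemma 10]{KostantLie}), so $x \in \mathfrak{g}_{\text{reg}}$. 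By the discussion of $\mathcal{S}$ preceding Proposition \ref{Proposition: Symplectic subvariety} (see \cite[Theorem 8]{KostantLie}), a regular element of $\mathfrak{g}$ is $G$-conjugate to exactly one element of $\mathcal{S}$. Hence $\beta(x)$ is the unique point of $\mathcal{S}$ that is $G$-conjugate to $x$.
\end{proof}
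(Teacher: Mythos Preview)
Your proof is correct and follows essentially the same route as the paper's: write $\psi^{-1}(x)=(u,\beta(x))$ to see that $x=\Adj_u(\beta(x))$, then invoke Kostant's slice theorem for uniqueness. The paper phrases uniqueness as ``no two distinct elements of $\mathcal{S}$ are $G$-conjugate'' rather than going through regularity of $x$, but this is the same underlying fact.
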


\begin{proof}
We have $\psi^{-1}(x)=(u,\beta(x))$ for some $u\in U$, i.e. $\Adj_u(\beta(x))=x$. In particular, $x$ and $\beta(x)$ are $G$-conjugate. Uniqueness follows from the fact that no two distinct elements of $\mathcal{S}$ are $G$-conjugate. 
\end{proof}

Now define another holomorphic map as
\begin{equation}\label{Equation: Definition of delta}\delta:\mathcal{V}\rightarrow U,\quad x\mapsto\bigg(\pi_U(\psi^{-1}(\theta(x)))\bigg)^{-1},\quad x\in\mathcal{V}.
\end{equation}

\begin{lemma}\label{Lemma: Uniqueness of delta}
If $x\in\mathcal{V}$, then $\delta(x)$ is the unique element of $U$ satisfying $\beta(x)=\Adj_{\delta(x)}(\theta(x))$.
\end{lemma}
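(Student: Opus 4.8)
The plan is to recall the definitions of $\beta$, $\theta$, and $\delta$ and show the required conjugacy identity holds, after which uniqueness follows from standard facts about the Kostant section. First I would write $\psi^{-1}(x) = (u_1, \beta(x))$ and $\psi^{-1}(\theta(x)) = (u_2, \beta'(x))$ for appropriate $u_1, u_2 \in U$ and $\beta'(x) \in \mathcal{S}$, so that by the defining formula \eqref{Equation: Kostant variety isomorphism} we have $x = \Adj_{u_1}(\beta(x))$ and $\theta(x) = \Adj_{u_2}(\beta'(x))$. By Lemma \ref{Lemma: Conjugate}, $x$ and $\theta(x)$ are $G$-conjugate, hence $\beta(x)$ and $\beta'(x)$ are both elements of $\mathcal{S}$ that are $G$-conjugate to $x$; since no two distinct elements of $\mathcal{S}$ are $G$-conjugate (this is the uniqueness statement in Kostant's theorem, cited just before Proposition \ref{Proposition: Symplectic subvariety}), we conclude $\beta'(x) = \beta(x)$.

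Next I would combine these: from $\theta(x) = \Adj_{u_2}(\beta(x))$ we get $\beta(x) = \Adj_{u_2^{-1}}(\theta(x))$, and since $\delta(x) = u_2^{-1}$ by the definition \eqref{Equation: Definition of delta}, this is exactly $\beta(x) = \Adj_{\delta(x)}(\theta(x))$, establishing existence.

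For uniqueness, suppose $u \in U$ satisfies $\beta(x) = \Adj_u(\theta(x))$. Applying $\Adj_{u^{-1}}$ gives $\theta(x) = \Adj_{u^{-1}}(\beta(x))$, so $(u^{-1}, \beta(x)) \in U \times \mathcal{S}$ maps to $\theta(x)$ under $\psi$. But $\psi$ is an isomorphism of varieties (in particular injective), so $(u^{-1}, \beta(x)) = \psi^{-1}(\theta(x)) = (\delta(x)^{-1}, \beta(x))$, forcing $u = \delta(x)$.

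The main (and only genuine) obstacle is making sure the cross-identification $\beta'(x) = \beta(x)$ is correctly justified — it rests on pairing Kostant's uniqueness-in-the-slice statement with Lemma \ref{Lemma: Conjugate}'s assertion that $x$ and $\theta(x)$ are $G$-conjugate. Everything else is a formal unwinding of the definitions and the injectivity of $\psi$.
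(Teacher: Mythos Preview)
Your proof is correct and follows essentially the same approach as the paper's: both arguments identify the $\mathcal{S}$-component of $\psi^{-1}(\theta(x))$ with $\beta(x)$ via the uniqueness of $G$-conjugates in the slice (the paper packages this as Lemma~\ref{Lemma: Uniquely conjugate}, which you reprove inline), then read off existence from the definition of $\delta$ and deduce uniqueness from the injectivity of $\psi$.
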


\begin{proof}
Lemmas \ref{Lemma: Conjugate} and \ref{Lemma: Uniquely conjugate} imply that $\beta(x)$ is the unique element of $\mathcal{S}$ with the property of being $G$-conjugate to $\theta(x)$. It follows that $\psi^{-1}(\theta(x))=(u,\beta(x))$ for some $u\in U$, or equivalently $\beta(x)=\Adj_{u^{-1}}(\theta(x))$. At the same time, \eqref{Equation: Definition of delta} forces $u^{-1}=\delta(x)$ to hold. We conclude that $\beta(x)=\Adj_{\delta(x)}(\theta(x))$.

To establish uniqueness, suppose that $u\in U$ satisfies $\beta(x)=\Adj_{u}(\theta(x))$. We then have $$\Adj_{u^{-1}}(\beta(x))=\theta(x)=\Adj_{\delta(x)^{-1}}(\beta(x)),$$ so that $\psi(u^{-1},\beta(x))=\psi(\delta(x)^{-1},\beta(x))$. Since $\psi$ is an isomorphism, we must have $u=\delta(x)$. This completes the proof.  
\end{proof}

One immediate consequence of Lemma \ref{Lemma: Uniqueness of delta} is the identity \begin{equation}\label{Equation: Identity} G_{\beta(x)}=\delta(x)G_{\theta(x)}\delta(x)^{-1},\quad x\in\mathcal{V}.\end{equation} The following variant of \eqref{Equation: Identity} is needed in the next section. 

\begin{lemma}\label{Lemma: Open stabilizer}
If $x\in\mathcal{V}$, then $G_{\beta(x)}^*=\delta(x)G_{\theta(x)}^*\delta(x)^{-1}$.
\end{lemma}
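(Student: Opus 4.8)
The plan is to derive the statement from the conjugation identity \eqref{Equation: Identity} together with the observation that conjugation by $\delta(x)$ preserves the open set $G^*$ used to define $G_x^*$. First I would recall from \eqref{Equation: Identity} that $G_{\beta(x)} = \delta(x) G_{\theta(x)} \delta(x)^{-1}$, so that intersecting both sides with $G^*$ gives
$$G_{\beta(x)}^* = G_{\beta(x)} \cap G^* = \bigl(\delta(x) G_{\theta(x)} \delta(x)^{-1}\bigr) \cap G^*.$$
Hence it suffices to prove that $\bigl(\delta(x) G_{\theta(x)} \delta(x)^{-1}\bigr) \cap G^* = \delta(x)\bigl(G_{\theta(x)} \cap G^*\bigr)\delta(x)^{-1}$, and for this it is enough to show that conjugation by $\delta(x) \in U$ maps $G^*$ bijectively onto itself, i.e. $\delta(x) G^* \delta(x)^{-1} = G^*$.

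To establish $\delta(x) G^* \delta(x)^{-1} = G^*$, I would use the definition $G^* = \widetilde{w_0} U_{-} T U$ from \eqref{Equation: Translate}. Writing $u := \delta(x) \in U$, a general element of $G^*$ has the form $\widetilde{w_0} u_{-} t u'$ with $u_{-} \in U_{-}$, $t \in T$, $u' \in U$, and conjugating gives $u \widetilde{w_0} u_{-} t u' u^{-1}$. The key point is that $u \widetilde{w_0} = \widetilde{w_0} (\widetilde{w_0}^{-1} u \widetilde{w_0})$ and that $\widetilde{w_0}^{-1} u \widetilde{w_0} \in \Adj_{\widetilde{w_0}^{-1}}(U) = U_{-}$, since $w_0$ sends positive roots to negative roots; write $u_{-}' := \widetilde{w_0}^{-1} u \widetilde{w_0} \in U_{-}$. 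Thus the conjugate becomes $\widetilde{w_0} u_{-}' u_{-} t u' u^{-1}$. Now $u_{-}' u_{-} \in U_{-}$, and $t u' u^{-1} = (t u' t^{-1})(t u^{-1} t^{-1}) \, t$, where $t u' t^{-1}, t u^{-1} t^{-1} \in U$ (as $T$ normalizes $U$), so $t u' u^{-1} \in U T = T U$; combining, the conjugate lies in $\widetilde{w_0} U_{-} T U = G^*$. This shows $u G^* u^{-1} \subseteq G^*$, and applying the same argument to $u^{-1}$ (also an element of $U$) gives the reverse inclusion, hence $\delta(x) G^* \delta(x)^{-1} = G^*$.

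Combining the two paragraphs: from $\delta(x) G^* \delta(x)^{-1} = G^*$ we get, for any subgroup $H \leq G$, the identity $\delta(x)(H \cap G^*)\delta(x)^{-1} = (\delta(x) H \delta(x)^{-1}) \cap G^*$; applying this with $H = G_{\theta(x)}$ and using \eqref{Equation: Identity} yields
$$\delta(x) G_{\theta(x)}^* \delta(x)^{-1} = \bigl(\delta(x) G_{\theta(x)} \delta(x)^{-1}\bigr) \cap G^* = G_{\beta(x)} \cap G^* = G_{\beta(x)}^*,$$
which is the claim. I expect the main obstacle to be the bookkeeping in the second paragraph — specifically, verifying cleanly that $\widetilde{w_0}^{-1} U \widetilde{w_0} = U_{-}$ (which requires the chosen lift $\widetilde{w_0}$ to genuinely represent $w_0$ and $w_0$ to invert the positive system) and that the resulting product $\widetilde{w_0} u_{-}' u_{-} t u' u^{-1}$ regroups into $\widetilde{w_0} U_{-} T U$ using that $T$ normalizes both $U$ and $U_{-}$; none of these is deep, but the factorization of $G^*$ must be handled carefully to avoid an error in the order of factors.
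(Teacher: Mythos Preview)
Your proposal is correct and takes essentially the same approach as the paper: both arguments hinge on the observation that $\widetilde{w_0}^{-1}U\widetilde{w_0}=U_{-}$, which lets one rewrite $\delta(x)\,\widetilde{w_0}u_{-}tu'\,\delta(x)^{-1}$ as $\widetilde{w_0}(\text{element of }U_{-})\,t\,(\text{element of }U)\in G^*$. The only cosmetic difference is that you phrase this as ``conjugation by any $u\in U$ preserves $G^*$'' before intersecting with $G_{\theta(x)}$, whereas the paper works directly with $g\in G_{\theta(x)}^*$ and appeals to \eqref{Equation: Identity} afterward; your rearrangement of $t u' u^{-1}$ is unnecessarily elaborate (it is already visibly in $TU$), but this does no harm.
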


\begin{proof}
We first establish the inclusion $\delta(x)G_{\theta(x)}^*\delta(x)^{-1}\subseteq G_{\beta(x)}^*$. Suppose that $g\in G_{\theta(x)}^*$ and write $g=\widetilde{w_0}u_{-}tu$ for some $u_{-}\in U_{-}$, $t\in T$, and $u\in U$. We have
$$\delta(x)g\delta(x)^{-1} = \delta(x)\widetilde{w_0}u_{-}tu\delta(x)^{-1}=\widetilde{w_0}(\widetilde{w_0}^{-1}\delta(x)\widetilde{w_0}u_{-})t(u\delta(x)^{-1}),$$ while we observe that $$\widetilde{w_0}^{-1}\delta(x)\widetilde{w_0}u_{-}\in (\widetilde{w_0}^{-1}U\widetilde{w_0})u_{-}=U_{-}\quad\text{and}\quad u\delta(x)^{-1}\in U.$$ It follows that $\delta(x)g\delta(x)^{-1}\in G^*$. At the same time, \eqref{Equation: Identity} implies that $\delta(x)g\delta(x)^{-1}\in G_{\beta(x)}$. We conclude that $\delta(x)g\delta(x)^{-1}\in G_{\beta(x)}^*$, which establishes the inclusion $\delta(x)G_{\theta(x)}^*\delta(x)^{-1}\subseteq G_{\beta(x)}^*$. An analogous argument gives the opposite inclusion.   
\end{proof}

\subsection{Proof of Theorem \ref{Theorem: Main theorem}}\label{Subsection: Proof of Theorem 1}
We are now equipped to prove Theorem \ref{Theorem: Main theorem}, the main result of this paper. Note that Proposition \ref{Proposition: Dense} shows $\mathcal{V}$ to be dense in $\mathcal{O}_{\text{KT}}$, and that Theorem \ref{Theorem: Main theorem}(i) follows immediately from \eqref{Equation: Definition of open Kostant-Toda}. To verify the rest of Theorem \ref{Theorem: Main theorem}, recall the holomorphic maps $\lambda:\mathcal{V}\rightarrow G^*$, $\beta:\mathcal{V}\rightarrow\mathcal{S}$, and $\delta:\mathcal{V}\rightarrow U$ and their properties. We have $\lambda(x)\in G_{\theta(x)}^*\subseteq G_{\theta(x)}$ for all $x\in\mathcal{V}$, which by \eqref{Equation: Identity} implies that $\delta(x)\lambda(x)\delta(x)^{-1}\in G_{\beta(x)}$. It follows that $(\delta(x)\lambda(x)\delta(x)^{-1},\beta(x))\in\mathcal{Z}_{\mathfrak{g}}$, giving rise to the holomorphic map
\begin{equation}\label{Equation: Definition of biholomorphism}\varphi:\mathcal{V}\rightarrow\mathcal{Z}_{\mathfrak{g}},\quad x\mapsto (\delta(x)\lambda(x)\delta(x)^{-1},\beta(x)),\quad x\in\mathcal{V}.\end{equation} This map is compatible with the integrable systems $\overline{F}=(\overline{f}_1,\ldots,\overline{f}_r):\mathcal{O}_{\text{KT}}\rightarrow\mathbb{C}^r$ and $\widetilde{F}=(\widetilde{f}_1,\ldots,\widetilde{f}_r):\mathcal{Z}_{\mathfrak{g}}\rightarrow\mathbb{C}^r$ in the following sense.

\begin{proposition}
We have a commutative diagram
\begin{align*}
\xymatrixrowsep{4pc}\xymatrixcolsep{4pc}\xymatrix{
	\mathcal{V} \ar[rd]_{\overline{F}\big\vert_{\mathcal{V}}} \ar[rr]^{\varphi} & & \mathcal{Z}_{\mathfrak{g}} \ar[ld]^{\widetilde{F}} \\
	& \mathbb{C}^r & 
}.
\end{align*} 
\end{proposition}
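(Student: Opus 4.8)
The plan is to unwind the definitions of both integrable systems and reduce the claimed commutativity to a single identity about the map $\beta$. Chasing the diagram, for $x\in\mathcal{V}$ we need to verify that $\widetilde{F}(\varphi(x))=\overline{F}(x)$. By the definition \eqref{Equation: Definition of biholomorphism} of $\varphi$, we have $\varphi(x)=(\delta(x)\lambda(x)\delta(x)^{-1},\beta(x))\in\mathcal{Z}_{\mathfrak{g}}$, and by the definition \eqref{Equation: Definition of new f_i} of the functions $\widetilde{f}_i$ this gives $\widetilde{f}_i(\varphi(x))=f_i(\beta(x))$ for each $i\in\{1,\ldots,r\}$. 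On the other side, $\overline{f}_i(x)=f_i(x)$ by definition. So the diagram commutes precisely when $f_i(\beta(x))=f_i(x)$ for all $i$ and all $x\in\mathcal{V}$.

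The key step is then to invoke Lemma \ref{Lemma: Uniquely conjugate}, which tells us that $\beta(x)$ is $G$-conjugate to $x$. Since each $f_i$ lies in $\mathbb{C}[\mathfrak{g}]^G$ and is therefore constant on $G$-orbits, we immediately conclude $f_i(\beta(x))=f_i(x)$ for all $i$. Combining this with the two computations above yields $\widetilde{F}(\varphi(x))=(f_1(\beta(x)),\ldots,f_r(\beta(x)))=(f_1(x),\ldots,f_r(x))=\overline{F}(x)$, which is exactly the asserted commutativity.

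There is no real obstacle here: the argument is essentially a definition chase together with the $G$-invariance of the $f_i$ and the already-established conjugacy property of $\beta$. The only thing to be careful about is making sure that each piece of notation is matched correctly to the relevant earlier definition (that $\widetilde{f}_i$ only depends on the $\mathcal{S}$-component, that $\overline{f}_i$ is the restriction of $f_i$, and that Lemma \ref{Lemma: Uniquely conjugate} applies to every point of $\mathcal{V}$), so that the single equation $f_i(\beta(x))=f_i(x)$ genuinely closes the diagram.
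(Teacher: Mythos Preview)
Your proof is correct and follows essentially the same approach as the paper: both reduce the commutativity to the identity $f_i(\beta(x))=f_i(x)$ via the definition of $\varphi$ and $\widetilde{f}_i$, and then invoke Lemma \ref{Lemma: Uniquely conjugate} together with the $G$-invariance of the $f_i$.
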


\begin{proof}
Our task is to prove that $\widetilde{f}_i\circ\varphi=\overline{f}_i\big\vert_{\mathcal{V}}$ for all $i\in\{1,\ldots,r\}$. Given $x\in\mathcal{V}$ and $i\in\{1,\ldots,r\}$, note that \eqref{Equation: Definition of new f_i} gives
\begin{equation}\label{Equation: Simple}\widetilde{f}_i(\varphi(x))=\widetilde{f}_i(\delta(x)\lambda(x)\delta(x)^{-1},\beta(x))=f_i(\beta(x)).\end{equation}
Now recall that $x$ and $\beta(x)$ are $G$-conjugate (by Lemma \ref{Lemma: Uniquely conjugate}). Since $f_i$ is $G$-invariant, it follows that $f_i(\beta(x))=f_i(x)$. This combines with the calculation \eqref{Equation: Simple} to show that $\widetilde{f}_i\circ\varphi=\overline{f}_i\big\vert_{\mathcal{V}}$, completing the proof.    
\end{proof}

It remains to prove Theorem \ref{Theorem: Main theorem}(iii) and show that $\varphi$ is an open embedding of complex manifolds. To address the latter issue, recall the open dense subset $\mathcal{D}\subseteq\mathbb{C}^r$ from \ref{Subsection: Some technical results}. Let $\mathcal{D}_{\mathcal{S}}\subseteq\mathcal{S}$ denote the preimage of $\mathcal{D}$ under the isomorphism \eqref{Equation: Variety isomorphism}, i.e. $\mathcal{D}_{\mathcal{S}}:=(F\big\vert_{\mathcal{S}})^{-1}(\mathcal{D})$. It follows that $G^*\times\mathcal{D}_{\mathcal{S}}$ is an open subset of $G\times\mathcal{S}$, so that $$\mathcal{W}:=(G^*\times\mathcal{D}_{\mathcal{S}})\cap\mathcal{Z}_{\mathfrak{g}}$$ defines an open subset of $\mathcal{Z}_{\mathfrak{g}}$.

\begin{lemma}\label{Lemma: Image}
The image of $\varphi$ is $\mathcal{W}$.
\end{lemma}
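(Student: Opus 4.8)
The goal is to show $\varphi(\mathcal{V}) = \mathcal{W}$, where $\mathcal{W} = (G^* \times \mathcal{D}_{\mathcal{S}}) \cap \mathcal{Z}_{\mathfrak{g}}$. I would prove the two inclusions separately, the forward one being essentially a bookkeeping exercise and the reverse one requiring a genuine construction of a preimage.

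For the inclusion $\varphi(\mathcal{V}) \subseteq \mathcal{W}$, fix $x \in \mathcal{V}$ and write $\varphi(x) = (\delta(x)\lambda(x)\delta(x)^{-1}, \beta(x))$. The second coordinate $\beta(x)$ lies in $\mathcal{S}$, and since $\beta(x)$ is $G$-conjugate to $x$ (Lemma \ref{Lemma: Uniquely conjugate}) with $x \in \mathcal{V} = \overline{F}^{-1}(\mathcal{D})$, $G$-invariance of $F$ gives $F(\beta(x)) = F(x) \in \mathcal{D}$, so $\beta(x) \in \mathcal{D}_{\mathcal{S}}$. For the first coordinate, $\lambda(x) \in G^*_{\theta(x)} \subseteq G^*$, and Lemma \ref{Lemma: Open stabilizer} tells us $\delta(x)G^*_{\theta(x)}\delta(x)^{-1} = G^*_{\beta(x)} \subseteq G^*$, hence $\delta(x)\lambda(x)\delta(x)^{-1} \in G^*$. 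Thus $\varphi(x) \in (G^* \times \mathcal{D}_{\mathcal{S}}) \cap \mathcal{Z}_{\mathfrak{g}} = \mathcal{W}$.

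For the reverse inclusion $\mathcal{W} \subseteq \varphi(\mathcal{V})$, take $(g, s) \in \mathcal{W}$, so $s \in \mathcal{D}_{\mathcal{S}}$, $g \in G^*_s$, and we must exhibit $x \in \mathcal{V}$ with $\varphi(x) = (g,s)$. The natural candidate is obtained by reversing the maps: since $F(s) \in \mathcal{D}$, there is a unique $\theta_0 \in \xi + \mathcal{C}$ with $F(\theta_0) = F(s)$ (via the biholomorphism $F_{\mathcal{C}}$ from Lemma \ref{Lemma: Conjugate}), and a unique $\delta_0 \in U$ with $s = \Adj_{\delta_0}(\theta_0)$ (Lemma \ref{Lemma: Uniqueness of delta}, or directly via $\psi$); then set $g_0 := \delta_0^{-1} g \delta_0 \in G^*_{\theta_0}$ using Lemma \ref{Lemma: Open stabilizer} in reverse, and define $x := \tau_{\theta_0}(g_0) = \Adj_{\sigma_U(g_0)}(\theta_0) \in \overline{F}^{-1}(\overline{F}(\theta_0))$ via Theorem \ref{Theorem: Kostant fibre isomorphism}. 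One then checks $x \in \mathcal{V}$: indeed $x$ is $G$-conjugate to $\theta_0$, so $F(x) = F(\theta_0) = F(s) \in \mathcal{D}$, and $x \in \overline{F}^{-1}(\overline{F}(\theta_0)) \subseteq \mathcal{O}_{\text{KT}}$, hence $x \in \mathcal{V}$. Finally one verifies $\varphi(x) = (g,s)$ by identifying $\theta(x) = \theta_0$ (both lie in $\xi + \mathcal{C}$ and are $G$-conjugate to $x$, so uniqueness applies), then $\beta(x) = s$ (both lie in $\mathcal{S}$ and are $G$-conjugate to $x$), then $\delta(x) = \delta_0$ (uniqueness in Lemma \ref{Lemma: Uniqueness of delta}), and $\lambda(x) = g_0$ (uniqueness in Theorem \ref{Theorem: Kostant fibre isomorphism}, since $\tau_{\theta(x)}(g_0) = x$); assembling these gives $\varphi(x) = (\delta_0 g_0 \delta_0^{-1}, s) = (g, s)$.

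The main obstacle is organizing the chain of uniqueness statements in the reverse inclusion so that each intermediate object built from $(g,s)$ is correctly matched with the corresponding object $\theta(x), \beta(x), \delta(x), \lambda(x)$ attached to the constructed point $x$; each individual matching is a one-line uniqueness argument, but one must be careful that the defining property invoked for, say, $\delta_0$ is exactly the property characterizing $\delta(x)$ in Lemma \ref{Lemma: Uniqueness of delta}, and similarly that $g_0 \in G^*_{\theta_0}$ is legitimate, which is where Lemma \ref{Lemma: Open stabilizer} is essential. No hard analysis is involved — the content is entirely in the interlocking uniqueness properties established in Section \ref{Subsection: A holomorphic map 2} and the Kostant isomorphism of Theorem \ref{Theorem: Kostant fibre isomorphism}.
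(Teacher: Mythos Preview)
Your proof is correct and follows essentially the same strategy as the paper's. The only difference is organizational: for the reverse inclusion the paper first produces an auxiliary point $z\in\mathcal{V}$ with $\beta(z)=s$ so that Lemmas~\ref{Lemma: Uniqueness of delta} and~\ref{Lemma: Open stabilizer} (which are stated only for points of $\mathcal{V}$) apply verbatim, and then constructs the preimage $v$ from $z$, whereas you build $\theta_0$ and $\delta_0$ directly and invoke those lemmas in the evident generality their proofs support; the resulting preimage and the chain of uniqueness identifications are identical in both arguments.
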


\begin{proof}
Suppose that $x\in\mathcal{V}$, recalling that $x$ and $\beta(x)$ are $G$-conjugate (see Lemma \ref{Lemma: Uniquely conjugate}). This implies that $F(\beta(x))=F(x)$, while we note that \eqref{Equation: Definition of open Kostant-Toda} gives $F(x)\in\mathcal{D}$. It follows that $F(\beta(x))\in\mathcal{D}$, or equivalently $\beta(x)\in (F\big\vert_{\mathcal{S}})^{-1}(\mathcal{D})=\mathcal{D}_{\mathcal{S}}$. At the same time, Lemma \ref{Lemma: Open stabilizer} tells us that $\delta(x)\lambda(x)\delta(x)^{-1}\in G_{\beta(x)}^*$. We conclude that $\varphi(x)\in G^*\times\mathcal{D}_{\mathcal{S}}$, i.e. $\varphi(x)\in\mathcal{W}$, implying that $\varphi(\mathcal{V})\subseteq\mathcal{W}$. 

To prove that $\mathcal{W}\subseteq \varphi(\mathcal{V})$, let $(g,x)\in\mathcal{W}$ be given. The restriction of $F$ to $\mathfrak{t}$ is surjective (see the proof of \cite[Proposition 10]{KostantLie}), allowing us to find $y\in\mathfrak{t}$ with $F(y)=F(x)$. Set $z:=\xi+y\in\mathcal{O}_{\text{KT}}$ and note that $F(z)=F(x)$ (see \cite[Corollary 3.1.43]{Chriss}). Note also that $x\in\mathcal{S}\subseteq\mathfrak{g}_{\text{reg}}$, while \cite[Lemma 10]{KostantLie} implies that $z\in\mathfrak{g}_{\text{reg}}$. An application of \cite[Theorem 2]{KostantLie} then shows $z$ and $x$ to be $G$-conjugate. At the same time, the definition of $\mathcal{W}$ implies that $F(z)=F(x)\in\mathcal{D}$, so that $z\in\overline{F}^{-1}(\mathcal{D})=\mathcal{V}$. These last sentences combine with Lemma \ref{Lemma: Uniquely conjugate} to imply that $\beta(z)=x$. Lemma \ref{Lemma: Open stabilizer} then tells us that $G_x^*=\delta(z)G_{\theta(z)}^*\delta(z)^{-1}$. Now observe that $g\in G_x^*$, so that we may consider the element $$h:=\delta(z)^{-1}g\delta(z)\in G_{\theta(z)}^*$$ and its image $$v:=\tau_{\theta(z)}(h)\in \overline{F}^{-1}(\overline{F}(z))$$ under Kostant's isomorphism $\tau_{\theta(z)}$ (see Theorem \ref{Theorem: Kostant fibre isomorphism}). Note that $v\in\mathcal{V}$, as $F(v)=F(z)=F(x)\in\mathcal{D}$. Note also that $$\lambda(v)=\tau_{\theta(z)}^{-1}(v)=\tau_{\theta(z)}^{-1}(\tau_{\theta(z)}(h))=h,$$ where the first equality comes from Lemma \ref{Lemma: Property of lambda}.

We claim that $\varphi(v)=(g,x)$. To this end, Lemma \ref{Lemma: Precisely conjugate} and the condition $v\in \overline{F}^{-1}(\overline{F}(z))$ imply that $v$ and $z$ are $G$-conjugate. It then follows from Lemmas \ref{Lemma: Conjugate} and \ref{Lemma: Uniquely conjugate} that $\theta(v)=\theta(z)$ and $\beta(v)=\beta(z)$, respectively. By Lemma \ref{Lemma: Uniqueness of delta}, we must have $\delta(v)=\delta(z)$. We now note that
\begin{align*}
\varphi(v) & = (\delta(v)\lambda(v)\delta(v)^{-1},\beta(v))\\
& = (\delta(z)\lambda(v)\delta(z)^{-1},\beta(z))\\
& = (\delta(z)\lambda(v)\delta(z)^{-1},x)\\
& = (\delta(z)h\delta(z)^{-1},x)\\
& = (\delta(z)\delta(z)^{-1}g\delta(z)\delta(z)^{-1},x)\\
& = (g,x).
\end{align*}
This establishes that $\mathcal{W}\subseteq \varphi(\mathcal{V})$, completing the proof. 
\end{proof}

\begin{proposition}
The map $\varphi$ is an open embedding of complex manifolds.
\end{proposition}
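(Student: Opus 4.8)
The plan is to combine Lemma~\ref{Lemma: Image} with an injectivity argument and a differential computation. By Lemma~\ref{Lemma: Image} the image of $\varphi$ is the open subset $\mathcal{W}\subseteq\mathcal{Z}_{\mathfrak{g}}$, so it suffices to prove that $\varphi$ is injective and that $d_x\varphi$ is an isomorphism for every $x\in\mathcal{V}$: a bijective local biholomorphism $\mathcal{V}\rightarrow\mathcal{W}$ is a biholomorphism, whence $\varphi$ is an open embedding of complex manifolds. Since $\dim_{\mathbb{C}}\mathcal{V}=\dim_{\mathbb{C}}\mathcal{Z}_{\mathfrak{g}}=2r$ (Proposition~\ref{Proposition: Symplectic subvariety}), it is enough to check that each $d_x\varphi$ is injective. (One could instead read off an explicit holomorphic inverse $\mathcal{W}\rightarrow\mathcal{V}$ from the proof of Lemma~\ref{Lemma: Image}; the route above avoids that bookkeeping.)

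For injectivity, suppose $x,x'\in\mathcal{V}$ satisfy $\varphi(x)=\varphi(x')$. Comparing the $\mathcal{S}$-components in \eqref{Equation: Definition of biholomorphism} gives $\beta(x)=\beta(x')$, so $x$ and $x'$ are each $G$-conjugate to this common point (Lemma~\ref{Lemma: Uniquely conjugate}) and hence to one another. Lemma~\ref{Lemma: Conjugate} then forces $\theta(x)=\theta(x')$, and since $\delta(x)$ is by Lemma~\ref{Lemma: Uniqueness of delta} the unique element of $U$ with $\beta(x)=\Adj_{\delta(x)}(\theta(x))$, it follows that $\delta(x)=\delta(x')$. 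Cancelling $\delta(x)=\delta(x')$ in the $G$-components of \eqref{Equation: Definition of biholomorphism} gives $\lambda(x)=\lambda(x')$. As $\lambda(x)=\tau_{\theta(x)}^{-1}(x)$ and $\lambda(x')=\tau_{\theta(x')}^{-1}(x')=\tau_{\theta(x)}^{-1}(x')$ (using $\theta(x)=\theta(x')$), and $\tau_{\theta(x)}$ is an isomorphism of varieties (Theorem~\ref{Theorem: Kostant fibre isomorphism}), we conclude $x=x'$.

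To see that $d_x\varphi$ is an isomorphism, the idea is to understand $\varphi$ fibrewise over $\overline{F}$. Fix $x\in\mathcal{V}$; since $\overline{F}(x)\in\mathcal{D}$, the whole fibre $\overline{F}^{-1}(\overline{F}(x))$ lies in $\mathcal{V}$ and consists of points $G$-conjugate to $x$ (Lemma~\ref{Lemma: Precisely conjugate}), so $\theta$, $\beta$, and $\delta$ are constant along it (Lemmas~\ref{Lemma: Conjugate}, \ref{Lemma: Uniquely conjugate}, \ref{Lemma: Uniqueness of delta}) while $\lambda$ restricts to $\tau_{\theta(x)}^{-1}$ (Lemma~\ref{Lemma: Property of lambda}). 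Hence on this fibre $\varphi$ is the composite of the isomorphism $\tau_{\theta(x)}^{-1}\colon\overline{F}^{-1}(\overline{F}(x))\xrightarrow{\ \cong\ }G_{\theta(x)}^*$ (Theorem~\ref{Theorem: Kostant fibre isomorphism}) with conjugation by $\delta(x)$, which carries $G_{\theta(x)}^*$ isomorphically onto the open subset $G_{\beta(x)}^*\subseteq G_{\beta(x)}$ (Lemma~\ref{Lemma: Open stabilizer}); since $\widetilde{F}^{-1}(\widetilde{F}(\varphi(x)))=G_{\beta(x)}\times\{\beta(x)\}$ by Proposition~\ref{Proposition: Moment map image}, $\varphi$ maps the fibre $\overline{F}^{-1}(\overline{F}(x))$ biholomorphically onto an open subset of the fibre $\widetilde{F}^{-1}(\overline{F}(x))$. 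In particular $d_x\varphi$ restricts to an isomorphism $\ker(d_x\overline{F})\rightarrow\ker(d_{\varphi(x)}\widetilde{F})$. If now $v\in\ker(d_x\varphi)$, then the commutative diagram of the preceding proposition gives $d_x\overline{F}(v)=d_{\varphi(x)}\widetilde{F}(d_x\varphi(v))=0$, so $v\in\ker(d_x\overline{F})$, and the injectivity just noted forces $v=0$. Thus $d_x\varphi$ is injective, hence an isomorphism by equality of dimensions, completing the argument. The main obstacle is the bookkeeping in this last paragraph — tracking how $\theta,\beta,\delta,\lambda$ behave along a level set of $\overline{F}$ and identifying the image of $\tau_{\theta(x)}$ inside the corresponding fibre of $\widetilde{F}$ — as injectivity and the passage from ``bijective local biholomorphism'' to ``open embedding'' are routine.
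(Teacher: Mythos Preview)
Your proof is correct, and your injectivity argument is word-for-word the paper's. The difference lies in what comes after: the paper stops once injectivity is established, tacitly invoking the classical fact that an injective holomorphic map between equidimensional complex manifolds is automatically a biholomorphism onto its image (the Jacobian of such a map cannot vanish). You instead verify the local-biholomorphism property directly, via the fibrewise description of $\varphi$: on each $\overline{F}$-level set the maps $\theta,\beta,\delta$ freeze and $\varphi$ reduces to $\tau_{\theta(x)}^{-1}$ followed by conjugation by $\delta(x)$, so $d_x\varphi$ is injective on $\ker(d_x\overline{F})$; the commutative triangle then forces any element of $\ker(d_x\varphi)$ into $\ker(d_x\overline{F})$, where it must die. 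Your route is more self-contained and, as a bonus, rehearses exactly the fibrewise constancy of $\theta,\beta,\delta$ and the identification $\lambda\big\vert_{\overline{F}^{-1}(\overline{F}(x))}=\tau_{\theta(x)}^{-1}$ that the paper uses again in the proof of Theorem~\ref{Theorem: Main theorem}(iii). The paper's route is shorter but leans on a nontrivial complex-analytic theorem that it does not cite.
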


\begin{proof}
As noted just before Lemma \ref{Lemma: Image}, $\mathcal{W}$ is an open subset of $\mathcal{Z}_{\mathfrak{g}}$. Lemma \ref{Lemma: Image} therefore reduces our task to one of proving that $\varphi$ is injective. To this end, suppose that $x,y\in\mathcal{V}$ have the same images under $\varphi$, i.e.
$$\beta(x)=\beta(y)\quad\text{and}\quad \delta(x)\lambda(x)\delta(x)^{-1}=\delta(y)\lambda(y)\delta(y)^{-1}.$$
By applying Lemma \ref{Lemma: Uniquely conjugate} to the first equation, we conclude that $x$ and $y$ are $G$-conjugate. Lemma \ref{Lemma: Conjugate} then gives $\theta(x)=\theta(y)$, which by Lemma \ref{Lemma: Uniqueness of delta} implies that $\delta(x)=\delta(y)$. Since $\delta(x)\lambda(x)\delta(x)^{-1}=\delta(y)\lambda(y)\delta(y)^{-1}$, we conclude that $\lambda(x)=\lambda(y)$. The definition of $\lambda$ (see \eqref{Equation: Definition of lambda}) and the fact that $\theta(x)=\theta(y)$ combine to imply that $\tau_{\theta(x)}^{-1}(x)=\tau_{\theta(x)}^{-1}(y)$. This establishes that $x=y$.
\end{proof}

It remains only to prove Theorem \ref{Theorem: Main theorem}(iii). To this end, recall our discussion of the vectors $(d_xf_i)^{\vee}\in\mathfrak{g}_x$ for $x\in\mathfrak{g}$ and $i\in\{1,\ldots,r\}$ (see \ref{Subsection: The integrable system}).

\begin{lemma}\label{Lemma: Differential identity}
If $g\in G$, $x\in\mathfrak{g}$, and $y:=\Adj_g(x)$, then $\Adj_g((d_xf_i)^{\vee}) = (d_yf_i)^{\vee}$ for all $i\in\{1,\ldots,r\}$.
\end{lemma}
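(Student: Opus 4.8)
The plan is to reduce everything to the defining property of the vectors $(d_xf_i)^\vee$, namely that they are characterized via the Killing form by the pairing $\langle (d_xf_i)^\vee, z\rangle = d_xf_i(z)$ for all $z\in\mathfrak{g}$. So I would start from the right side: $(d_yf_i)^\vee$ is the unique vector $w\in\mathfrak{g}$ such that $\langle w, z\rangle = d_yf_i(z)$ for all $z\in\mathfrak{g}$. To prove the claimed identity it therefore suffices to check that $\Adj_g((d_xf_i)^\vee)$ satisfies this same defining equation, i.e.\ that $\langle \Adj_g((d_xf_i)^\vee), z\rangle = d_yf_i(z)$ for every $z\in\mathfrak{g}$.

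First I would use $G$-invariance of the Killing form to rewrite the left side as $\langle (d_xf_i)^\vee, \Adj_{g^{-1}}(z)\rangle$, which by definition of $(d_xf_i)^\vee$ equals $d_xf_i(\Adj_{g^{-1}}(z))$. Second, I would use the $G$-invariance of $f_i$: since $f_i = f_i\circ\Adj_g$, the chain rule gives $d_yf_i\circ d_x(\Adj_g) = d_xf_i$, and because $\Adj_g$ is linear its differential at any point is just $\Adj_g$ itself, so $d_yf_i(\Adj_g(v)) = d_xf_i(v)$ for all $v\in\mathfrak{g}$. Applying this with $v = \Adj_{g^{-1}}(z)$ yields $d_xf_i(\Adj_{g^{-1}}(z)) = d_yf_i(z)$. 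Chaining these equalities gives exactly $\langle \Adj_g((d_xf_i)^\vee), z\rangle = d_yf_i(z)$, and then uniqueness of the Killing-form representative finishes the argument.

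I do not expect a serious obstacle here; the only point requiring mild care is bookkeeping the direction of the $\Adj$-twist (whether one needs $g$ or $g^{-1}$ in each step) and making explicit that the differential of the linear map $\Adj_g$ is $\Adj_g$ again. One could alternatively phrase the whole thing more invariantly by noting that the Killing isomorphism $\mathfrak{g}\to\mathfrak{g}^*$ is $G$-equivariant and that $d f_i$ is an equivariant section, so that $(df_i)^\vee$ is an equivariant vector field on $\mathfrak{g}_{\text{reg}}$ (or on all of $\mathfrak{g}$), which is just a restatement of the claim; but the direct computation above is shortest and self-contained.
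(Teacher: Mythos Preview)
Your proof is correct and is essentially the same as the paper's: both use $G$-invariance of the Killing form to move $\Adj_g$ to the other argument, then invoke $G$-invariance of $f_i$ (differentiated to get $d_xf_i\circ\Adj_{g^{-1}}=d_yf_i$) to finish. The only cosmetic difference is that the paper differentiates $f_i\circ\Adj_{g^{-1}}=f_i$ at $y$ rather than $f_i=f_i\circ\Adj_g$ at $x$, which amounts to the same identity.
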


\begin{proof}
Since $f_i$ is a $G$-invariant polynomial on $\mathfrak{g}$, we have $f_i\circ\Adj_{g^{-1}}=f_i$. Differentiating both sides at $y$ yields $d_xf_i\circ\Adj_{g^{-1}}=d_yf_i$. It follows that for all $z\in\mathfrak{g}$,
\begin{align*}
\langle\Adj_g((d_xf_i)^{\vee}),z\rangle & = \langle (d_xf_i)^{\vee},\Adj_{g^{-1}}(z)\rangle\\
& = (d_xf_i\circ\Adj_{g^{-1}})(z)\\
& = d_yf_i(z)\\
& = \langle (d_yf_i)^{\vee},z\rangle.
\end{align*}

We conclude that $\Adj_g((d_xf_i)^{\vee}) = (d_yf_i)^{\vee}$.
\end{proof}

Now observe that Theorem \ref{Theorem: Main theorem}(iii) amounts to the following proposition, in which $H(\overline{f}_i)$ (resp. $H(\widetilde{f}_i)$) denotes the Hamiltonian vector field of $\overline{f}_i$ on $\mathcal{O}_{\text{KT}}$ (resp. $\widetilde{f}_i$ on $\mathcal{Z}_{\mathfrak{g}}$). 

\begin{proposition}
We have $H(\widetilde{f}_i)_{\varphi(x)}=d_x\varphi(H(\overline{f}_i)_x)$ for all $i\in\{1,\ldots,r\}$ and $x\in\mathcal{V}$.
\end{proposition}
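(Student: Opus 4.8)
The plan is to unwind both Hamiltonian vector fields using the explicit descriptions already available, and then match them term by term. On the universal centralizer side, Proposition \ref{Proposition: Description of Hamiltonian vector fields} gives a closed formula: at the point $\varphi(x)=(\delta(x)\lambda(x)\delta(x)^{-1},\beta(x))\in\mathcal{Z}_{\mathfrak{g}}$, writing $g:=\delta(x)\lambda(x)\delta(x)^{-1}$ and recalling $\beta(x)$ is $G$-conjugate to $x$, we have $H(\widetilde{f}_i)_{\varphi(x)}=(d_eL_{g}((d_{\beta(x)}f_i)^{\vee}),0)$. On the Kostant--Toda side, Theorem \ref{Theorem: Kostant Hamiltonian} describes $H(\overline{f}_i)$ via Kostant's isomorphism $\tau_{\theta(x)}$, and by definition $\lambda(x)=\tau_{\theta(x)}^{-1}(x)$. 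So the computation of $d_x\varphi(H(\overline{f}_i)_x)$ should be organized around the factorization of $\varphi$ through the maps $\theta$, $\beta$, $\delta$, $\lambda$, and the various Kostant isomorphisms.

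The key steps, in order. First, I would express $\varphi$ more transparently: since $x$, $\theta(x)$, $\beta(x)$ are mutually $G$-conjugate, and $\tau_{\theta(x)}$ conjugates $G_{\theta(x)}^*$-elements by their $\sigma_U$-component, I want to track how the first and second coordinates of $\varphi$ vary as $x$ moves within its level set. Here is the crucial simplification: by Theorem \ref{Theorem: Main theorem}(i)--(ii) (already proved above), $\overline{F}$ is constant on the flow line of $H(\overline{f}_i)$, so both $\theta$ and $\beta$ are \emph{constant} along that flow line (they depend only on $\overline{F}(x)$, by Lemmas \ref{Lemma: Conjugate} and \ref{Lemma: Uniquely conjugate}), and hence so is $\delta$ by Lemma \ref{Lemma: Uniqueness of delta}. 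Therefore, restricting $\varphi$ to the level set $\overline{F}^{-1}(\overline{F}(x))$, the maps $\theta,\beta,\delta$ are all constant and $\varphi$ becomes $y\mapsto(\delta(x)\lambda(y)\delta(x)^{-1},\beta(x))$ with only $\lambda$ varying; and $\lambda\big\vert_{\overline{F}^{-1}(\overline{F}(x))}=\tau_{\theta(x)}^{-1}$ by Lemma \ref{Lemma: Property of lambda}. Since $H(\overline{f}_i)_x$ is tangent to the level set, I only need the differential of this restricted $\varphi$.

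Second, I would compute $d_x\lambda(H(\overline{f}_i)_x)$. Writing $g_0:=\lambda(x)=\tau_{\theta(x)}^{-1}(x)\in G_{\theta(x)}^*$ and using that $\tau_{\theta(x)}$ is a variety isomorphism onto the level set, Theorem \ref{Theorem: Kostant Hamiltonian} inverts to give $d_x\lambda(H(\overline{f}_i)_x)=(d_{g_0}\tau_{\theta(x)})^{-1}(H(\overline{f}_i)_x)=d_eL_{g_0}((d_{\theta(x)}f_i)^{\vee})\in T_{g_0}G_{\theta(x)}^*$. Third, I apply the constant conjugation by $\delta(x)$: the differential of $h\mapsto\delta(x)h\delta(x)^{-1}$ at $g_0$ sends $d_eL_{g_0}(w)$ to $d_eL_{\delta(x)g_0\delta(x)^{-1}}(\Adj_{\delta(x)}(w))$, which is a routine left-translation identity. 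With $w=(d_{\theta(x)}f_i)^{\vee}$ and using Lemma \ref{Lemma: Differential identity} (applied to the conjugation $\beta(x)=\Adj_{\delta(x)}(\theta(x))$ from Lemma \ref{Lemma: Uniqueness of delta}), we get $\Adj_{\delta(x)}((d_{\theta(x)}f_i)^{\vee})=(d_{\beta(x)}f_i)^{\vee}$. Hence $d_x\varphi(H(\overline{f}_i)_x)=(d_eL_{g}((d_{\beta(x)}f_i)^{\vee}),0)$ with $g=\delta(x)\lambda(x)\delta(x)^{-1}$, which is exactly $H(\widetilde{f}_i)_{\varphi(x)}$ by Proposition \ref{Proposition: Description of Hamiltonian vector fields}.

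The main obstacle I anticipate is bookkeeping rather than conceptual: carefully justifying that $\theta$, $\beta$, $\delta$ are constant along level sets so that their differentials drop out of $d_x\varphi$ when evaluated on $H(\overline{f}_i)_x$ (this uses that $H(\overline{f}_i)$ is tangent to the fibres of $\overline{F}$, a consequence of complete integrability), and correctly handling the two different ambient spaces in which tangent vectors live --- $T_{g_0}G_{\theta(x)}^*\subseteq T_{g_0}G$ versus $T_g\mathcal{Z}_{\mathfrak{g}}\subseteq T_gG\oplus\mathfrak{g}$ --- so that the ``$0$'' in the $\mathfrak{g}$-component is accounted for (it appears because $\beta(x)$ is locally constant). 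A secondary point requiring care is that $d_{g_0}\tau_{\theta(x)}$ maps into $T(\overline{F}^{-1}(\overline{F}(x)))$, so its inverse is applied to $H(\overline{f}_i)_x$ legitimately only because that vector is tangent to the level set; this is where Theorem \ref{Theorem: Kostant Hamiltonian} and the integrable-system property are both used.
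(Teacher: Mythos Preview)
Your proposal is correct and follows essentially the same route as the paper: both arguments observe that $\theta$, $\beta$, $\delta$ are constant along level sets of $\overline{F}$ so that only $\lambda$ contributes to $d_x\varphi(H(\overline{f}_i)_x)$, invoke Theorem~\ref{Theorem: Kostant Hamiltonian} to identify $d_x\lambda(H(\overline{f}_i)_x)=d_eL_{\lambda(x)}((d_{\theta(x)}f_i)^{\vee})$, push through the conjugation by $\delta(x)$ via the identity $d_eL_{g_0}(w)\mapsto d_eL_{\delta(x)g_0\delta(x)^{-1}}(\Adj_{\delta(x)}w)$, and finish with Lemma~\ref{Lemma: Differential identity} and Proposition~\ref{Proposition: Description of Hamiltonian vector fields}. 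The paper records the conjugation step as the differentiated identity $L_h\circ C_{\delta(x)}=C_{\delta(x)}\circ L_{\lambda(x)}$, but this is exactly your ``routine left-translation identity.''
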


\begin{proof}
Since $\overline{F}=(\overline{f}_1,\ldots,\overline{f}_r):\mathcal{O}_{\text{KT}}\rightarrow\mathbb{C}^r$ is a completely integrable system, $H(\overline{f}_i)$ is tangent to the level sets of $\overline{F}$ for each $i\in\{1,\ldots,r\}$. An application of Lemma \ref{Lemma: Precisely conjugate} then reveals the following: if two points in $\mathcal{O}_{\text{KT}}$ lie on the same integral curve of $H(\overline{f}_i)$, then these points are $G$-conjugate to one another. By Lemmas \ref{Lemma: Conjugate} and \ref{Lemma: Uniquely conjugate}, $\theta$ and $\beta$ must be constant-valued along each integral curve of $H(\overline{f}_i)$ in $\mathcal{V}$. Lemma \ref{Lemma: Uniqueness of delta} then implies that $\delta$ is also constant-valued along each integral curve of $H(\overline{f}_i)$ in $\mathcal{V}$. Together with the definition \eqref{Equation: Definition of biholomorphism} of $\varphi$, the previous two sentences entail the following identity for all $x\in\mathcal{V}$:
\begin{equation}\label{Equation: Differential calculation} d_x\varphi(H(\overline{f}_i)_x)=\bigg(d_{\lambda(x)}C_{\delta(x)}\bigg(d_x\lambda(H(\overline{f}_i)_x)\bigg),0\bigg)\in T_{\varphi(x)}\mathcal{Z}_{\mathfrak{g}},\end{equation}
where $C_{\delta(x)}:G\rightarrow G$ is conjugation by $\delta(x)$. Note that $$T_{\varphi(x)}\mathcal{Z}_{\mathfrak{g}}\subseteq T_{(h,\beta(x))}(G\times\mathfrak{g})=T_hG\oplus\mathfrak{g}$$ with $h:=\delta(x)\lambda(x)\delta(x)^{-1}$, and that the right hand side of \eqref{Equation: Differential calculation} should be interpreted as an element of $T_hG\oplus\mathfrak{g}$. 

Now set $g:=\lambda(x)$ in \eqref{Equation: Relation between vector fields}, noting that $\tau_{\theta(x)}(g)=x$ must also hold (see \eqref{Equation: Definition of lambda}. We obtain $$d_{\lambda(x)}\tau_{\theta(x)}(d_eL_{\lambda(x)}((d_{\theta(x)}f_i)^{\vee}))=H(\overline{f}_i)_x,$$ or equivalently
\begin{equation}\label{Equation: Equivalent statement}(d_{\lambda(x)}\tau_{\theta(x)})^{-1}(H(\overline{f}_i)_x)=d_eL_{\lambda(x)}(d_{\theta(x)}f_i)^{\vee}).\end{equation} At the same time, Lemma  \ref{Lemma: Property of lambda} and $H(\overline{f}_i)$ being tangent to $\overline{F}^{-1}(\overline{F}(x))$ imply that $\lambda$ and $\tau_{\theta(x)}^{-1}$ agree on any integral curve through of $H(\overline{f}_i)$ through $x$. We conclude that 
$$d_x\lambda(H(\overline{f}_i)_x)=d_x(\tau_{\theta(x)}^{-1})(H(\overline{f}_i)_x)=(d_{\lambda(x)}\tau_{\theta(x)})^{-1}(H(\overline{f}_i)_x),$$
so that \eqref{Equation: Equivalent statement} may be written as
$$d_x\lambda(H(\overline{f}_i)_x)=d_eL_{\lambda(x)}((d_{\theta(x)}f_i)^{\vee}).$$ After substituting the right hand side into \eqref{Equation: Differential calculation}, we find that
\begin{equation}\label{Equation: Second differential calculation} d_x\varphi(H(\overline{f}_i)_x)=\bigg(d_{\lambda(x)}C_{\delta(x)}\bigg(d_eL_{\lambda(x)}((d_{\theta(x)}f_i)^{\vee})\bigg),0\bigg).\end{equation} 

We next observe that $h:=\delta(x)\lambda(x)\delta(x)^{-1}$ satisfies $L_h\circ C_{\delta(x)}=C_{\delta(x)}\circ L_{\lambda(x)}$, which becomes
$$d_eL_h\circ\Adj_{\delta(x)}=d_{\lambda(x)}C_{\delta(x)}\circ d_eL_{\lambda(x)}$$ upon the differentiation of both sides at $e\in G$. Modifying \eqref{Equation: Second differential calculation} accordingly, we obtain
$$d_x\varphi(H(\overline{f}_i)_x)=\bigg(d_eL_h\bigg(\Adj_{\delta(x)}((d_{\theta(x)}f_i)^{\vee})\bigg),0\bigg).$$ Now recall that $\Adj_{\delta(x)}(\theta(x))=\beta(x)$ (see Lemma \ref{Lemma: Uniqueness of delta}), so that Lemma \ref{Lemma: Differential identity} yields $\Adj_{\delta(x)}((d_{\theta(x)}f_i)^{\vee})=(d_{\beta(x)}f_i)^{\vee}$. It follows that $$d_x\varphi(H(\overline{f}_i)_x)=\bigg(d_eL_h((d_{\beta(x)}f_i)^{\vee})),0\bigg).$$ By Proposition \ref{Proposition: Description of Hamiltonian vector fields}, the right hand side is precisely the result of evaluating $H(\widetilde{f}_i)$ at $\varphi(x)=(h,\beta(x))$. This completes the proof. 
\end{proof}

\bibliographystyle{acm} 
\bibliography{Centralizer}
\end{document}